\newtheorem{theorem}{Theorem}[section]
\newtheorem{lemma}[theorem]{Lemma}
\newtheorem{proposition}[theorem]{Proposition}
\newtheorem{remark}[theorem]{Remark}
\newtheorem{question}[theorem]{Question}
\theoremstyle{definition}
\newtheorem{definition}[theorem]{Definition}
\newcommand{\interior}[1]{{\rm int}(#1)}
\newcommand{\Mod}{\rm \mathcal M}
\newcommand{\MCG}{{\rm Mod}}
\newcommand{\R}{\mathbb{R}}
\newcommand{\N}{\mathbb{N}}
\newcommand{\matCP}{\mathbb{CP}}
\newcommand{\matH}{\mathbb{H}}
\newcommand{\Mno}{\mathcal M^{\rm no}}
\newcommand{\Mnon}{\mathcal M^{\rm no}}
\newcommand{\calM}{\mathcal M}
\newcommand{\Snon}{\mathcal S^{\rm no}}
\newcommand{\Hnon}{\mathcal H^{\rm no}}
\newcommand{\Hno}{\mathcal H^{\rm no}}
\newcommand{\calH}{\mathcal H}
\newcommand{\matR}{\mathbb{R}}
\newcommand{\matN}{\mathbb{N}}
\newcommand{\matZ}{\mathbb{Z}}
\newcommand{\Spy}{{\rm S}}
\numberwithin{equation}{section}
\def\H{{\mathcal H}}
\def\Gi{{\Gamma_\infty}}
\def\calS{\mathcal{S}}
\author{Bruno Martelli}
\address{Dipartimento di Matematica ``Tonelli'', Largo Pontecorvo 5, 56127 Pisa, Italy}
\email{martelli at dm dot unipi dot it}
\author
{Matteo Novaga}
\address{Dipartimento di Matematica ``Tonelli'', Largo Pontecorvo 5, 56127 Pisa, Italy}
\email{novaga at dm dot unipi dot it}
\author{Alessandra Pluda}
\address{Dipartimento di Matematica ``Tonelli'', Largo Pontecorvo 5, 56127 Pisa, Italy}
\email{pluda at mail dot dm dot unipi dot it}
\author{Stefano Riolo}
\address{Dipartimento di Matematica ``Tonelli'', Largo Pontecorvo 5, 56127 Pisa, Italy}
\email{riolo at mail dot dm dot unipi dot it}
\title{Spines of minimal length}
\keywords{spine; length functional; constant curvature surfaces}
\subjclass[2010]{57M50; 49Q20; 53A10}
\begin{document}

\begin{abstract}
In this paper we raise the question whether every closed Riemannian manifold has a spine of minimal area, and we 
answer it affirmatively in the surface case.  
On constant curvature surfaces we introduce the \emph{spine systole}, a continuous real function on moduli space that measures the minimal length of a spine in each surface. We show that the spine systole is a proper function and has its global minima precisely on the extremal surfaces (those containing the biggest possible discs). 

We also study \emph{minimal spines}, which are critical points for the length functional. We completely classify minimal spines on flat tori, proving that the number of them is a proper function on moduli space. We also show that the number of minimal spines of uniformly bounded length is finite on hyperbolic surfaces.
\end{abstract}

\maketitle
%\tableofcontents
\section{Introduction}
In this paper we study the spines of closed Riemannian manifolds that have minimal area with respect to the codimension-one Hausdorff measure. We describe the general setting, we prove that such spines exist on surfaces, and then we study the constant curvature case in detail.

\smallskip

%\subsection*{Spines}
In differential topology, a \emph{spine} of a closed smooth $n$-manifold $M$ is a smooth finite simplicial complex $P\subset M$ such that $M$ minus a small open ball collapses onto $P$. In particular $M\setminus P$ is an open ball. 

In all cases, we suppose that $\dim P < \dim M$, and this is the only restriction we make on dimensions: for instance, any point is a spine of the sphere $S^n$ for all $n\geq 1$. The polyhedron $P$ may also have strata of varying dimensions: for instance a natural spine of $S^2\times S^1$ is the union of the sphere $S^2 \times q$ and the circle $p \times S^1$.

A compact manifold $M$ has many different spines: as an example, one may give $M$ a Riemannian structure and construct $P$ as the cut locus of a point \cite{B}. The manifold $M$ has typically infinitely many pairwise non-homeomorphic spines, with portions of varying dimension. 

The notion of spine is widely employed in topology: for instance, it may be used to define a \emph{complexity} on manifolds \cite{Matveev, Matveev-book, M}, to study group actions \cite{A} and properties of Riemannian manifolds \cite{AB}. In dimensions 2 and 3 spines (with generic singularities) arise naturally and frequently as the dual of 1-vertex triangulations. Topologists usually consider spines only up to isotopy, and relate different spines (or triangulations) via some moves like ``flips'' on surfaces (see for instance \cite{AKP, FST, KP, STT}) and Matveev-Piergallini moves \cite{Matveev-moves, Piergallini} on 3-manifolds.

However, it seems that spines have not been much studied from a geometric measure theory point of view, and this is the main purpose of this paper.

\smallskip

%\subsection*{Area}
If $M$ is a closed Riemannian manifold of dimension $n\geq 2$, every spine $P\subset M$ has a well-defined finite $(n-1)$-dimensional Hausdorff measure  $\H^{n-1}(P)$ called \emph{area}. For instance, a point in $S^n$ has zero area; the spine of $S^2\times S^1$ described above has the same area $4\pi$ of $S^2$. 

We are interested here in the following problem. 

\begin{question} \label{main:question}
Does every closed Riemannian manifold $M$ of dimension $n\geq 2$ have a spine of minimal area?
\end{question}

As an example, the answer is \emph{yes} for all spheres $S^n$ equipped with any Riemannian metric, since $S^n$ has a spine of zero area (a point). The reader may notice here that we need to allow smaller-dimensional spines like points to get a positive answer to Question \ref{main:question} on spheres (and also on other manifolds, see Section \ref{spines:subsection}).

This paper is essentially devoted to surfaces: their spines have dimension $\leq 1$ and it is of course more reasonable to employ the word \emph{length} to indicate their area. 

%By applying some basic techniques of geometric measure theory and of the topology of surfaces, we get quite easily the following. 

\begin{theorem}\label{existencein2d}
Every closed Riemannian surface $S$ has a spine $\Gamma$ of minimal length.
The spine $\Gamma$ is:
\begin{enumerate}
\item a point if $S$ is diffeomorphic to a sphere,
\item a closed geodesic if $S$ is diffeomorphic to a projective plane,
\item finitely many geodesic arcs meeting at trivalent points with angle $\frac 23 \pi$ otherwise.
\end{enumerate}
\end{theorem}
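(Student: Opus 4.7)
The plan is to apply the direct method of the calculus of variations in each of the three cases. Case (1) is immediate: any point of $S$ is a spine of length zero. For case (2), an Euler-characteristic computation yields $\chi(\Gamma)=\chi(S)-1=0$ for any spine $\Gamma$, and by deleting leaves and absorbing valence-$2$ vertices (two reductions that preserve the spine property and do not increase length, as I will verify) one reduces $\Gamma$ to an embedded essential simple closed curve. I then minimize length in the corresponding isotopy class by a standard curve-shortening argument: Arzel\`a--Ascoli for Lipschitz reparametrizations, combined with lower semicontinuity of length and a local surgery excluding self-intersections, yields an embedded closed geodesic.

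In case (3), where $\chi(S)\leq 0$, the same reduction first brings us to \emph{reduced} spines, that is, spines with no vertex of valence $\leq 2$. A reduced spine satisfies $V-E=\chi(S)-1$ and $2E\geq 3V$, so the numbers $V$ and $E$ of vertices and edges are bounded a priori; in particular reduced spines belong to finitely many combinatorial types. I pick a minimizing sequence of reduced spines and, after extracting a subsequence, assume they all share the same abstract graph $G$. Parametrizing each by a Lipschitz map $f_n\colon G\to S$ via arc length and applying Arzel\`a--Ascoli together with compactness of $S$ produces a uniform limit $f_\infty$, and lower semicontinuity of length then shows that $f_\infty$ attains the infimum.

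The main obstacle is to verify that the image $f_\infty(G)$ is still a spine. Two pathologies can a priori appear: some edges of $f_\infty$ may collapse to a point, or distinct edges may overlap along a common subarc. A collapsed edge yields a reduced spine of strictly smaller combinatorial complexity, and I restart the argument with this new type; the process terminates because $V+E$ is a bounded nonnegative integer. An overlap is excluded by observing that the complement of $f_\infty(G)$ would then fail to be a disc, and by exhibiting a local surgery that strictly decreases length, contradicting minimality.

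The regularity statement finally follows from first-variation arguments. Each edge, being length-minimizing between its endpoints, is a geodesic arc; at each vertex, the vanishing of the first variation of length forces the unit tangent vectors of the emanating edges to sum to zero. For a vertex of valence $k\geq 4$, a classical Steiner-type replacement splits it into two trivalent vertices joined by a short edge, strictly decreasing the total length, so every vertex must be trivalent. The only configuration of three unit vectors summing to zero is the one where they make pairwise angles equal to $\frac{2}{3}\pi$.
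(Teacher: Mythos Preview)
Your strategy is legitimate and genuinely different from the paper's. The paper works set-theoretically: it takes a minimizing sequence of spines as closed subsets of $S$, extracts a Hausdorff limit $\Gamma_\infty$ via Blaschke's theorem, invokes Golab's theorem for lower semicontinuity of $\mathcal{H}^1$, and then appeals to the Allard--Almgren structure theorem for stationary one-dimensional varifolds to obtain the geodesic network structure. To check that $\Gamma_\infty$ is a spine, the paper observes that $\Gamma_\infty$ meets every homotopically nontrivial closed curve (this property passes to Hausdorff limits) and runs a Zorn's-lemma argument: any inclusion-minimal closed subset with that property has disc complement, hence is a spine, and by minimality of the length equals $\Gamma_\infty$. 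Your parametric approach via Arzel\`a--Ascoli on maps from a fixed graph $G$ avoids both Golab's theorem and varifold regularity, and your direct first-variation treatment of the vertices is more elementary than invoking Allard--Almgren; the a~priori bound on $(V,E)$ from $V-E=\chi(S)-1$ and $2E\ge 3V$ is a nice device that has no counterpart in the paper.

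There is, however, a real gap in your verification that $f_\infty(G)$ is a spine. You name only two pathologies (edge collapse and edges sharing a subarc), and the arguments sketched for them are incomplete. The sentence that an overlap is ``excluded by observing that the complement of $f_\infty(G)$ would then fail to be a disc'' is circular: you have not yet shown the complement is a disc, so this observation excludes nothing. The surgery you invoke must produce another \emph{spine} of strictly smaller length, and you do not explain why the surgered object still has a single disc as complement. Even in the good case where no edge collapses and no overlap occurs, you still owe an argument that $f_\infty$ is an embedding and that $S\setminus f_\infty(G)$ is a disc; the natural tool is that $C^0$-close embeddings of a compact graph into a surface are ambient isotopic, so for large $n$ the complement of $f_\infty(G)$ is homeomorphic to that of $f_n(G)$. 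Some statement of this kind is what closes your argument, and it plays the same role that the paper's Zorn's-lemma step plays in its setting.
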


%Mettere qui brevemente citazione alla dimensione 3 scritta dallo studente di Freedman

The theorem says everything about spheres, so we restrict our attention to the other surfaces $S$. 
In analogy with minimal surfaces, we say that a spine $\Gamma \subset S$ is \emph{minimal} 
if it is as prescribed in points (2) or (3) of the theorem: either a closed geodesic, or
finitely many geodesic arcs meeting with angle $\frac 23 \pi$ at trivalent points. 
These conditions are similar to that of having zero mean curvature for hypersurfaces. 
A spine of minimal length is minimal, but the converse may not hold: a minimal spine is a critical point of the length functional, 
while a spine of minimal length is a global minimum.

\smallskip

%\subsection*{Constant curvature surfaces}
It is now natural to study these geometric objects on closed surfaces of constant curvature. Recall that all constant curvature metrics on a closed oriented surface $S$, considered up to orientation-preserving isometries and global rescalings, form the \emph{moduli space} $\Mod(S)$ of $S$. The moduli space is not compact: on the torus $T$, the space $\Mod(T)$ is the $(2,3,\infty)$ orbifold and is homeomorphic to a plane. 

We completely classify all minimal spines in flat tori. 

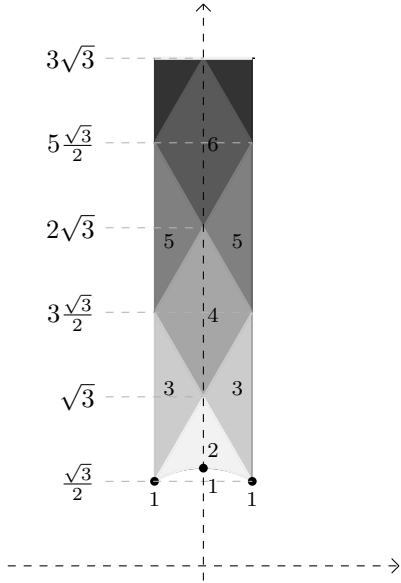
\begin{figure}
\begin{center}
\begin{tikzpicture}[scale=0.65]
%m2colorato
\filldraw[fill=black!5!white]
(0,3.46) 
to[out= -120,in=60, looseness=1] (-1,1.73) -- (-1,1.73)
to[out=29, in=151, looseness=1] (1, 1.73) --
(1,1.73)to[out= 120,in=-60, looseness=1] (0,3.46); 
%3colorato
\filldraw[fill=black!20!white]
(0,3.46) 
to[out= -120,in=60, looseness=1] (-1,1.73) -- (-1,1.73)
to[out=90, in=-90, looseness=1] (-1,5.19) --
(-1,5.19) to[out= -60,in=120, looseness=1] (0,3.46); 
\filldraw[fill=black!20!white]
(0,3.46) 
to[out= -60,in=120, looseness=1] (1,1.73) -- (1,1.73)
to[out=90, in=-90, looseness=1] (1,5.19) --
(1,5.19) to[out= -120,in=60, looseness=1] (0,3.46); 
%m4colorato
\filldraw[fill=black!35!white]
(0,6.92) 
to[out= -120,in=60, looseness=1] (-1,5.19) --
(-1,5.19)to[out= -60,in=120, looseness=1] (0,3.46)--
(0,3.46)to[out= 60,in=-120, looseness=1] (1,5.19)--
(1,5.19)to[out= 120,in=-60, looseness=1] (0,6.92); 
%5colorato
\filldraw[fill=black!50!white,shift={(0,3.46)}]
(0,3.46) 
to[out= -120,in=60, looseness=1] (-1,1.73) -- (-1,1.73)
to[out=90, in=-90, looseness=1] (-1,5.19) --
(-1,5.19) to[out= -60,in=120, looseness=1] (0,3.46); 
\filldraw[fill=black!50!white,shift={(0,3.46)}]
(0,3.46) 
to[out= -60,in=120, looseness=1] (1,1.73) -- (1,1.73)
to[out=90, in=-90, looseness=1] (1,5.19) --
(1,5.19) to[out= -120,in=60, looseness=1] (0,3.46); 
%m6colorato
\filldraw[fill=black!65!white]
(0,10.39) 
to[out= -120,in=60, looseness=1] (-1,8.66) --
(-1,8.66)to[out= -60,in=120, looseness=1] (0,6.92)--
(0,6.92)to[out= 60,in=-120, looseness=1] (1,8.66)--
(1,8.66)to[out= 120,in=-60, looseness=1] (0,10.39);
%7colorato
\filldraw[fill=black!80!white]
(0,10.39) 
to[out= -120,in=60, looseness=1] (-1,8.66) -- (-1,8.66)
to[out=90, in=-90, looseness=1] (-1,10.39) --
(-1,10.39) to[out= 0,in=180, looseness=1] (0,10.39); 
\filldraw[fill=black!80!white]
(0,10.39) 
to[out= -60,in=120, looseness=1] (1,8.66) -- (1,8.66)
to[out=90, in=-90, looseness=1] (1,10.39) --
(1,10.39) to[out= 0,in=180, looseness=1] (0,10.39);  
%spazio dei moduli
\draw[scale=2,domain=-0.52: 0.52,
smooth,variable=\t,shift={(0,0)},rotate=0]plot({1*sin(\t r)},
{1*cos(\t r)}) ;
%2colorato
\draw[color=black!5!white,thick]
(-1,1.73)to[out=29, in=151, looseness=1] (1, 1.73)
(-1,1.73)to[out=60, in=-120, looseness=1](0,3.46)
(1,1.73)to[out=120, in=-60, looseness=1](0,3.46); 
%3colorato
\draw[color=black!20!white,thick]
(0,3.46)to[out=60, in=-120, looseness=1](1,5.19)
(0,3.46)to[out=120, in=-60, looseness=1](-1,5.19); 
%3colorato 
\draw[color=black!20!white]
(1,1.73) to[out=90, in=-90, looseness=1](1,5.19)
(-1,1.73) to[out=90, in=-90, looseness=1](-1,5.19);
%4colorato
\draw[color=black!35!white,thick]
(-1,5.19)to[out=60, in=-120, looseness=1](0,6.92)
(1,5.19)to[out=120, in=-60, looseness=1](0,6.92);
%5colorato
\draw[color=black!50!white]
(1,5.19) to[out=90, in=-90, looseness=1](1,8.66)
(-1,5.19) to[out=90, in=-90, looseness=1](-1,8.66);
%5colorato
\draw[color=black!50!white,thick]
(0,6.92)to[out=60, in=-120, looseness=1](1,8.66)
(0,6.92)to[out=120, in=-60, looseness=1](-1,8.66);
%6colorato
\draw[color=black!65!white,thick]
(-1,8.66)to[out=60, in=-120, looseness=1](0,10.39)
(1,8.66)to[out=120, in=-60, looseness=1](0,10.39); 
%7colorato
\draw[color=black!80!white]
(1,8.66) to[out=90, in=-90, looseness=1](1,10.39)
(-1,8.66) to[out=90, in=-90, looseness=1](-1,10.39);
\draw[color=black!5!white, very thick]
(-1,10.39) to[out=0, in=180, looseness=1](1,10.39);

\path
(-1,1.73) node[circle,fill=black,scale=0.3] {}
(0,2) node[circle,fill=black,scale=0.3] {}
(1,1.73) node[circle,fill=black,scale=0.3] {};

\draw[color=black!30!white,dashed]
(-2,10.39)to[out=0, in=180, looseness=1](0,10.39)
(-2,8.66)to[out=0, in=180, looseness=1](1,8.66)
(-2,6.92)to[out=0, in=180, looseness=1](0,6.92)
(-2,5.19)to[out=0, in=180, looseness=1](1,5.19)
(-2,3.46)to[out=0, in=180, looseness=1](0,3.46)
(-2,1.73)to[out=0, in=180, looseness=1](1,1.73);
\path[font=\footnotesize]
(-2,10.39)node[left]{$3\sqrt{3}$}
(-2,8.66)node[left]{$5\frac{\sqrt{3}}{2}$}
(-2,6.92)node[left]{$2\sqrt{3}$}
(-2,5.19)node[left]{$3\frac{\sqrt{3}}{2}$}
(-2,3.46)node[left]{$\sqrt{3}$}
(-2,1.73)node[left]{$\frac{\sqrt{3}}{2}$};
\path[font=\tiny,color=black]
(1,1.73)node[below]{$1$}
(0.2,2)node[below]{$1$}
(-1,1.73)node[below]{$1$}
(0.2,2.73)node[below]{$2$}
(-0.7,4)node[below]{$3$}
(0.7,4)node[below]{$3$}
(0.2,5.5)node[below]{$4$}
(-0.7,7)node[below]{$5$}
(0.7,7)node[below]{$5$}
(0.2,9)node[below]{$6$};
%frecce assi
\draw
(4,0) to[out= 135,in=-45, looseness=1] (3.85,0.15)
(4,0) to[out= -135,in=45, looseness=1] (3.85,-0.15)
(0,11.5) to[out= -45,in=135, looseness=1] (0.15,11.35)
(0,11.5) to[out= 45,in=-135, looseness=1] (-0.15,11.35);
%assi cartesiani
\draw[dashed]
(0,-0.3) to[out= 90,in=-90, looseness=1] (0,11.5)
(-4,0) to[out= 0,in=-180, looseness=1] (4,0);
\end{tikzpicture}

\end{center}
\caption{The number of minimal spines on each oriented flat torus in moduli space $\calM(T)$, considered up to orientation-preserving isometries of $T$. The moduli space $\calM(T)$ is drawn via the usual fundamental domain in $\matH^2$. At each point $z\in\calM(T)$, the number $c(z)$ is the smallest among all numbers written on the adjacent strata (the function $c$ is lower semi-continuous).}
\label{c_intro:fig}
\end{figure}

\begin{theorem} \label{finite:spines:theorem}
Every oriented flat torus $T$ contains finitely many minimal spines up to orientation-preserving isometries. The number of such minimal spines is the proper function $c\colon \Mod(T) \to \N$ shown in Fig.~\ref{c_intro:fig}.
\end{theorem}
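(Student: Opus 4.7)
First I would pin down the combinatorial type of any minimal spine $\Gamma\subset T=\R^2/\Lambda$. By Theorem~\ref{existencein2d}(3), $\Gamma$ is trivalent with geodesic edges meeting at angle $\frac{2\pi}{3}$. Combining $\chi(T)=V-E+F=0$ with $2E=3V$ and $F=1$ gives $V=2$, $E=3$, and a unique face whose boundary walk has length $2E=6$. Lifting $\Gamma$ to $\widetilde\Gamma\subset\R^2$ thus produces a $\Lambda$-invariant tiling of the plane by Euclidean hexagons, each a fundamental domain for $\Lambda$, with all six interior angles equal to $\frac{2\pi}{3}$. Since adjacent hexagons are $\Lambda$-translates of a single reference hexagon $H$, opposite edges of $H$ are both parallel \emph{and} of equal length.

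Next I would parametrize such hexagons. An $H$ of this kind is determined, up to translation, by an orientation angle $\theta$ and three positive lengths $\ell_1,\ell_2,\ell_3$: its sides are the vectors $\pm e_i$ with $e_i=\ell_i\bigl(\cos(\theta+(i-1)\pi/3),\,\sin(\theta+(i-1)\pi/3)\bigr)$. A direct computation shows that the translation lattice of the resulting tiling is generated by $e_1+e_2$ and $e_2+e_3$, giving a classifying map
\[
\Psi\colon\{(\ell_1,\ell_2,\ell_3,\theta):\ell_i>0\}\longrightarrow\{\text{lattices in }\R^2\},\qquad(\ell_1,\ell_2,\ell_3,\theta)\longmapsto\matZ\langle e_1+e_2,\,e_2+e_3\rangle.
\]
The set of minimal spines of $T$ is then in bijection with $\Psi^{-1}(\Lambda)$ modulo the orientation-preserving isometry group of $T$ (translations together with the finite rotational stabilizer of $\Lambda$).

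The fibre $\Psi^{-1}(\Lambda)$ is finite: a pair $(v_1,v_2)\in\Lambda\times\Lambda$ arises as $(e_1+e_2,\,e_2+e_3)$ for some admissible quadruple iff the resulting linear system recovers a positive triple $(\ell_1,\ell_2,\ell_3)$ for the uniquely determined $\theta$, and the covolume constraint $|v_1\wedge v_2|=\mathrm{area}(T)$ combined with the positivity conditions bounds $|v_i|$, leaving only finitely many candidates. The exact values of $c$ in Figure~\ref{c_intro:fig} then follow from a case analysis over $\Mod(T)=\matH^2/\mathrm{PSL}(2,\matZ)$: the function $c$ is locally constant off a codimension-$1$ wall structure, jumping across walls where either some $\ell_i$ vanishes (and the corresponding hexagon degenerates to a parallelogram) or an extra isometry of $\Lambda$ identifies two previously distinct hexagons. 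Lower semi-continuity of $c$ records the disappearance of borderline spines on the walls themselves.

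For properness, the idea is to exhibit arbitrarily many inequivalent hexagons inside a sufficiently degenerate torus. For $\Lambda=\matZ\langle(1,0),(0,h)\rangle$ with $h\to\infty$, choosing $(v_1,v_2)=((1,0),(k,h))$ for distinct integers $k$ in a suitable range should produce pairwise non-isometric hexagonal fundamental domains, forcing $c(\Lambda)\to\infty$ at the cusp of $\Mod(T)$. The hard part will be the exhaustive walk over the strata of $\Mod(T)$ required to reproduce Figure~\ref{c_intro:fig} exactly: one must check positivity of $(\ell_1,\ell_2,\ell_3)$ in every candidate pair $(v_1,v_2)$, identify orbits under the full rotational stabilizer of $\Lambda$, and correctly treat the coincidences occurring at the special orbifold points $i$ and $e^{i\pi/3}$.
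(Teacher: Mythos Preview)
Your setup is essentially the paper's: the spine--hexagon correspondence (minimal spine $\leftrightarrow$ semi-regular hexagon $H$ with opposite sides congruent) is exactly what the paper formalizes as the bijection $\calH\stackrel{\sim}{\to}\calS$, and your map $\Psi$ is the forgetful map $p\colon\calH\to\calM$ in coordinates. The finiteness of fibres and the qualitative description of $c$ (lower semi-continuous, jumping across degeneration walls) are correct.

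Where the approaches genuinely diverge is in the enumeration that pins down Figure~\ref{c_intro:fig}. You leave this as an ``exhaustive walk over the strata'', which you yourself flag as the hard part. The paper sidesteps this entirely by constructing an explicit lift $\tilde p\colon\calH\to\matH^2$ to Teichm\"uller space and computing its image: it is the region bounded on one side by the vertical line $\Re z=\tfrac12$ and on the other by a straight line of slope $\sqrt3$ through $e^{i\pi/3}$. Since the fibre of $\calM=\matH^2/{\rm PSL}_2(\matZ)$ over a point $z$ in the fundamental domain $D$ meets this region exactly in the integer translates $z+n$ that remain inside it, the count $c(z)$ is simply the number of integers $n$ for which $z+n$ lies between the two bounding lines. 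The width of the strip at height $\Im z$ is linear in $\Im z$, which immediately gives both the stratification in the figure and the properness of $c$, with no case analysis beyond reading off intersections of a horizontal line with a strip.

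So your proposal is not wrong, but it trades the paper's one clean computation (the formula for $\tilde p$ and the shape of its image) for an open-ended enumeration over bases of $\Lambda$. If you want to complete your route, the practical move is to normalize: quotient your parameter space by rotations and scalings first (reducing to unordered triples $(\ell_1,\ell_2,\ell_3)$ up to scale), then write the resulting map to $\calM$ in the upper half-plane model. You will recover $\tilde p$, and the strip picture will do your ``hard part'' for you.
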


We see in particular that the square and the hexagonal torus at $z=i$ and $z=e^{\frac{\pi}{3}i}$ are the only flat tori that contains a unique minimal spine up to orientation-preserving isometries. We also discover that every flat torus $T$ contains finitely many minimal spines up to isometry, but this number increases (maybe unexpectedly?) and tends to $\infty$ as the flat metric tends to infinity in moduli space. 

By looking only at spines of minimal length we find the following.

\begin{proposition}
Every oriented flat torus $T$ has a unique spine of minimal length up to isometries of $T$. It also has a unique one up to orientation-preserving isometries, unless it is a rectangular non square torus and in this case it has two.
\end{proposition}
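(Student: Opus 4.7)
The plan is to reinterpret a spine of minimum length as the boundary of a hexagonal fundamental domain in $\R^2$ and reduce the minimization to a classical lattice-reduction problem. Write $T = \R^2/\Lambda$. By the analysis underlying Theorem~\ref{finite:spines:theorem}, every minimal spine of $T$ lifts in the universal cover to a tiling of $\R^2$ by $\Lambda$-translates of a single hexagon $H$ with all six interior angles equal to $\tfrac{2\pi}{3}$. Up to translation and rotation, $H$ is determined by three positive real numbers $a, b, c$ --- the common lengths of its three pairs of parallel opposite sides. Realizing $H$ as an equilateral triangle of side $a+b+c$ minus three corner triangles of sides $a, b, c$, one computes
\[
\mathrm{area}(H) = \tfrac{\sqrt{3}}{2}(ab+bc+ca) = V, \qquad L(H) = a+b+c,
\]
where $V := \mathrm{area}(T)$ is fixed and $L(H)$ is the spine length, since each pair of opposite sides of $H$ projects to a single arc in $T$.

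Next I would compute explicitly the three primary lattice vectors $\ell_1,\ell_2,\ell_3 \in \Lambda$ carrying $H$ to its three neighbouring hexagons (one per pair of parallel sides), verifying the relation $\ell_2 = \ell_1 + \ell_3$ --- so any two of them form a $\mathbb{Z}$-basis of $\Lambda$ --- and the squared norms
\[
|\ell_1|^2 = b^2+bc+c^2, \quad |\ell_2|^2 = a^2+ac+c^2, \quad |\ell_3|^2 = a^2+ab+b^2.
\]
Adding these and eliminating $ab+bc+ca$ via the area identity yields the key relation
\[
L^2 \;=\; \tfrac{1}{2}\bigl(|\ell_1|^2 + |\ell_2|^2 + |\ell_3|^2\bigr) + \sqrt{3}\,V.
\]
Since $V$ is fixed by the isometry class of $T$, minimizing $L$ is equivalent to minimizing $|u|^2 + |v|^2 + |u+v|^2$ over $\mathbb{Z}$-bases $(u,v)$ of $\Lambda$.

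The remaining step is classical Lagrange--Gauss reduction, which produces a reduced basis $(u,v)$ with $|u|\le |v|$ and $|v|\le |v\pm u|$ achieving this minimum. Such a basis is unique up to the symmetries $(u,v)\mapsto(\pm u,\pm v)$ and (when $|u|=|v|$) the swap $u\leftrightarrow v$, all of which produce the same hexagon in $T$; this establishes uniqueness of the minimum-length spine up to all isometries of $T$. The sole genuine ambiguity occurs when $|u+v|=|u-v|$, equivalently $u\cdot v=0$, i.e., when $\Lambda$ is rectangular: then the two distinct primary triples $(u,v,u+v)$ and $(u,v,u-v)$ produce two minimal spines of equal length, exchanged only by the orientation-reversing reflection $v\mapsto -v$. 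Thus there are exactly two minimizers up to orientation-preserving isometries in the rectangular non-square case, while in the square case the orientation-preserving $\pi/2$-rotation also exchanges them, restoring uniqueness.

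The main technical obstacle is justifying the correspondence in the first step --- namely, that the three across-edge translations of any hexagonal translation-tiling of $\R^2$ really do generate all of $\Lambda$ (not a proper sublattice), so that the minimization genuinely ranges over all $\mathbb{Z}$-bases of $\Lambda$. This follows from a combinatorial check at a vertex of the tiling, where three hexagons meet and yield the single independent relation $\ell_2=\ell_1+\ell_3$.
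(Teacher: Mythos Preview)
Your approach is correct and genuinely different from the paper's. The paper works entirely through the explicit parametrization $\tilde p\colon\Hno\to\matH^2$: it observes that all minimal spines of a given torus $T$ lift to points of $\tilde p(\Hno)\subset\matH^2$ with the same imaginary part, so their (unnormalized) tripod lengths can be compared directly, and the shortest is the unique one landing in the fundamental domain $D\cap\{\Re z\ge 0\}$; the oriented case is then read off from whether $\Re z_0$ is positive, negative, or zero. Your route replaces this coordinate picture with the algebraic identity
\[
L^2=\tfrac12\bigl(|\ell_1|^2+|\ell_2|^2+|\ell_3|^2\bigr)+\sqrt3\,V,
\]
which recasts the problem as classical Lagrange--Gauss lattice reduction and makes the special role of the rectangular lattices ($u\cdot v=0$) completely transparent. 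This is arguably more conceptual: it explains \emph{why} the minimizer is unique (the three shortest lattice vectors are essentially unique) rather than locating it inside a particular chart, and it recovers the paper's length formula $L(z)^2=(1+|z|^2-\Re z+\sqrt3\,\Im z)/\Im z$ as an immediate corollary by taking the triple $\{1,\,z,\,z-1\}$.

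Two small points to tighten. First, the phrase ``minimizing $L$ is equivalent to minimizing $|u|^2+|v|^2+|u+v|^2$ over $\mathbb Z$-bases'' is a slight overstatement, since not every basis arises from a semi-regular hexagon with strictly positive side lengths $a,b,c$. What you actually need (and what holds) is that the Gauss-reduced basis with $u\cdot v\le 0$ \emph{does} come from such a hexagon; a short computation gives $bL=-u\cdot v+V/\sqrt3>0$, and the reduced inequalities $|u|^2+2u\cdot v\ge 0$, $|v|\ge|u|$ then force $a,c>0$ as well. Alternatively this follows from the paper's observation that $\tilde p(\Hno)$ covers $\Mno$. Second, among the sign changes $(\pm u,\pm v)$ only the simultaneous flip $(-u,-v)$ preserves the unordered triple $\{u,v,u+v\}$; the mixed signs give the triple $\{u,v,u-v\}$ instead. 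This does not damage your conclusion, because those mixed signs fail to minimize $\Sigma$ unless $u\cdot v=0$, which is precisely the rectangular case you single out.
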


Let $\Spy\colon \calM(T) \to \matR$ be a function that assigns to a (unit-area) flat torus $z\in\calM(T)$ the minimal length $\Spy(z)$ of a spine in $T$. The function $\Spy$ may be called the \emph{spine systole} because it is analogous to the (geodesic) systole that measures the length of the shortest closed geodesic. We will prove that $\Spy$ has the following expression:
$$\Spy(z)=\sqrt{ \frac{1+|z|^2-|\Re(z)|+\sqrt{3}\Im(z)}{\Im(z)} }. $$
and has a unique global minimum at the hexagonal torus.

Now that the picture on the flat tori is perfectly understood, it is time to turn to higher genus hyperbolic surfaces and see if minimal spines have the same qualitative behavior there. Let $S_g$ be a closed orientable surface of genus $g \geq 2$ and let $\calM(S_g)$ be the moduli space of $S_g$. Every hyperbolic surface in $\calM(S_g)$ has a spine of minimum length by Theorem 
\ref{existencein2d} and its length defines a spine systole $\Spy\colon \calM(S_g) \to \matR$.

An \emph{extremal surface} is a hyperbolic surface that contains a disc of the maximum possible radius in genus $g$. Such surfaces were defined and studied by Bavard \cite{Ba} and various other authors, see \cite{GG} and the references therein. We can prove the following.

\begin{theorem} \label{hyperbolic:teo}
The function $\Spy\colon \calM(S_g) \to \matR$ is continuous and proper. Its global minima are precisely the extremal surfaces.
\end{theorem}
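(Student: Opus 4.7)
The statement has three separate assertions (continuity, properness, and identification of global minima) and I would prove them in turn.

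\textbf{Continuity.} The plan is to show both upper and lower semicontinuity. For upper semicontinuity, fix $z_0\in\calM(S_g)$ and let $\Gamma$ be a minimum-length spine of $z_0$, provided by Theorem \ref{existencein2d}. Since the isotopy class of $\Gamma$ is discrete, the same combinatorial type can be realised as a geodesic graph in every nearby hyperbolic structure $z$, with length a continuous function of $z$; hence $\limsup_{z\to z_0}\Spy(z)\le \Spy(z_0)$. For lower semicontinuity, choose $z_n\to z_0$ and spines $\Gamma_n\subset z_n$ with $\text{Length}(\Gamma_n)=\Spy(z_n)$. By Theorem \ref{existencein2d} each $\Gamma_n$ is a minimal, trivalent geodesic graph, and an Euler characteristic count shows each has exactly $4g-2$ vertices and $6g-3$ edges. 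Assuming $\Spy(z_n)$ bounded, the edges of $\Gamma_n$ have uniformly bounded length, so after fixing diffeomorphisms realising $z_n\to z_0$ a subsequence converges in Hausdorff distance to a geodesic graph $\Gamma_\infty\subset z_0$. One checks that $\Gamma_\infty$ is still a spine (some edges may collapse, but this only merges trivalent vertices into higher-valence ones, which preserves the defining condition that the complement is a disc), whence $\Spy(z_0)\le\text{Length}(\Gamma_\infty)=\liminf\Spy(z_n)$.

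\textbf{Properness.} By Mumford's compactness theorem, it suffices to prove $\Spy(z_n)\to\infty$ whenever the systole of $z_n$ tends to $0$. Let $\gamma_n\subset z_n$ be a simple closed geodesic of length $\ell_n\to 0$. The Collar Lemma provides an embedded annular collar $C_n\subset z_n$ of width $w_n=\operatorname{arcsinh}(1/\sinh(\ell_n/2))\sim\log(1/\ell_n)\to\infty$. Any spine $\Gamma_n$ of $z_n$ must meet $\gamma_n$ (since $\gamma_n$ is essential while $z_n\setminus\Gamma_n$ is a disc), and moreover $\Gamma_n\cap C_n$ must separate the two boundary circles of $C_n$: otherwise a curve isotopic to $\gamma_n$ would survive in the complementary disc, a contradiction. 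Hence $\Gamma_n$ contains an arc crossing $C_n$, of length at least $2w_n\to\infty$, so $\Spy(z_n)\to\infty$.

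\textbf{Identification of global minima.} Let $\Gamma\subset z$ be a minimum-length spine of $z\in\calM(S_g)$, a minimal trivalent geodesic graph with $2\pi/3$ angles, $4g-2$ vertices and $6g-3$ edges. Its complement $D=S\setminus\Gamma$ is an open disc that lifts isometrically to a convex hyperbolic polygon $\tilde D\subset \matH^2$ with $12g-6$ geodesic sides identified in pairs, all interior angles equal to $2\pi/3$, and area $2\pi(2g-2)$ (by Gauss--Bonnet, fixed independently of the shape). Each side of $\tilde D$ projects onto an edge of $\Gamma$ and two sides project onto the same edge, so
\begin{equation*}
\text{Length}(\Gamma)=\tfrac{1}{2}\mathrm{Perimeter}(\tilde D).
\end{equation*}
Among all convex hyperbolic $(12g-6)$-gons with every angle $2\pi/3$, the regular one minimises the perimeter (a hyperbolic isoperimetric inequality for polygons with prescribed angles: by a Lagrange multiplier argument on the side lengths, the only critical configuration is the regular polygon, which one can also see as the configuration simultaneously maximising the inscribed radius, i.e.~Bavard's extremal polygon of circumradius $R_g$). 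A hyperbolic surface admits such a regular polygon as the complement of a minimal spine if and only if it is extremal; hence $\Spy$ attains its global minimum precisely on the extremal surfaces.

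\textbf{Main obstacle.} The most delicate step is the fixed-angle isoperimetric inequality invoked in the last paragraph: one must show that among convex hyperbolic $(12g-6)$-gons with all angles $2\pi/3$ the regular polygon uniquely minimises perimeter, and then match this uniqueness with Bavard's characterisation of the extremal polygon. A lesser technical point is to rigorously control graph degenerations in the compactness argument of lower semicontinuity, ensuring that only edge collapses producing higher-valence vertices occur, so that the limit $\Gamma_\infty$ remains a spine (possibly not minimal) and the length passes correctly to the limit.
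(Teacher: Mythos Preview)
Your approach matches the paper's proof almost exactly: properness via Mumford's compactness theorem plus the Collar Lemma, and the identification of global minima by cutting $S$ along a minimum-length spine to obtain a hyperbolic $(12g-6)$-gon with all interior angles $\tfrac{2\pi}{3}$, observing that $L(\Gamma)$ equals half its perimeter, and then invoking the isoperimetric inequality for hyperbolic polygons with prescribed angles together with Bavard's characterisation of extremal surfaces.

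Two remarks. First, what you flag as the ``main obstacle'' --- that among hyperbolic polygons with all angles $\tfrac{2\pi}{3}$ the regular one uniquely minimises perimeter --- is not proved from scratch in the paper: it is quoted as a theorem of Porti (\emph{Hyperbolic polygons of minimal perimeter with given angles}, Geom.\ Dedicata \textbf{156} (2012), 165--170), who shows that among hyperbolic $n$-gons with fixed interior angles the unique perimeter minimiser is the one admitting an inscribed circle. So you may simply cite this rather than run a Lagrange-multiplier argument yourself. Second, your treatment of continuity is considerably more careful than the paper's, which dispatches it in a single sentence (``clearly continuous because the length of spines varies continuously in the metric''). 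Your upper/lower semicontinuity argument is a reasonable way to make this honest; the compactness step for lower semicontinuity can be handled, if you prefer, by the same Blaschke--Go\l\c{a}b machinery used in the proof of Theorem~\ref{existencein2d}, which sidesteps the edge-collapse analysis you mention.
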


A full classification of all minimal spines on all hyperbolic surfaces would of course be desirable; for the moment, we content ourselves with the following.

\begin{proposition}
Every closed hyperbolic surface $S$ has finitely many minimal spines of bounded length.
\end{proposition}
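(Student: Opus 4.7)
The plan is to combine a combinatorial reduction with a local rigidity argument based on the Hessian of the length functional. Let $S$ be a closed hyperbolic surface of genus $g\geq 2$ and fix $L>0$.

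First, since $S$ has genus $g\geq 2$, by Theorem~\ref{existencein2d} any minimal spine $\Gamma\subset S$ is a trivalent geodesic graph in which three edges meet at each vertex at angles of $\tfrac{2\pi}{3}$. Combined with Euler's formula and the fact that the complement of $\Gamma$ is an open disc, this gives $V=4g-2$ vertices and $E=6g-3$ edges. Since there are only finitely many isomorphism types of trivalent graphs on $V$ vertices, it suffices to count minimal spines of each fixed combinatorial type $G$.

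Next I would bound the relevant isotopy classes using the universal cover $\matH^2$. Fix a compact fundamental domain $D$ for the $\pi_1(S)$-action on $\matH^2$, pick a vertex $v_0$ of $\Gamma$, and lift it to $\tilde v_0\in D$. By connectivity of $\Gamma$ and the length bound, every other vertex has a preferred lift within distance $(V-1)L$ of $\tilde v_0$. The isotopy class of the embedding is encoded by the deck transformations $\gamma_e\in\pi_1(S)$ associated to the edges; each $\gamma_e$ satisfies $d(\tilde v_{i(e)},\gamma_e\tilde v_{j(e)})\leq L$ and thus lies in a finite subset of $\pi_1(S)$. So for each combinatorial type only finitely many isotopy classes arise, and for each isotopy class the parameter space of geodesic realizations is a compact subset of $(\matH^2)^V$ (after fixing $\tilde v_0\in D$).

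The heart of the argument is to show that each minimal spine is isolated in this parameter space. For this I would compute the Hessian of the length functional $F=\sum_e d(v_{i(e)},v_{j(e)})$ at a critical point. A direct calculation in $\matH^2$ (for instance in the upper half plane) shows that the Hessian of $d(p,q)$ has rank $2$ at any $p\neq q$, with kernel spanned by the two along-edge directions $(\hat\tau_p,0)$ and $(0,\hat\tau_q)$: in the orthonormal frame, its only nonzero $2\times 2$ block has diagonal entries $\coth d$ and off-diagonal entries $-1/\sinh d$, of determinant $\coth^2 d-1/\sinh^2 d=1$. Summing over edges, a kernel vector $\xi=(\xi_v)$ of the total Hessian must have $\xi_v$ lying simultaneously along each of the three edges incident to $v$; since at a minimal spine these three directions form angles of $\tfrac{2\pi}{3}$, we conclude $\xi_v=0$ at every vertex. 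Hence the Hessian of $F$ is positive definite, each minimal spine is a strict local minimum of $F$, and minimal spines form a discrete subset of the parameter space.

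A closed discrete subset of a compact space is finite, so summing over the finitely many combinatorial types and isotopy classes completes the proof. The main obstacle is the rank-$2$ computation for the per-edge Hessian: the Euclidean analogue has rank only $1$ because of translation symmetry, and without the strict convexity coming from negative curvature the kernel argument at trivalent vertices would fail to produce positive definiteness.
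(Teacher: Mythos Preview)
Your argument is correct, but it is organized quite differently from the paper's.  For compactness, the paper simply invokes the proof of Theorem~\ref{existencein2d} (Blaschke and Go{\l}\k{a}b) to say that any family of spines of equibounded length is precompact in the Hausdorff topology, whereas you carry out an explicit finite-dimensional reduction by bounding combinatorial types, isotopy classes via deck transformations, and vertex positions in the cover.  For discreteness, the paper reuses the convex-interpolation device from Theorem~\ref{minima} and Lemma~\ref{convexity}: if $\Gamma_n\to\Gamma$ were distinct minimal spines, then $\lambda\mapsto L(\Gamma_n^\lambda)$ would be strictly convex with vanishing derivative at \emph{both} endpoints, a contradiction.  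You instead compute the per-edge Hessian block $\bigl(\begin{smallmatrix}\coth d & -1/\sinh d\\ -1/\sinh d & \coth d\end{smallmatrix}\bigr)$ on normal directions and use the $\tfrac{2\pi}{3}$ angle condition to kill the tangential kernel, obtaining a positive-definite total Hessian and hence isolated critical points.  Both proofs rest on the same phenomenon---strict convexity of hyperbolic distance, absent in the flat case---but the paper's version is shorter because it recycles earlier lemmas, while yours is self-contained and yields the extra information that every minimal spine is a nondegenerate local minimum of the length on its parameter space.
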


In particular, we do not know if a closed hyperbolic surface has finitely many minimal spines overall (counting spines only up to isometry does not modify the problem, since the isometry group of $S$ is finite).

In higher dimensions, Question \ref{main:question} has already been addressed by Choe \cite{Choe} who has provided a positive answer for all closed irreducible 3-manifolds. The techniques used in that paper are much more elaborate than the ones we use here.

\section{Preliminary definitions}
\subsection{Spines.} \label{spines:subsection}
We recall some well-known notions that apply both in the piecewise-linear and in the smooth category of manifolds.

A \emph{smooth finite simplicial complex} (or a \emph{finite polyhedron}) in a smooth $n$-manifold $M$ with (possibly empty) boundary is a subset $P\subset \interior M$ homeomorphic to a simplicial triangulation, such that every simplex is diffeomorphic through some chart to a standard simplex in $\R^n$. 

The subset $P$ has a well-defined \emph{regular neighborhood}, unique up to isotopy: this is a piecewise-linear notion that also applies in the smooth category \cite{H}. A regular neighborhood $N$ of $P$ is a compact smooth $n$-dimensional sub-manifold $N\subset M$ containing $P$ in its interior which collapses simplicially onto $P$ for some smooth triangulation of $M$; as a consequence $N\setminus P$ is an open collar of $\partial N$. If the manifold $M$ itself is a regular neighborhood of $P$ we say that $P$ is a \emph{spine} of $M$.

This definition of course may apply only when $\partial M \neq \emptyset$, so to define a spine $P$ of a closed manifold $M$ we priorly remove a small open ball from $M$. In particular we get that $M\setminus P$ is an open ball in this case. 

%La spina di una variet‚àö‚Ä† di dimensione $n$ non ha necessariamente dimensione $n-1$, ad esempio la spina di una sfera $S^2$ ‚àö¬Æ un punto. In questi casi la sua misura di hausdorff ‚àö¬Æ banalmente zero e non ha senso chiedersi se ci sono spine di area minima.
%When $M$ is compact with boundary, one usually defines a spine of $M$ as a polyhedron $P\subset \Interior(M)$ such that $M\subset \Interior(P)$ is a collar of $\partial M$. 

In this paper we only consider spines $P$ of closed manifolds $M$. We also consider by assumption only spines $P$ with $\dim P < \dim M$, so that if $M$ is endowed with a Riemannian metric $P$ has a well-defined and finite $(n-1)$-dimensional Hausdorff measure $\H^{n-1}(P)$ called \emph{area}.

The area can be zero in some cases because we allow $P$ to have dimension strictly smaller than $n-1$. For instance, any point is a spine of $S^n$ and any complex hyperplane is a spine of complex projective space $\matCP^n$, because their complements are open balls: in both cases the spine has zero area because it has codimension bigger than 1.

A spine may have strata of mixed dimensions: for instance a natural spine for $S^m \times S^n$ is the transverse union of two spheres $S^m \times q$ and $p \times S^n$ (the reader may verify that the complement is an open ball).

%The main question addressed in this paper is whether every compact Riemannian manifold has a spine of minimal area. 
%The main issue concerning this problem is the following: a sequence $P_i$ of spines for $M$ with bounded area may be proved to tend to some object $P_\infty$ via geometric measure arguments, but it is not clear whether this limit object $P_\infty$ is still a spine for $M$. We can solve this issue here only on surfaces.

%\subsection{Spines of surfaces}\label{Sos}
%The homotopy type of a spine $\Gamma$ of a closed surface $S$
%is completely determined  by the topology of $S$.

In the following we consider only the case in which the dimension of the manifold $M$ is two.

\begin{remark}\label{Sos}
\rm{The dimension of a spine $\Gamma$ of a closed surface $S$ is less or equal one and its
homotopy type is completely determined  by that of $S$.
Indeed,
\begin{itemize}
\item $\Gamma$ is homotopically equivalent to a point if and only if $S$ is diffeomorphic to a sphere.
Indeed, a regular neighbourhood of a point must be a disc.
\item $\Gamma$ is homotopically equivalent to a circle if and only if $S$ is diffeomorphic to a projective plane.
Indeed, a regular neighbourhood of a circle could be nothing but a M\"obius strip,
because an annulus has too many boundary components. 
\item Finally, assume that $S$ is nor diffeomorphic to a sphere, neither to a projective plane.
A spine of $S$ is an embedded graph $\Gamma\subset S$. 
Denoting with $e$ the number of edges, with $v$ the number of vertices of $\Gamma$
and by $\chi$ the Euler characteristic, 
a necessary condition for $\Gamma$ to be a spine of $S$ is
\begin{equation}\label{Eulerchar}
\chi(S)-1=\chi (S-B^2)=  \chi(\Gamma)=v-e\,.
\end{equation}
\end{itemize}}
\end{remark}

\subsection{Networks in Riemannian surfaces.}
Let $S$ be a Riemannian surface.
An  embedded graph $\Gamma\subset S$ is a 
\emph{network}:
a union of a finite number of supports of simple smooth curves
$\gamma^i:\left[0,1\right]\rightarrow S$, intersecting only at their endpoints.
A point in which two or more curves concur is called \emph{multipoint}.
Each curve $\gamma^i$ of the network has length $\H^1\left( {\rm Im} \left(\gamma^i\right) \right)$
and the length of $\Gamma$ is the sum of the lengths of all the curves, that is,
\begin{equation}\label{lengthfunctional}
L\left(\Gamma\right)=\H^1\left(\bigcup_{i=1}^n\left( {\rm Im} \left(\gamma^i\right) \right) \right)
=\sum_{i=1}^n \H^1\left( {\rm Im} \left(\gamma^i\right) \right) .
\end{equation}

In the following, we will search the minima of this functional, 
restricted to the set of spines of a closed Riemannian surface $S$, endowed with the Hausdorff topology.

\subsection{First variation of the length functional.}
Let $\Gamma$ be a network in a closed Riemannian surface $S$.
Let ${\Phi_t}$ with $t\in \left[0,T\right]$ be a smooth 
family of diffeomorphisms of $S$, with $\Phi_0(x)=x$ for all $x\in\Gamma$
and let $\Gamma_t=\Phi_t\left(\Gamma\right)$.
Consider the vector field $X$ on $S$ defined as
$$X(x)=\frac{d}{dt}\Phi_t(x)\Big\vert_{t=0}.$$

The first variation formula for the length functional $L$ 
for a network $\Gamma$ is (see for instance~\cite{maggi}):
\begin{equation}\label{firstvar}
\frac{d}{dt}L(\Gamma_t)\Big\vert_{t=0}
=\int_{\Gamma}{\rm div}_{\tau} X\,d\H^1\,,
\end{equation}
where with ${\rm div}_{\tau}$ we denote the tangential divergence.

Let $x_1,\ldots, x_m$ be the multipoints of $\Gamma$.
At every multipoint $x_j$, $l_j$ curves  concur.  For $k=1,\ldots l_j$
denote with
$\tau_j^k$  the unit tangent vector to each of that curves at $x_j$. 
In particular, 
\begin{equation}\label{1}
\frac{d}{dt}L(\Gamma_t)\Big\vert_{t=0}
=-\int_{\Gamma} H\cdot X\,d\H^1
+\sum_{j=1}^m \left(  \sum_{k=1}^{l_j}\tau_j^k\cdot X\right) \,,
\end{equation}
where $H$ is the curvature of the curve.
A network 
$\Gamma$ is \emph{stationary}
if there holds
\begin{equation}\label{statnet}
\int_{\Gamma}{\rm div}_{\tau} X\,d\H^1=0\,\quad\text{for every vector field}\, X\, \text{on} \, S\,,
\end{equation}

Thanks to~\eqref{1}, condition~\eqref{statnet} is equivalent to require that each curve of the network $\Gamma$ 
is a geodesic arc and the sum 
of the unit tangent vectors of concurring curves at a common endpoint is equal to zero.

We will actually consider only spines, and reserve the word \emph{minimal} for a more restrictive configuration where only three edges concur at each multipoint, see Definition \ref{minimal:definition}. The reason for that is that spines of minimal length will always be of this type.

\subsection{Rectifiable sets and stationary varifolds.}
Let $S$ be a Riemannian surface.
We recall that a set $\Gamma\subset S$ is \emph{countably $1$-rectifiable} if it can be covered by countably many
images of Lipschitz maps from $\mathbb{R}$ to $S$, except a subset $\H^1-$negligible.

Let $\Gamma$ be a countably $1$-rectifiable, $\H^1$-measurable subset of $S$
and let $\theta$ be a positive locally $\H^1$-integrable function on $\Gamma$.
Following \cite{Allard,AA}
we define the \emph{rectifiable $1$-varifold} $(\Gamma,\theta)$
to be the equivalence class of all pairs $(\widetilde{\Gamma},\widetilde{\theta})$,
where $\widetilde{\Gamma}$ is countably $1$-rectifiable with 
$\H^1\left((\Gamma\setminus\widetilde{\Gamma})\cup(\widetilde{\Gamma}\setminus\Gamma)\right)=0$
and where $\widetilde{\theta}=\theta$ $\H^1$-a.e. on $\Gamma\cap\widetilde{\Gamma}$.
The function $\theta$ is called \emph{multiplicity} of the varifold.

A varifold $(\Gamma,\theta)$ is \emph{stationary} if there holds
\begin{equation}\label{statvar}
\int_\Gamma \theta \,{\rm div}_\tau X\,d\H^1 = 0,
\end{equation}
for any vector field $X$ on $S$. Notice that, if $\theta$ is constant and $\Gamma$ is a network,
condition \eqref{statvar} is consistent with \eqref{statnet}.   Recalling \eqref{firstvar}
this means that the network $\Gamma$ is a critical point of the length functional.

\section{Riemannian surfaces}

In this section we prove an existence result 
for the spines of minimal length of a closed Riemannian surface
(Theorem~\ref{existencein2d}).

\begin{proposition}
Consider a closed Riemannian surface $S$ not diffeomorphic to a sphere.
Then there is a constant $K> 0$ such that $L(\Gamma)\geq K$ for every spine $\Gamma$ of $S$.
\end{proposition}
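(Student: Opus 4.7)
The plan is to use a systolic-type lower bound. Since $S$ is not diffeomorphic to a sphere, $\pi_1(S)$ is non-trivial, and compactness of $S$ ensures that the systole
\[
\sigma(S) := \inf\{L(\gamma) : \gamma \subset S \text{ is a non-contractible closed curve}\}
\]
is strictly positive (it is realized by a shortest non-contractible closed geodesic via a standard Arzel\`a–Ascoli argument). I claim that $L(\Gamma) \geq \sigma(S)$ for every spine $\Gamma$, so one may take $K := \sigma(S)$.

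The key topological input is the defining property of a spine: since $S \setminus \Gamma$ is an open disc, a regular neighbourhood $N$ of $\Gamma$ has a single boundary circle and $S = N \cup D$ with $D$ a closed disc. Applying van Kampen's theorem (with $N$ deformation retracting onto $\Gamma$ and $D$ simply connected) shows that the inclusion $\Gamma \hookrightarrow S$ induces a \emph{surjection} $\pi_1(\Gamma) \twoheadrightarrow \pi_1(S)$, whose image is therefore non-trivial.

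To extract a short non-contractible loop from $\Gamma$, I would fix a spanning tree $T \subset \Gamma$ and, for each edge $e_i$ of $\Gamma$ not in $T$, form the simple cycle $\gamma_i \subset \Gamma$ consisting of $e_i$ together with the unique path in $T$ joining its endpoints. These cycles generate $\pi_1(\Gamma)$, so if every $\gamma_i$ were contractible in $S$ the map $\pi_1(\Gamma) \to \pi_1(S)$ would be trivial, contradicting the previous step; hence some $\gamma_i$ is non-contractible in $S$. Its length is bounded above by $L(\Gamma)$ (each edge of $\Gamma$ occurs in $\gamma_i$ at most once) and below by $\sigma(S)$, yielding $L(\Gamma) \geq \sigma(S) > 0$ as required. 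I do not foresee any serious obstacle: no information about the combinatorial structure of $\Gamma$ (valences, presence of degree-one vertices, and the like) is needed, and the only step requiring care is the standard van Kampen computation recalled above.
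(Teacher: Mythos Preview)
Your proof is correct and follows essentially the same route as the paper's: both arguments show that the inclusion $\Gamma\hookrightarrow S$ induces a surjection on $\pi_1$, extract from this a non-contractible simple loop inside $\Gamma$, and bound its length below by a positive constant depending only on $S$. The only cosmetic differences are that the paper takes $K=2r$ with $r$ the injectivity radius (a slightly weaker constant than your $\sigma(S)$) and dismisses the existence of an embedded non-contractible loop with an ``easily implies'', whereas you spell this out via the spanning-tree argument.
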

\begin{proof}
Let $r>0$ be the injectivity radius of $S$. 
Every homotopically non-trivial closed curve in $S$ has length at least $2r$. 
Every spine $\Gamma$ contains at least one homotopically non-trivial 
embedded closed curve and hence has length at least $K=2r$.

To verify the last fact, recall that $S\setminus \Gamma$ is an open $2$-disc 
and hence the inclusion map $i\colon \Gamma \hookrightarrow S$ 
induces a surjection $i_*\colon \pi_1(\Gamma)\to \pi_1(S)$. 
Since $\pi_1(S)\neq \{e\}$ 
the spine $\Gamma$ contains some homotopically non-trivial loop, 
and this easily implies that it also contains an embedded one.
\end{proof}

{\it Proof of Theorem~\ref{existencein2d}.\;}
Thanks to the topological observations made in Subsection~\ref{Sos}, the case of the sphere is
trivial and that of the projective plane is well known.
Therefore, we will henceforth suppose that $S$ is neither diffeomorphic to a sphere nor to a projective plane.
We want to prove that
\begin{equation} \label{ell:eqn}
\ell = \inf\big\{L(\Gamma)\ \vert\ \Gamma\,\text{is a spine of}\,S\big\}\,.
\end{equation}
is a minimum. We subdivide the proof into four steps. \\

\textbf{Step 1:} 
\textit{$\Gamma_n$ minimizing sequence of spines converges 
to a closed connected and rectifiable set $\Gamma_\infty$.}

We have just defined 
$$\ell = \inf_{\Gamma\subset S} \H^1(\Gamma)$$
where $\Gamma$ varies among all spines of $S$. 
Let $\Gamma_n$ be a \emph{minimizing sequence} of spines, that is a sequence 
such that $\H^1(\Gamma_n) \to \ell$. In particular,
the sets $\Gamma_n$ are closed, connected, rectifiable, and 
$S\setminus \Gamma_n$ is homeomorphic to an open disc for every $n$.

Thanks to Blaschke Theorem \cite[Theorem~4.4.15]{AT},
up to passing to a subsequence, the sequence converges 
$\Gamma_n\to \Gi$ to a compact set $\Gi$ in the Hausdorff distance, and
by Golab Theorem \cite[Theorem~4.4.17]{AT} we get
\begin{equation}\label{convga}
\H^1(\Gi)\,\le\,\liminf_n\H^1(\Gamma_n) = \ell
\end{equation}
and $\Gi$ is connected.
Moreover by the Rectifiability Theorem \cite[Theorems 4.4.7]{AT},
the limit set $\Gi$ is rectifiable and connected 
by injective rectifiable curves.\\ 
\textbf{Step 2:}
\textit{Structure of $\Gamma_\infty$.}

%Now we have proved that $\Gamma\subset \Gi$ is a spine with
%$$\ell \leq \H^1(\Gamma) \leq \H^1(\Gi) = \ell$$
%We have proved the existence of a minumum of the functional $L$;
%we know what is the spine of $S$ in the case of the sphere and the projective plane,
%to complete the proof of Theorem \ref{existencein2d} we have to  describe the structure of
%the minimal spine of a more general $S$. \\
%If $l$ is equal to zero, the spine is a point, the surface is diffeomorphic to a sphere.
%We have already considered this case.
%We consider only the case in which $l$ is strictly positive.
We have proved that $\Gamma_\infty$ has minimal length $\ell$.
We can associate to $\Gamma_\infty$ a rectifiable varifold with multiplicity $1$.

Since $\Gamma_\infty$ is of minimal length, it follows that the corresponding 
varifold is stationary, hence is composed by a finite number of geodesic segments, 
joining finitely many nodes (see \cite{AA}). The minimality of $\Gamma_\infty$ 
also implies that each node is the meeting point of exactly three curves,
forming angles of $\frac{2\pi}{3}$ by standard arguments:
suppose by contradiction that more than three curves concur at meeting point $O$.
Consider a sufficiently small neighbourhood of this multiple point $O$ where there are not other nodes.
Take two segments concurring in $O$ forming an angle $<\frac{2\pi}{3}$
and two points $P$ and $Q$ of these segments in the considered neighbourhood.
Lift $P$ and $Q$ to the tangent plane of the surface at $O$ through the exponential map.
We know that in the tangent plane the Steiner configuration is the minimal length configuration 
joining $O$, $P$ and $Q$.
Replace the part of  two curves on the surface joining $P$ and $Q$ with $O$ in the neighbourhood
with the image through the exponential map of the three segments of the Steiner configuration.
Repeating iteratively this procedure, we obtain only triple junctions.\\
\textbf{Step 3:}
\textit{The set $\Gi$ intersects every homotopically non-trivial closed curve $\gamma$.}

If $\gamma$ is a homotopically non-trivial closed curve, then
each $\Gamma_n$ intersects $\gamma$, 
because $\gamma$ cannot be contained in the open disc $S\setminus \Gamma_n$. 
Hence, the Hausdorff limit $\Gi$ also intersects $\gamma$ because $\gamma$ is compact.\\
\textbf{Step 4:}
\textit{The set $\Gi$ is a spine.}

Let $\calS$ be the set of all closed subsets of $\Gi$ 
that intersect every homotopically non-trivial closed curve in $S$. 
By Zorn's lemma there is a $\Gamma \in \calS$ which is minimal with respect to inclusion. 
We now prove that $\Gamma$ is a spine.

The open set $S\setminus \Gamma$ contains no non-trivial closed curve. 
If $S\setminus \Gamma$ is an open disc we are done: we prove that this is the case.
If one component $U$ of $S\setminus \Gamma$ is not an open disc, it is not simply connected: 
hence it contains a simple closed curve $\gamma$ which is homotopically non-trivial in $U$, 
but is necessarily trivial in $S$ since it does not intersect $\Gamma$. 
Therefore $\gamma$ bounds a disc $D\subset S$. 
Since $D\not\subset U$, the intersection $D\cap \Gamma$ is non-empty: 
if we remove $D\cap \Gamma$ from $\Gamma$ we obtain another element of $\calS$ 
strictly contained in $\Gamma$, a contradiction.

Therefore $S\setminus \Gamma$ consists of open discs only. If they are at least two, 
there is an arc in $\Gamma$ adjacent to two of them: by removing 
from $\Gamma$ an open sub-arc in this arc we get again another element in $\calS$ strictly contained in $\Gamma$.

We know that $\Gamma\subset \Gamma_\infty$ is a spine, and hence $\Gamma = \Gamma_\infty$, for if not the length of $\Gamma$ would be strictly smaller than that $\ell$ of $\Gamma_\infty$, a contradiction by (\ref{ell:eqn}). 
\qed

\begin{definition} \label{minimal:definition}
A spine of a Riemannian surface $S$ is \emph{minimal} if it is a point, a closed geodesic, or if it is composed by finitely many geodesic arcs, meeting with angle $\frac{2\pi}{3}$ at trivalent points.
\end{definition}
We have shown that a spine of minimal length is minimal. Of course, the converse may not hold.
However, a minimal spine is a stationary point of the length functional thanks to~\eqref{1}.

\begin{remark}\rm
Notice that the extension of the existence result in higher dimension present several difficulties: 
there is no higher dimensional version of Golab Theorem, 
because of the lack of semicontinuity of the Hausdorff measure.
Also, it is not clear if a limit of spines is still a spine. 
However, as we already observed, the existence of a spine of minimal area 
in a closed irreducible 3-manifold has been proved in \cite{Choe}.
\end{remark}

\begin{remark}\label{numberofvertices}\rm{
If the surface $S$ is neither diffeomorphic to a sphere nor to a projective plane, we have shown
that minimal spines are trivalent graphs. Hence, adding the 
equation $3v=2e$ to~\eqref{Eulerchar}, we get that the number of edges and that of vertices of a minimal spine
are completely determined by the topology of $S$.}
\end{remark}

\subsection{Non positive constant curvature surfaces.}
We restrict the attention to the case of non positive constant curvature surfaces. 
The goal is to show that minimal spines are local minimizers
for the length functional, justifying our choice for the adjective ``minimal''.
Let us begin with a definition and a well-known lemma about the convexity of the distance function in $\matH^2$ that we take from \cite{Fm}.
\begin{definition}\label{convexcomb}
Let $x_1,x_2$ be points in $\mathbb{H}^2$, $\mathbb{R}^2$ or $S^2$
and $\lambda\in[0,1]$.
The \emph{convex combination} $x=\lambda x_1+(1-\lambda)x_2$
is defined as follows:
\begin{align*}
\text{in}\;\mathbb{R}^2:\quad\quad & x=\lambda x_1+(1-\lambda)x_2\\
\text{in}\;\mathbb{H}^2,S^2:\quad\quad & 
x=\frac{\lambda x_1+(1-\lambda)x_2}{\Vert \lambda x_1+(1-\lambda)x_2 \Vert}
\end{align*}
where in the $\mathbb{H}^2$ case we are considering the hyperboloid model in $\mathbb{R}^3$
with the Lorentzian scalar product  $\left\langle\cdot,\cdot\right\rangle$
and $\Vert v \Vert=\sqrt{-\left\langle v,v \right\rangle}$.
\end{definition}

\begin{lemma}\label{convexity}
Let $x_1,x_2,y_1,y_2$ be points of the hyperbolic plane $\mathbb{H}^2$.
For $\lambda\in (0,1)$, consider the convex combinations
$x_\lambda=\lambda x_1+(1-\lambda)x_2$ and $y_\lambda=\lambda y_1+(1-\lambda)y_2$.
Then, we have
\begin{equation*}
d(x_\lambda,y_\lambda)\leq\lambda d(x_1,y_1)+(1-\lambda)d(x_2,y_2)\,,
\end{equation*}
with equality only if $x_1,x_2,y_1,y_2$ belong to the same line. 
Here $d$ denotes the distance in $\mathbb H^2$.
\end{lemma}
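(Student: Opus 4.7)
I would work in the hyperboloid model $\mathbb{H}^2 \subset \mathbb{R}^{2,1}$, where $\cosh d(x,y) = -\langle x,y\rangle$ and the convex combination of Definition~\ref{convexcomb} is the radial projection onto the hyperboloid of the Euclidean chord $\lambda x_1 + (1-\lambda)x_2$. Setting $u_\lambda := \lambda x_1 + (1-\lambda)x_2$ and $v_\lambda := \lambda y_1 + (1-\lambda)y_2$, a short computation using $\langle x_i,x_i\rangle = \langle y_j,y_j\rangle = -1$ gives
\[
\|u_\lambda\|^2 = 1+4\lambda(1-\lambda)\sinh^2(a/2), \qquad \|v_\lambda\|^2 = 1+4\lambda(1-\lambda)\sinh^2(b/2),
\]
with $a := d(x_1,x_2)$, $b := d(y_1,y_2)$, while $-\langle u_\lambda,v_\lambda\rangle$ is a quadratic polynomial in $\lambda$ whose four coefficients are the hyperbolic cosines $\cosh d(x_i,y_j)$.

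Since $\cosh$ is strictly increasing on $[0,\infty)$, the lemma is equivalent to
\[
\frac{-\langle u_\lambda,v_\lambda\rangle}{\|u_\lambda\|\,\|v_\lambda\|} \;\le\; \cosh\!\bigl(\lambda\,d(x_1,y_1) + (1-\lambda)\,d(x_2,y_2)\bigr).
\]
The plan is to expand the right-hand side via the addition formula $\cosh(s+t) = \cosh s\cosh t + \sinh s\sinh t$, clear the two square roots by squaring once, and verify the resulting polynomial inequality in $\lambda$ directly. The underlying geometric reason for the bound is the CAT($-1$) structure of $\mathbb{H}^2$: the distance between corresponding points of two arc-length-parametrized geodesic segments is a strictly convex function of the parameter, so the inequality is precisely the chord-bound for this convex function. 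Strict convexity also identifies the equality case: saturation can occur only when the two segments share a common hyperbolic line, equivalently when the vectors $x_1,x_2,y_1,y_2$ span a common timelike $2$-plane of $\mathbb{R}^{2,1}$, which cuts the hyperboloid in a single geodesic.

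The main obstacle is the algebraic bookkeeping introduced by the two square roots $\|u_\lambda\|$ and $\|v_\lambda\|$. A useful simplification is to treat first the one-geodesic case $y_1 = y_2 =: y$, where $\|v_\lambda\| = 1$ and the inequality reduces to a statement about $\lambda \mapsto \cosh d(x_\lambda,y)$ involving only two of the pairwise distances; this case can be handled by a direct computation and already exposes the relation between the projective parameter $\lambda$ and the arc-length position of $x_\lambda$ on the geodesic $x_2 x_1$. The general case then follows either by combining two such one-geodesic estimates through the triangle inequality, or by a Cauchy--Schwarz-type bound for timelike vectors in $\mathbb{R}^{2,1}$ which simultaneously delivers both the inequality and its equality characterization as linear dependence of $\{x_1,x_2,y_1,y_2\}$ in the ambient Lorentzian space.
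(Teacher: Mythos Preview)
Your approach is quite different from the paper's: the paper gives a short synthetic argument for $\lambda=\tfrac12$ only, using point reflections. With $\tau=\sigma_{y_{1/2}}\circ\sigma_{x_{1/2}}$ (a hyperbolic translation along the line through $x_{1/2},y_{1/2}$ of displacement $2d(x_{1/2},y_{1/2})$) it combines the triangle inequality with the minimum-displacement property of $\tau$ on its axis; no Lorentzian algebra appears, and the passage to general $\lambda$ is simply asserted.

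More seriously, your plan cannot succeed as written because the inequality is \emph{false} for general $\lambda$ under the projective combination of Definition~\ref{convexcomb}. Take $y_1=y_2=x_2$ and $a:=d(x_1,x_2)>0$; the claim becomes $d(x_\lambda,x_2)\le\lambda a$. Your own formula $\|u_\lambda\|^2=1+4\lambda(1-\lambda)\sinh^2(a/2)$ yields
\[
\cosh d(x_\lambda,x_2)=\frac{(1-\lambda)+\lambda\cosh a}{\|u_\lambda\|},
\]
and for $a=2$, $\lambda=\tfrac14$ one finds $d(x_{1/4},x_2)\approx 0.60>0.50=\lambda a$. The projective parameter $\lambda$ and the arc-length fraction on the segment $x_2x_1$ agree only at $\lambda=0,\tfrac12,1$, so your CAT($-1$) chord bound---which is a statement about constant-speed geodesics---does not transfer, and the ``polynomial inequality in $\lambda$'' you intend to verify is simply not true. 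The paper's midpoint argument is valid precisely because the two parametrizations coincide there; the honest fix, for both your sketch and the paper's ``without loss of generality'', is to replace the projective combination by the constant-speed geodesic parametrization throughout, after which your CAT($-1$) observation and the paper's $\lambda=\tfrac12$ computation extend to all $\lambda$ by the standard dyadic-rational argument.
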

\begin{proof}
Without loss of generality, for simplicity, we prove only the case $\lambda=\frac 12$,
therefore $x_\lambda$ (resp. $y_\lambda$) is the midpoint of $x_1$ and $x_2$ (resp. $y_1$ and $y_2$).
If $x_\lambda = y_\lambda$ the theorem is trivial, hence we suppose $x_\lambda \neq y_\lambda$.

Let $\sigma_p$ be the reflection at the point $p\in \mathbb{H}^2$. 
The map $\tau = \sigma_{y_\lambda} \circ \,\sigma_{x_\lambda}$
translates the line $r$ containing the segment $x_\lambda y_\lambda$ by the quantity $2d(x_\lambda, y_\lambda)$:
hence it is a hyperbolic transformation with axis $r$. 
We call $z_i =\tau(x_i)$ and note that $z_1 = \sigma_{y_\lambda}(x_2)$, 
hence $d(x_2, y_2) = d(z_1, y_1)$.

\begin{figure}[h]
\begin{tikzpicture}[scale=0.75]
\draw[scale=3,domain=-1.1: 1.1,
smooth,variable=\t,shift={(0,0)},rotate=0]plot({1*sin(\t r)},
{1*cos(\t r)}) ;
\draw[scale=7.6,domain=0.4: 1,
smooth,variable=\t,shift={(-1,-0.34)},rotate=0]plot({1*sin(\t r)},
{1*cos(\t r)}) ;
\draw[scale=7.7,domain=-0.89: 0,
smooth,variable=\t,shift={(0.57,-0.43)},rotate=0]plot({1*sin(\t r)},
{1*cos(\t r)}) ;
\draw[scale=1,domain=-0.2: 0.9,
smooth,variable=\t,shift={(-2.38,-1.44)},rotate=0]plot({5*sin(\t r)},
{5*cos(\t r)}) ;
\draw[scale=3,domain=-1.3: -1.1,
smooth,variable=\t,shift={(0,0)},rotate=0,dashed]plot({1*sin(\t r)},
{1*cos(\t r)}) ;
\draw[scale=7.6,domain=0.43: 0.3,
smooth,variable=\t,shift={(-1,-0.34)},rotate=0,dashed]plot({1*sin(\t r)},
{1*cos(\t r)}) ;
\draw[scale=7.7,domain=-0.99: -0.89,
smooth,variable=\t,shift={(0.57,-0.43)},rotate=0,dashed]plot({1*sin(\t r)},
{1*cos(\t r)}) ;
\draw[scale=1,domain=-0.4: -0.2,
smooth,variable=\t,shift={(-2.38,-1.44)},rotate=0,dashed]plot({5*sin(\t r)},
{5*cos(\t r)}) ;
\draw[scale=3,domain=1.1: 1.3,
smooth,variable=\t,shift={(0,0)},rotate=0,dashed]plot({1*sin(\t r)},
{1*cos(\t r)}) ;
\draw[scale=7.6,domain=0.9: 1.1,
smooth,variable=\t,shift={(-1,-0.34)},rotate=0,dashed]plot({1*sin(\t r)},
{1*cos(\t r)}) ;
\draw[scale=7.7,domain=0: 0.2,
smooth,variable=\t,shift={(0.57,-0.43)},rotate=0,dashed]plot({1*sin(\t r)},
{1*cos(\t r)}) ;
\draw[scale=1,domain=0.9: 1.1,
smooth,variable=\t,shift={(-2.38,-1.44)},rotate=0,dashed]plot({5*sin(\t r)},
{5*cos(\t r)}) ;
\draw
(0,1.3) to[out= 90,in=-90, looseness=1](0,5.5);
\draw[dashed]
(0,5.5) to[out= 90,in=-90, looseness=1](0,6);
\draw[dashed]
(0,0.75) to[out= 90,in=-90, looseness=1](0,1.3);
\path
(1,2.2) node[circle,fill=black,scale=0.5] {}
(3.6,4.35) node[circle,fill=black,scale=0.5] {}
(-1.35,1.8) node[circle,fill=black,scale=0.5] {}
(-3,3.5) node[circle,fill=black,scale=0.5] {}
(1.85,2.35) node[circle,fill=black,scale=0.5] {}
(-1.85,2.35) node[circle,fill=black,scale=0.5] {}
(0,2.25) node[circle,fill=black,scale=0.5] {}
(0,3) node[circle,fill=black,scale=0.5] {}
(0,4.5) node[circle,fill=black,scale=0.5] {};
\path[font=\normalsize] 
(3.6,4.35) node[below] {$z_2$}
(1,2.2) node[below] {$z_1$}
(-3,3.5) node[above] {$x_1$}
(-1.35,1.8) node[left] {$x_2$}
(1.55,2.65) node[right] {$z_\lambda$}
(-1.55,2.65) node[left] {$x_\lambda$}
(0,4.7) node[right] {$y_1$}
(0,3.4) node[right] {$y_\lambda$}
(0,2.45) node[right] {$y_2$};
\end{tikzpicture}
\caption{}
\end{figure}

 The triangular inequality implies that
\begin{equation}\label{equno}
d(x_1, z_1)\leq d(x_1, y_1) + d(y_1, z_1) = d(x_1, y_1) + d(x_2, y_2)\,.
\end{equation}
We notice that the equality holds only if $x_1, y_1$ and $z_1$ belong all to the same line.

A hyperbolic transformation has minimum displacement on its axis $r$, hence
\begin{equation}\label{eqdue}
2d(x_\lambda, y_\lambda) = d(x_\lambda, z_\lambda) = d(x_\lambda, \tau(x_\lambda)) \leq d(x_1, \tau(x_1)) = d(x_1, z_1)\,,
\end{equation}
and the equality holds only if $x_1$ (and hence $x_2$) is in $r$.
Finally we get $d(x_\lambda, y_\lambda) \leq \frac12 (d(x_1, y_1) + d(x_2, y_2))$ and hence $d$ is convex.

Notice that the equality holds both in \eqref{equno} and in \eqref{eqdue}
only if $x_1,x_2,y_1,y_2$ belong to the same line $r$.
\end{proof}

\begin{theorem}\label{minima}
Minimal spines of closed surfaces of non positive constant curvature
are local minima for the length functional among spines, with 
respect to the Hausdorff distance.

If the curvature is negative, these are strict local minima.
\end{theorem}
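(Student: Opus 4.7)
The approach exploits two ingredients: first, that a minimal spine $\Gamma$ is already a critical point of the length functional (as observed right after Definition \ref{minimal:definition}, since geodesicity of edges and the $2\pi/3$ angle condition kill both terms in \eqref{1}); and second, the convexity of the distance function in non-positively curved simply connected spaces. Lemma \ref{convexity} provides this in $\matH^2$, and the Euclidean analog in $\matR^2$ is immediate; lifting to the universal cover of $S$ lets me apply it.

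The core argument is the following. Let $\Gamma'$ be a spine with $d_H(\Gamma',\Gamma)<\epsilon$, with $\epsilon$ small. First I replace every edge of $\Gamma'$ by the geodesic arc joining its endpoints; this is a length-non-increasing operation, so it suffices to treat the case in which $\Gamma'$ is a geodesic network. Assuming the resulting $\Gamma'$ has the same combinatorial type as $\Gamma$, with vertices $v_i'$ close to the vertices $v_i$ of $\Gamma$, I introduce the one-parameter family $\Gamma_t$ of geodesic networks whose vertices are the convex combinations $v_i(t)$ of $v_i$ and $v_i'$ in the sense of Definition \ref{convexcomb}, with edges taken as geodesic arcs between the moved vertices. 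Applying Lemma \ref{convexity} edge by edge shows that each $t\mapsto d(v_i(t),v_j(t))$ is convex, so $L(\Gamma_t)$ is a convex function on $[0,1]$. Since $\Gamma=\Gamma_0$ is a critical point of $L$, convexity forces $L(\Gamma_0)\leq L(\Gamma_1)=L(\Gamma')$.

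The main obstacle is the combinatorial matching: a Hausdorff-close spine $\Gamma'$ may a priori fail to be combinatorially equivalent to $\Gamma$, for instance by carrying valence-$\geq 4$ vertices or by having edges subdivided by valence-$2$ points. I would handle this by a local reduction that brings $\Gamma'$ to the combinatorial type dictated by Remark \ref{numberofvertices} without increasing length: valence-$2$ vertices are removed for free; a vertex of valence $\geq 4$ close to some $v_i$ can be broken into trivalent vertices by the planar Steiner construction in the exponential chart at $v_i$ (exactly as in Step~2 of the proof of Theorem~\ref{existencein2d}), strictly decreasing the length. Provided $\epsilon$ is small enough, all these modifications stay inside a small Hausdorff neighborhood of $\Gamma$ and the resulting spine satisfies the hypotheses of the previous paragraph. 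Finally, for the strict inequality in negative curvature, the equality case of Lemma \ref{convexity} requires all four endpoints to be collinear in $\matH^2$; for $\Gamma'\neq\Gamma$ this degeneracy cannot occur on every edge of the interpolation, so at least one edge contributes a strictly convex term to $L(\Gamma_t)$ and therefore $L(\Gamma)<L(\Gamma')$.
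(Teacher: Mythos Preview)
Your proposal is correct and follows essentially the same approach as the paper: replace the competitor by a geodesic network with the same vertices, interpolate the vertices via convex combinations (Definition~\ref{convexcomb}), invoke Lemma~\ref{convexity} edgewise to get convexity of $t\mapsto L(\Gamma_t)$, and combine this with criticality of $\Gamma$ at $t=0$. The paper runs this as a contradiction argument on a sequence and simply asserts that nearby spines have $k$ triple junctions, whereas you work directly with a single $\Gamma'$ and spell out the Steiner reduction to force the trivalent combinatorics; this is a welcome clarification but not a different method.
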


\begin{proof}
We prove the first statement by contradiction.

Consider a sequence of spines $\Gamma_n$ of the surface $S$
converging in the Hausdorff distance to a minimal spine 
$\Gamma$, such that $L(\Gamma_n)< L(\Gamma)$.
The minimal spine $\Gamma$ of $S$ is a network composed by geodesic arcs
joining $k$ triple junctions $x_1,\,_{\cdot\cdot\cdot}\,,\,x_k$, 
where the number $k$ depends only on the topology of $S$
(see Remark~\ref{numberofvertices}).
%Lifting the geodesics containing the arcs of $\Gamma_n$ and
%$\Gamma$ trough the universal covering, we get a
%sequence of $\Lambda$-invariant tessellations of the plane 
%by geodesic polygons with edges in pairs congruent
%(all these polygons are fundamental domains for the action of $\Lambda$).
For $n$ big enough, also $\Gamma_n$ have $k$
triple junctions $x_{1,n},\,_{\cdot\cdot\cdot}\,,\,x_{k,n}$.
Moreover, we can suppose that for $n$ big enough $\Gamma_n$ are composed only by geodesic segments.
Indeed if $\Gamma_n$ are not composed by geodesic segments, we can replace $\Gamma_n$ with 
$\widetilde{\Gamma}_n$, union of geodesic arcs, with the same triple junctions of $\Gamma_n$, 
and the value of the length functional decreases
$L(\widetilde{\Gamma}_n)\leq L(\Gamma_n)< L(\Gamma)$.

For $n$ big enough, and for every $\lambda\in [0,1]$ and $i\in\{1,\ldots k\}$, take the convex combination
$x_{i,n}^\lambda=(1-\lambda)x_i+\lambda x_{i,n}$
and define $\Gamma_n^\lambda$ as the spine
obtained by joining the points $x_{i,n}^\lambda$ with geodesic segments
in the same pattern of $\Gamma$.
We get a continuous family of spines
$\{\Gamma_n^\lambda\}_{\lambda\in[0,1]}$ such that
$\Gamma_n^1=\Gamma_n$ and $\Gamma_n^0=\Gamma$.
By Lemma~\ref{convexity}, the continuous function
$F_n(\lambda)=L(\Gamma_n^\lambda)$ is convex (convexity of the distance function is easily proved also in the Euclidean case)
and $F_n(1)\leq F_n(0)$.
We also have $F'_n(0)=0$ because the minimal spine
$\Gamma$ is a stationary point of the length functional.
This  implies that 
$L(\Gamma_n)=F_n(1)\geq F_n(0)=L(\Gamma)$
and we have a contradiction.

In the hyperbolic case, Lemma \ref{convexity} provides strict convexity and hence $\Gamma$ is a strict local minimum.
\end{proof}

\begin{remark}\rm{
It is not restrictive to consider local minima of the length functional only among spines
and not in the larger class of networks.
Indeed, if we take a smooth family 
${\Phi_t}$ of diffeomorphism of $S$ with $t\in \left[0,T\right]$
and a spine $\Gamma$ and we consider a small perturbation 
of $\Gamma$ via these diffeomorphisms, $\Gamma_t=\Phi_t\left(\Gamma\right)$
is still a spine for $t$ small enough.
In particular, in the proof of Theorem~\ref{minima}, 
we show that $L(\Gamma)\leq L(\Phi_t\left(\Gamma\right))$, for all ${\Phi_t}$ and for $t$ small enough.}
\end{remark}

\section{Flat tori}
In this section, we analyse minimal spines of the closed surface of genus $1$:
the torus $T=S^1 \times S^1$. In particular, we will fully determine all the minimal spines on $T$, endowed with any Euclidean metric.
\subsection{Minimal spines of Riemannian tori.}
From Remark~\ref{numberofvertices}, for any Riemannian metric on the torus $T$, minimal spines have exactly $2$ vertices and $3$ edges.
There are only two kinds of graph satisfying these properties:
the $\theta$-graph and the eyeglasses (see Figure \ref{graftriv}).

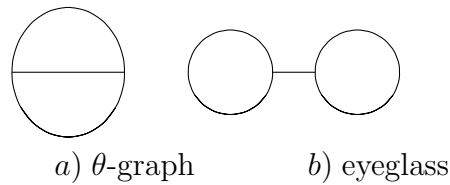
\begin{figure}[h]
\begin{tikzpicture}[scale=0.75]
\draw[color=black,scale=1,domain=-4.141: 4.141,
smooth,variable=\t,shift={(-1,0)},rotate=0]plot({1*sin(\t r)},
{1.15*cos(\t r)}) ;
\draw[color=black,scale=0.75,domain=-4.141: 4.141,
smooth,variable=\t,shift={(2.5,0)},rotate=0]plot({1*sin(\t r)},
{1*cos(\t r)}) ;
\draw[color=black,scale=0.75,domain=-4.141: 4.141,
smooth,variable=\t,shift={(5.5,0)},rotate=0]plot({1*sin(\t r)},
{1*cos(\t r)}) ;
\draw
(-2,0) to[out= 0,in=180, looseness=1] (0,0);
\draw[scale=0.75]
(3.5,0) to[out= 0,in=180, looseness=1] (4.5,0);
\path[font=\normalsize]
(0,-1.2) node[below]{$a)\;\theta\text{-graph}$}
(4.5,-1.2) node[below]{$b)\;\text{eyeglass}$};
\end{tikzpicture}
\caption{The two trivalent graphs with three edges.}
\label{graftriv}
\end{figure}

Both graphs can be embedded in a torus, but only the first in a way to obtain a spine.
Indeed, it is easy to find a $\theta$-spine in the torus (see Figure~\ref{graph}-$a$),
and actually infinitely many non isotopic ones.
Consider, instead, the eyeglasses as an abstract graph.
Thickening the interiors of its edges, we get three bands.
To have a spine on $T$, it remains to attach them, getting
a surface homeomorphic to $T-B^2$.
This is impossible.
The band corresponding to one of the two ``lenses'' of the eyeglasses
has to be glued to itself.
The are only two ways to do this: one would give
an unorientable surface (see Figure~\ref{graph}-$b$),
the other would have too many boundary components
(see Figure~\ref{graph}-$c$).

We have shown the following

\begin{figure}[h!]
\begin{center}

\begin{minipage}[c]{.4\textwidth}
\centering
\begin{center}
\begin{tikzpicture}[line cap=round,line join=round,>=triangle 45,x=1.0cm,y=1.0cm,scale=0.2]
\clip(-1.89,-7.73) rectangle (23.2,7.05);
\draw [shift={(6.11,0.1)},line width=0.6pt]  plot[domain=-0.48:4.8,variable=\t]({1*2.9*cos(\t r)+0*2.9*sin(\t r)},{0*2.9*cos(\t r)+1*2.9*sin(\t r)});
\draw [shift={(6.11,0.1)},line width=0.6pt]  plot[domain=-0.54:5.18,variable=\t]({1*5.67*cos(\t r)+0*5.67*sin(\t r)},{0*5.67*cos(\t r)+1*5.67*sin(\t r)});
\draw [shift={(11.25,0.1)},line width=0.6pt]  plot[domain=2.66:3.63,variable=\t]({1*2.9*cos(\t r)+0*2.9*sin(\t r)},{0*2.9*cos(\t r)+1*2.9*sin(\t r)});
\draw [shift={(11.25,0.1)},line width=0.6pt]  plot[domain=2.61:3.68,variable=\t]({1*5.67*cos(\t r)+0*5.67*sin(\t r)},{0*5.67*cos(\t r)+1*5.67*sin(\t r)});
\draw [shift={(11.25,0.1)},line width=0.6pt]  plot[domain=-2.04:2.04,variable=\t]({1*5.67*cos(\t r)+0*5.67*sin(\t r)},{0*5.67*cos(\t r)+1*5.67*sin(\t r)});
\draw [shift={(11.25,0.1)},line width=0.6pt]  plot[domain=-1.66:1.66,variable=\t]({1*2.9*cos(\t r)+0*2.9*sin(\t r)},{0*2.9*cos(\t r)+1*2.9*sin(\t r)});
\draw [line width=1.4pt] (7.91,-2.55)-- (9.36,-2.57);
\draw [line width=1.4pt] (7.91,-2.55)-- (6.76,-4.05);
\draw [line width=1.4pt] (9.36,-2.57)-- (10.42,-4.08);
\draw [shift={(6.11,0.1)},line width=1.4pt]  plot[domain=-0.69:4.87,variable=\t]({1*4.2*cos(\t r)+0*4.2*sin(\t r)},{0*4.2*cos(\t r)+1*4.2*sin(\t r)});
\draw [shift={(11.25,0.1)},line width=1.4pt]  plot[domain=2.54:3.81,variable=\t]({1*4.26*cos(\t r)+0*4.26*sin(\t r)},{0*4.26*cos(\t r)+1*4.26*sin(\t r)});
\draw [shift={(11.25,0.1)},line width=1.4pt]  plot[domain=-1.77:1.86,variable=\t]({1*4.26*cos(\t r)+0*4.26*sin(\t r)},{0*4.26*cos(\t r)+1*4.26*sin(\t r)});
\begin{scriptsize}
\fill [color=black] (9.36,-2.57) circle (2.0pt);
\fill [color=black] (7.91,-2.55) circle (2.0pt);
\end{scriptsize}
\path[font=\normalsize]
(-1,-2.5) node[below]{$a)$};
\end{tikzpicture}
\end{center}
\end{minipage}
\hspace{10mm}
\begin{minipage}[c]{.4\textwidth}
\centering
\begin{center}
\begin{tikzpicture}[rotate=-90,scale=0.9]
\draw[color=black,scale=0.75,domain=-4.141: 4.141,
smooth,variable=\t,shift={(2.5,0)},rotate=0]plot({1*sin(\t r)},
{1*cos(\t r)}) ;
\draw[scale=0.75]
(3.5,0) to[out= 0,in=180, looseness=1] (4,0);
\draw[scale=0.75,dashed]
(4,0) to[out= 0,in=180, looseness=1] (4.7,0);
%cerchio dentro
\draw[color=black,scale=0.5,domain=-2.5: 3.2,
smooth,variable=\t,shift={(3.75,0)},rotate=0,thick]plot({1*sin(\t r)},
{1*cos(\t r)}) ;
%cerchio fuori
\draw[color=black,domain=-4.51: -3,
smooth,variable=\t,shift={(1.85,0)},rotate=0,thick]plot({1*sin(\t r)},
{1*cos(\t r)}) ;
\draw[color=black,domain=-2.5: 1.41,
smooth,variable=\t,shift={(1.85,0)},rotate=0,thick]plot({1*sin(\t r)},
{1*cos(\t r)}) ;
\draw[scale=0.75,thick]
(1.6,-1) to[out= -50,in=180, looseness=1] (2.5,-0.65);
\draw[scale=0.75,thick]
(2.3,-1.32) to[out=150,in=60, looseness=1] (2.15,-1.05);
\draw[scale=0.75,thick]
(2.15,-0.7) to[out=150,in=-30, looseness=1] (2.1,-0.53);
%righe dritte
\draw[scale=0.75,thick]
(3.75,0.2) to[out= 0,in=180, looseness=1] (4.7,0.2);
\draw[scale=0.75,thick]
(3.75,-0.2) to[out= 0,in=180, looseness=1] (4.7,-0.2);
\path[font=\normalsize]
(2.75,-1.5) node[below]{$b)$}
(2.75,1.25) node[below]{$c)$};

\draw[color=black,scale=0.75,domain=-4.141: 4.141,
smooth,variable=\t,shift={(2.5,3.5)},rotate=0]plot({1*sin(\t r)},
{1*cos(\t r)}) ;
\draw[scale=0.75,shift={(0,3.5)}]
(3.5,0) to[out= 0,in=180, looseness=1] (4,0);
\draw[scale=0.75,dashed,shift={(0,3.5)}]
(4,0) to[out= 0,in=180, looseness=1] (4.7,0);
%cerchio dentro
\draw[color=black,scale=0.5,domain=-4.141: 4.141,
smooth,variable=\t,shift={(3.75,5.25)},rotate=0,thick]plot({1*sin(\t r)},
{1*cos(\t r)}) ;
%cerchio fuori
\draw[color=black,domain=-4.51: 1.41,
smooth,variable=\t,shift={(1.85,2.65)},rotate=0,thick]plot({1*sin(\t r)},
{1*cos(\t r)}) ;
%righe dritte
\draw[scale=0.75,shift={(0,3.5)},thick]
(3.75,0.2) to[out= 0,in=180, looseness=1] (4.7,0.2);
\draw[scale=0.75,shift={(0,3.5)},thick]
(3.75,-0.2) to[out= 0,in=180, looseness=1] (4.7,-0.2);
\end{tikzpicture}
\end{center}
\end{minipage}
\end{center}
\caption{$a)$ A regular neighbourhood of a $\theta$-spine on $T$. $b)-c)$ An eyeglasses spine on $T$ does not exist.}
\label{graph}
\end{figure}
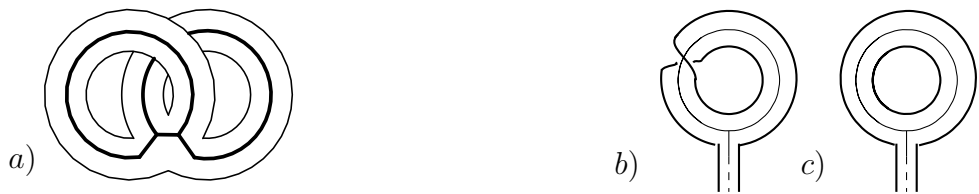

\begin{proposition}
Trivalent spines on the torus are $\theta$-graphs.
\end{proposition}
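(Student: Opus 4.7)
The plan is to combine the topological restriction from Remark~\ref{numberofvertices} with a short combinatorial enumeration. Since a trivalent spine $\Gamma\subset T$ satisfies $v-e = \chi(T)-1 = -1$ together with $3v=2e$, it has exactly $v=2$ vertices and $e=3$ edges; up to isomorphism the only two connected trivalent graphs with those parameters are the $\theta$-graph and the eyeglasses (two loops joined by a single bridge). A $\theta$-spine on $T$ is explicit in Figure~\ref{graph}-$a$, so it suffices to rule out the eyeglasses.

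I would argue by contradiction: suppose the eyeglasses graph $\Gamma$ embeds in $T$ as a spine. Then a regular neighbourhood $N\subset T$ of $\Gamma$ is homeomorphic to $T$ minus an open disc, hence is an \emph{orientable} surface of genus $1$ with \emph{exactly one} boundary component. Since $T$ is orientable, the band obtained by thickening each of the two loops of $\Gamma$ must be an annulus rather than a M\"obius band; this already rules out the configuration sketched in Figure~\ref{graph}-$b$. It remains to show that in this orientable case $N$ necessarily has more than one boundary component.

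For this step I would use a face-tracing (ribbon-graph) argument. A regular neighbourhood of a finite graph inside an orientable surface is determined, up to homeomorphism, by a cyclic ordering of the three half-edges at each trivalent vertex, and its boundary components are in bijection with the cycles of the permutation $\phi=\sigma\circ\iota$, where $\iota$ is the edge involution and $\sigma$ rotates half-edges counterclockwise at each vertex. The graph automorphism that swaps the two ends of each loop identifies the two available cyclic orders at a given vertex, so essentially only one orientable thickening has to be analysed. A direct enumeration of the six half-edges of the eyeglasses then produces exactly three cycles of $\phi$: one ``outer'' cycle that traverses the bridge together with one side of each loop, and two ``inner'' cycles, one parallel to each loop. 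Therefore $N$ is a pair of pants with $b=3$ boundary circles, in contradiction with the requirement $b=1$ coming from the spine hypothesis; this matches the picture in Figure~\ref{graph}-$c$.

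The main obstacle is really only the face-tracing step, but it is short and mechanical given the small size of the graph. An alternative, purely elementary, finish is to combine the Euler-characteristic identity $\chi(N)=v-e=-1=2-2g-b$ (which forces $(g,b)\in\{(1,1),(0,3)\}$) with the observation that each of the two loops, being thickened to an annulus inside the orientable $N$, already contributes a parallel ``inner'' boundary circle of $N$; this forces $b\geq 2$ and hence $(g,b)=(0,3)$, again contradicting $b=1$.
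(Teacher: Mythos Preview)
Your proof is correct and follows essentially the same strategy as the paper: reduce to the two candidate trivalent graphs via the Euler-characteristic count, exhibit the $\theta$-spine, and rule out the eyeglasses by showing that no thickening of it is homeomorphic to $T\setminus B^2$ (one possibility being non-orientable, the other having too many boundary components). Your ribbon-graph/face-tracing formalism and the Euler-characteristic alternative simply make rigorous what the paper conveys through Figures~\ref{graph}-$b$ and~\ref{graph}-$c$.
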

Combined with Theorem~\ref{existencein2d}, we get the following
\begin{theorem}
On every Riemannian torus,
minimal spines (exist and)
are embedded $\theta$-graphs
with geodesic arcs, forming angles of $\frac{2\pi}{3}$
at their two meeting points.
\end{theorem}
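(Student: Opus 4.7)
The plan is to assemble three ingredients already in hand: Theorem \ref{existencein2d}, Remark \ref{numberofvertices}, and the preceding Proposition classifying trivalent spines on $T$. Existence is for free: since $T$ is neither a sphere nor a projective plane, Theorem \ref{existencein2d} gives a spine $\Gamma$ of minimal length, and this spine falls into case (3), i.e.\ it consists of finitely many geodesic arcs meeting at trivalent vertices with angles $\tfrac{2\pi}{3}$. So the only real content is to check that the underlying combinatorial graph is a $\theta$-graph.

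Next I would apply Remark \ref{numberofvertices} to pin down the combinatorics. Since $\chi(T) = 0$, the Euler characteristic equation \eqref{Eulerchar} gives $v - e = \chi(T) - 1 = -1$, and trivalency gives $3v = 2e$. Solving yields $v = 2$ and $e = 3$. Thus every minimal spine on a Riemannian torus is a trivalent graph with exactly two vertices and three edges.

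Finally, I invoke the preceding Proposition: the only trivalent graph with two vertices and three edges that embeds in $T$ as a spine is the $\theta$-graph (the eyeglasses graph being ruled out by the band-attachment argument of Figure \ref{graph}). Combining these three observations gives the theorem: on every Riemannian torus a minimal spine exists, is an embedded $\theta$-graph, and its two edges at each of the two trivalent vertices are geodesic arcs meeting at angle $\tfrac{2\pi}{3}$. There is no real obstacle here; the statement is essentially a corollary once Theorem \ref{existencein2d}, Remark \ref{numberofvertices}, and the classification of trivalent spines are in place.
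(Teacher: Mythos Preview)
Your proposal is correct and follows exactly the paper's approach: the theorem is stated there as an immediate corollary of Theorem~\ref{existencein2d} combined with the preceding Proposition (trivalent spines on $T$ are $\theta$-graphs), with Remark~\ref{numberofvertices} supplying the vertex and edge count $v=2$, $e=3$. One tiny slip in your final sentence: each trivalent vertex has three incident edges, not two.
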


\subsection{Basics on flat tori.}
From now, we consider only constant curvature Riemannian metrics on $T$.
These correspond to flat (i.e. Euclidean) structures on $T$.
We make a very rough introduction about
the basic concepts of the topic to fix the notations,
referring the reader to \cite{Fm} for details.

Let us define
\begin{itemize}
\item the \emph{Teichm\"uller space} $\mathcal{T}$ 
of the torus as the space of isotopy classes
of unit-area flat structures
on $T$,
\item the \emph{mapping class group} $\MCG$ of the torus as
the group of isotopy classes of orientation 
preserving self-diffeomorphisms of $T$.
\item the \emph{moduli space} $\mathcal{M}$ of the torus   
 as the space of oriented isometry classes of unit-area flat structures
on $T$.
\end{itemize}

The group $\MCG$ acts on the set $\mathcal{T}$ and $\mathcal{M}$ is the quotient by this action. The set $\mathcal{T}$ is endowed with a natural metric (the Teichm\"uller metric), which makes it isometric to hyperbolic plane $\matH^2$. The action of the mapping class group can be identified with a discrete action by isometries. 

Visualizing the hyperbolic plane with the upper-half space model $\{\Im(z)>0\}\subset\mathbb{C}$, the action is given by integer M\"obius transformations and the mapping class group is isomorphic to the group $SL_2(\mathbb{Z})$ of unit-determinant $2\times 2$ integral matrices. The kernel of the action is $\{\pm I\}$, so that the moduli space $\mathcal{M}$ has the structure of a hyperbolic orbifold, with $\pi_1\mathcal{M}\simeq {\rm PSL}_2(\mathbb{Z})$.
A fundamental domain for the action is the hyperbolic semi-ideal triangle 
$$D=\big\{\vert z\vert\geq 1, \vert \Re (z)\vert \leq 1\big\},$$ 
with angles $\frac{\pi}{3}$, $\frac{\pi}{3}$ and $0$ (in grey in Figure \ref{flattori}).
The quotient $\mathcal M$ is the complete $(2,3,\infty)$-hyperbolic orbifold of finite area, with one cusp and two conical singularities of angles $\pi$ and $\frac{2\pi}{3}$. We see $\mathcal M$ as usual as $D$ with the boundary curves appropriately identified.

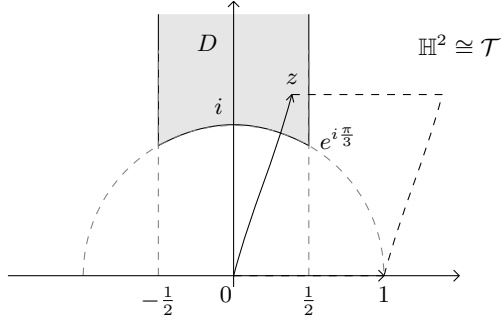
\begin{figure}
\begin{center}
\begin{tikzpicture}[scale=0.5]
\filldraw[fill=black!10!white]
(2,3.46) 
to[out= 90,in=-90, looseness=1] (2,7) -- (2,7)
to[out=-180, in=0, looseness=1] (-2, 7) --
(-2,7)to[out= -90,in=90, looseness=1] (-2,3.46) -- (-2,3.46) to[out=28.5, in=151.5, looseness=1]  (2,3.46);
\draw[color=white, very thick]
(-6,7) to[out= 0,in=180, looseness=1] (6,7);
\draw[dashed, color=black!50!white,scale=4,domain=-1.57: 1.57,
smooth,variable=\t,shift={(0,0)},rotate=0]plot({1*sin(\t r)},
{1*cos(\t r)}) ;
\draw[dashed, color=black!50!white]
(2,0) to[out=90, in=-90, looseness=1](2,6)
(-2,0) to[out=90, in=-90, looseness=1](-2,6);
\draw
(1.6,4.61) to[out= 105,in=-75, looseness=1] (1.55,4.82)
(1.4,4.67) to[out= 45,in=-135, looseness=1] (1.55,4.82);
%parallelogramma
\draw
(0,0) to[out= 75,in=-105, looseness=1] (1.55,4.82);
\draw[dashed] 
(0,0) to[out= 0,in=180, looseness=1] (4,0)
(1.55,4.82) to[out= 0,in=180, looseness=1](5.55,4.82)
(4,0) to[out= 75,in=-105, looseness=1](5.55,4.82);
%identificazioni toro
\draw
(4,0) to[out= 135,in=-45, looseness=1] (3.85,0.15)
(4,0) to[out= -135,in=45, looseness=1] (3.85,-0.15);
%(4.05,4.82) to[out= 135,in=-45, looseness=1] (3.9,4.97)
%(4.05,4.82) to[out=-135,in=45, looseness=1] (3.9,4.67)
%(0.775,2.41) to[out= -60,in=120, looseness=1] (0.88,2.2271)
%(0.775,2.41) to[out=-150,in=30, looseness=1] (0.59,2.3)
%(4.775,2.41) to[out= -60,in=120, looseness=1] (4.88,2.2271)
%(4.775,2.41) to[out=-150,in=30, looseness=1] (4.59,2.3);
%\draw[color=black!50!white,scale=0.25,domain=0.3: 1.57,
%smooth,variable=\t,shift={(0,0)},rotate=0]plot({1*sin(\t r)},
%{1*cos(\t r)}) ;
\path[font=\scriptsize]
(0,4.5)node[left]{$i$}
(2,3.66)node[right]{$e^{i\frac{\pi}{3}}$}
(-0.2,0) node[below]{$0$}
(4,0) node[below]{$1$}
(2,0) node[below]{$\frac 12$}
(-2,0) node[below]{$-\frac {1}{2}$}
(1.55,4.82) node[above] {$z$}
(-0.7,6.7) node[below]{$D$}
(6,6.7) node[below]{$\mathbb{H}^2\cong \mathcal{T}$};
\draw
(6,0) to[out= 135,in=-45, looseness=1] (5.85,0.15)
(6,0) to[out= -135,in=45, looseness=1] (5.85,-0.15)
(0,7.3) to[out= -45,in=135, looseness=1] (0.15,7.15)
(0,7.3) to[out= 45,in=-135, looseness=1] (-0.15,7.15)
(0,-0.3) to[out= 90,in=-90, looseness=1] (0,7.3)
(-6,0) to[out= 0,in=-180, looseness=1] (6,0);
 \end{tikzpicture}
\end{center}
\caption{Every $z\in\mathbb H^2$ in upper half-plane represents a flat torus obtained by identifying the opposite edges of the parallelogram with vertices 0, 1, $z$, $z+1$. A fundamental domain $D$ for the action of ${\rm PSL}_2(\mathbb Z)$ is colored in grey.}
\label{flattori}
\end{figure}

We will also be interested in the following space:
\begin{itemize}
\item the \emph{non-oriented moduli space} $\Mnon$ is the space of all isometry classes of (unoriented) unit-area flat structures on $T$.
\end{itemize}

We have $\Mnon = \mathcal{M}/_\iota$ where $\iota$ is the isometric involution that sends an oriented flat torus to the same torus with opposite orientation: the lift of $\iota$ to the fundamental domain $D$ is the reflection with respect to the geodesic line $i\mathbb{R}^+$ and $\Mnon$ is the hyperbolic triangle orbifold 
$$\Mnon = D\cap\{\Re(z)\geq 0\}$$ 
with angles $\frac{\pi}{2}$, $\frac{\pi}{3}$ and $0$.
We do not need to fix an orientation on $T$ to talk about spines, hence we will work mainly with $\Mnon$.

Every flat torus $T$ has a continuum of isometries: the \emph{translations} by any vector in $\matR^2$ and the \emph{reflections} with respect to any point $x\in T$. The tori lying in the mirror sides of $\Mnon$ have special names and enjoy some additional isometries:
\begin{itemize}
\item the \emph{rectangular tori} are those lying in $i\mathbb{R}^+$,
\item the \emph{rhombic tori} are those in the other sides of $\Mnon$, namely $\{\vert z\vert=1\} \cup \{\Re(z)=\frac12 \}$.
\end{itemize}
These flat tori are obtained by identifying the opposite sides of a rectangle and a rhombus, respectively. The tori in the cone points $z=i$ and $e^{\frac{\pi i}{3}}$ are the \emph{square torus} and the \emph{hexagonal torus}. On the hexagonal torus, the length $d$ of the shortest diagonal of the rhombus equals the length $l$ of any of its sides, while we have $d\geq l$ and $d\leq l$ on the sides $|z|=1$ and $\Re = \frac 12$ respectively (we can call these rhombi \emph{fat} and \emph{thin}, repsectively).

The rectangular and rhombic tori are precisely the flat tori that admit orientation-reversing isometries.

Teichm\"uller and moduli spaces have natural Thurston and Mumford-Deligne compactifications. In the torus case, these are obtained respectively by adding the circle ``at infinity'' $\partial \mathbb H^2 = \mathbb{R}\cup\{\infty\}$ to $\mathcal T = \matH^2$ and a single point to $\mathcal M$ or $\Mnon$. We denote the latter compactifications by $\overline \calM$ and $\overline \Mnon$.

\subsection{Hexagons.}\label{minspi}
The study of minimal spines on a flat torus is intimately related to that of a particular class of Euclidean hexagons.

\begin{definition}
A \emph{semi-regular hexagon} is a Euclidean hexagon with all internal angles $\frac{2\pi}3$ and with congruent opposite sides. 
\end{definition}

We define the \emph{moduli space} $\calH$ as the space of all oriented semi-regular hexagons considered up to homotheties and orientation-preserving isometries. Similarly $\Hnon$ is defined by considering non-oriented hexagons and by quotienting by homotheties and all isometries. We get a map $\calH \to \Hnon$ that is at most 2-to-1.

Two opposite sides of a semi-regular hexagon are parallel and congruent. A semi-regular hexagon is determined up to isometry by the lengths $a,b,c >0$ of three successive sides, hence we get
\begin{align*}
\calH & = \big\{(a,b,c)\,|\,a,b,c>0\big\}/_{\R_{>0} \times A_3} \\
\Hnon & = \big\{(a,b,c)\,|\,a,b,c>0\big\}/_{\R_{>0} \times S_3}
\end{align*}
where the multiplicative group of positive real numbers $\matR_{>0}$ acts on the triples by rescaling, while $A_3$ and $S_3$ act by permuting the components. 

We can visualize the space $\Hnon$ in the positive orthant of $\matR^3$ by normalizing $(a,b,c)$ such that $a+b+c=1$ and $a\geq b\geq c$, and in this way $\Hnon$ is a triangle with one side removed (corresponding to $c=0$) as in Fig.~\ref{hexa:fig}. The other two sides parametrize the hexagons with $a=b\geq c$ and $a\geq b=c$. The regular hexagon is of course $(\frac 13, \frac 13, \frac 13)$. The space $\Hnon$ is naturally an orbifold with mirror boundary made by two half-lines and one corner reflector of angle $\frac \pi 3$.

The space $\H$ of oriented semi-regular hexagons is obtained by doubling the triangle $\Hnon$ along its two edges. Therefore $\H$ is topologically an open disc, and can be seen as an orbifold where the regular hexagon is a cone point of angle $\frac {2\pi}3$. The map $\H \to \Hnon$ may be interpreted as an orbifold cover of degree two.

The orbifold universal covering $\tilde{\mathcal{H}}\to\mathcal{H}$ is homeomorphic to the map $z\mapsto z^3$ from the complex plane to itself. The orbifold fundamental group $\pi_1\mathcal{H}$ is isomorphic to the cyclic group $\mathbb{Z}/3$, acting on the plane by rotations of angle $\frac{2\pi}{3}$.

\begin{center}
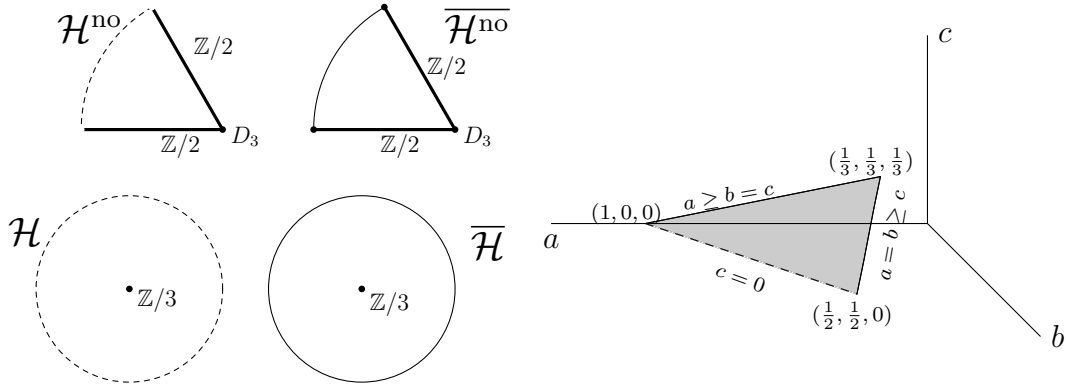
\begin{figure}[h]

\definecolor{ffffff}{rgb}{1,1,1}
\begin{tikzpicture}[line cap=round,line join=round,>=triangle 45,x=1.0cm,y=1.0cm,scale=0.5]

\filldraw[fill=black!20!white,scale=2.5,shift={(9,-1)}]
(-0.75,-0.75)to[out=161.57,in=-18.43,looseness=1](-3,0)--
(-3,0)to[out=11.3,in=-168.69,looseness=1](-0.5,0.5)--
(-0.5,0.5)to[out=-102,in=78,looseness=1](-0.75,-0.75);
\draw[scale=2.5,shift={(9,-1)}]
(0,0) to[out= 90,in=-90, looseness=1] (0,2) node[right]{$c$}
(0,0)to[out=180,in=0, looseness=1](-4,0) node[below]{$a$}
(0,0)to[out=-45,in=135,looseness=1](1.2,-1.2) node[right]{$b$} ;
\draw[color=white,scale=2.5,shift={(9,-1)}]
(-3,0)to[out=-18.43,in=161.57,looseness=1](-0.75,-0.75);
\draw[scale=2.5,shift={(9,-1)}]
(-3,0)to[out=11.3,in=-168.69,looseness=1](-0.5,0.5)
(-0.5,0.5)to[out=-102,in=78,looseness=1](-0.75,-0.75);
\draw[dashed,scale=2.5,shift={(9,-1)}]
(-3,0)to[out=-18.43,in=161.57,looseness=1](-0.75,-0.75);

\path[font=\tiny]
(17.3,-2.3) node[above,rotate=10]{$a\geq b=c$}
(17.7,-3.5) node[below,rotate=-20]{$c=0$}
(21.3,-4.3) node[right,rotate=80]{$a=b\geq c$}
(14.5,-2.8) node[above]{$(1,0,0)$}
(21,-1.5) node[above]{$(\frac13,\frac13,\frac13)$}
(20.5,-4.3) node[below]{$(\frac12,\frac12,0)$};

%sopra
\clip(-6.04,-7.7) rectangle (17.12,4.68);
%sopra sinistra
\draw [line width=1.2pt] (0,0)-- (3.76,0);
\draw [line width=1.2pt] (3.76,0)-- (1.88,3.26);
\draw [shift={(3.76,0)},dash pattern=on 2pt off 2pt]  plot[domain=2.09:3.14,variable=\t]({1*3.76*cos(\t r)+0*3.76*sin(\t r)},{0*3.76*cos(\t r)+1*3.76*sin(\t r)});
\draw (3.76,0.3) node[scale=0.75,anchor=north west] {$ D_3 $};
\draw (2.78,2.7) node[scale=0.75,anchor=north west] {$ \mathbb{Z}/2 $};
\draw (1.9,0.1) node[scale=0.75,anchor=north west] {$ \mathbb{Z}/2 $};
\draw (-1,3.42) node[scale=1.25,anchor=north west] {$ \mathcal{H}^\mathrm{no} $};
\begin{scriptsize}
\fill [color=ffffff] (0,0) circle (2.5pt);
\fill [color=ffffff] (1.88,3.26) circle (2.5pt);
\fill [color=black] (3.76,0) circle (2.5pt);
\end{scriptsize}
%sopra destra
\draw [line width=1.2pt] (9.94,0)-- (6.18,0);
\draw [line width=1.2pt] (9.94,0)-- (8.06,3.26);
\draw [shift={(9.94,0)}] plot[domain=2.09:3.14,variable=\t]({1*3.76*cos(\t r)+0*3.76*sin(\t r)},{0*3.76*cos(\t r)+1*3.76*sin(\t r)});
\draw (9.94,0.3) node[scale=0.75,anchor=north west] {$ D_3 $};
\draw (8.98,2.18) node[scale=0.75,anchor=north west] {$ \mathbb{Z}/2 $};
\draw (7.74,0.1) node[scale=0.75,anchor=north west] {$ \mathbb{Z}/2 $};
\draw (9.3,3.62) node[scale=1.25,anchor=north west] {$ \overline{\mathcal{H}^\mathrm{no}} $};
\begin{scriptsize}
\fill [color=black] (9.94,0) circle (2.5pt);
\fill [color=black] (6.18,0) circle (2.5pt);
\fill [color=black] (8.06,3.26) circle (2.5pt);
\end{scriptsize}
%sotto sinistra
\draw [dash pattern=on 2pt off 2pt] (1.28,-4.24) circle (2.48cm);
\draw (1.28,-4) node[scale=0.75,anchor=north west] {$ \mathbb{Z}/3 $};
\draw (7.46,-4.1) node[scale=0.75,anchor=north west] {$ \mathbb{Z}/3 $};
\draw (-2.3,-1.98) node[scale=1.25,anchor=north west] {$ \mathcal{H} $};
\begin{scriptsize}
\fill [color=black] (1.28,-4.24) circle (2.5pt);
\end{scriptsize}
%sotto destra
\draw(7.46,-4.24) circle (2.48cm);
\draw (10,-2.16) node[scale=1.25,anchor=north west] {$ \overline{\mathcal{H}} $};
\begin{scriptsize}
\fill [color=black] (7.46,-4.24) circle (2.5pt);
\end{scriptsize}
\end{tikzpicture}

\caption{On the left, the orbifold structures on the moduli spaces of semi-regular hexagons, and their compactifications.
On the right, a parametrization of $\Hnon$ by a triangle with one side removed in $\mathbb{R}^3$.}
\label{hexa:fig}
\end{figure}
\end{center}

Both $\calH$ and $\Hnon$ have natural compactifications, obtained by adding the side with $c=0$, which consists of points $(a,1-a,0)$ with $a \in [\frac 12, 1]$. Each such point corresponds to some ``degenerate'' hexagon: the points with $a<1$ may be interpreted as parallelograms with angles $\frac{\pi}3$ and $\frac{2\pi}3$, while $(1,0,0)$ should be interpreted as a segment  -- a \emph{doubly degenerate} hexagon. The underlying spaces of the resulting compactifications $\overline{\calH}$ and $\overline{\Hnon}$ are both homeomorphic to closed discs.

\subsection{Spines.}
We now introduce two more moduli spaces $\calS$ and $\Snon$ which will turn out to be isomorphic to $\calH$ and $\Hnon$.

Let the \emph{moduli space} $\calS$ be the set of all pairs $(T, \Gamma)$, where $T$ is a flat oriented torus and $\Gamma \subset T$ a minimal spine, considered up to orientation-preserving isometries (that is, $(T,\Gamma) = (T', \Gamma')$ if there is an orientation-preserving isometry $\psi\colon T \to T'$ such that $\psi(\Gamma) = \Gamma'$). We define analogously $\Snon$ as the set of all pairs $(T,\Gamma)$ where $T$ is unoriented, quotiented by all isometries. Again we get a degree two orbifold covering $\calS \to \Snon$.

The opposite edges of a semi-regular hexagon $H$ are congruent, and by identifying them we get a flat torus $T$. The boundary $\partial H$ of the hexagon transforms into a minimal spine $\Gamma \subset T$ in the gluing process. This simple operation define two maps
$$\calH \longrightarrow \calS, \qquad \Hnon \longrightarrow \Snon.$$

\begin{proposition}
Both maps are bijections.
\end{proposition}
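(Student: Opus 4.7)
The plan is to show bijectivity by producing an explicit inverse: cutting a torus $T$ open along its minimal spine $\Gamma$ to recover a semi-regular hexagon $H$. The arguments for $\calH\to\calS$ and $\Hnon\to\Snon$ are identical modulo orientation bookkeeping, so I will describe the oriented case. I proceed in three steps: well-definedness of the forward map, surjectivity, and injectivity.

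For well-definedness, starting with a semi-regular hexagon $H$ with consecutive side lengths $a,b,c,a,b,c$, the identification of opposite sides by the orientation-compatible translations yields a flat torus $T$. The six vertices of $H$ collapse into two equivalence classes $\{p_1,p_3,p_5\}$ and $\{p_2,p_4,p_6\}$, as one checks by tracking the side identifications, and the six sides collapse into three edges, producing a $\theta$-graph $\Gamma\subset T$. At each vertex of $\Gamma$, three sectors of angle $\frac{2\pi}{3}$ coming from corners of $H$ meet and sum to $2\pi$, so $\Gamma$ consists of geodesic arcs meeting at angle $\frac{2\pi}{3}$, i.e., it is minimal in the sense of Definition \ref{minimal:definition}. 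Its complement is the interior of $H$, an open disc, so $\Gamma$ is a spine. Rescaling $H$ by a homothety rescales $T$ by the same factor, so the map descends to moduli space.

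For surjectivity, take $(T,\Gamma)\in\calS$. By the preceding proposition $\Gamma$ is a $\theta$-graph with two trivalent vertices joined by three geodesic edges meeting at angle $\frac{2\pi}{3}$. The metric completion $H$ of the open flat disc $T\setminus\Gamma$ is a flat Euclidean polygon, whose boundary covers $\Gamma$ as a $2$-to-$1$ map on edge interiors. This gives $H$ six sides in three congruent pairs (the two lifts of each edge of $\Gamma$) and six vertices, each with interior angle $\frac{2\pi}{3}$; in particular, opposite sides of $H$ are automatically parallel. The main obstacle is to show that the three congruent pairs coincide with the three pairs of opposite sides, i.e., that the cyclic gluing word along $\partial H$ is $e_1 e_2 e_3 e_1^{-1} e_2^{-1} e_3^{-1}$. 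I would verify this by enumerating the finitely many rotation systems on the two trivalent vertices of $\Gamma$ and ruling out all alternatives via Euler's formula, which forces a single $2$-cell, combined with orientability of $T$. Once this combinatorial fact is in hand, $H$ is a semi-regular hexagon whose image is $(T,\Gamma)$.

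For injectivity, any orientation-preserving isometry $\psi\colon(T_1,\Gamma_1)\to(T_2,\Gamma_2)$ restricts to an isometry of the flat disc complements $T_i\setminus\Gamma_i$, and extends continuously to their metric completions, yielding an orientation-preserving isometry $H_1\to H_2$ of semi-regular hexagons. Hence the hexagons coincide in $\calH$. The one genuinely delicate point throughout is the combinatorial identification of the gluing pattern in the surjectivity step; everything else is routine bookkeeping about Euclidean polygons and their isometries.
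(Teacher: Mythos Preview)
Your proposal is correct and follows the same strategy as the paper: the inverse is obtained by cutting $T$ along $\Gamma$ to recover the hexagon. The paper's proof is a single sentence to this effect, while you have (correctly) unpacked the details, including the one genuinely non-trivial point the paper leaves implicit, namely that the boundary word of the cut-open disc is $e_1e_2e_3e_1^{-1}e_2^{-1}e_3^{-1}$; your proposed verification via rotation systems and Euler's formula is the standard way to see this.
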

\begin{proof}
The inverse map is the following: given $(T, \Gamma)$, we cut $T$ along $\Gamma$ and get the original semi-regular hexagon $H$.
\end{proof}

We will therefore henceforth identify these moduli spaces and use the symbols $\calH$ and $\Hnon$ to denote the moduli spaces of both semi-regular hexagons and pairs $(T,\Gamma)$. Of course our aim is to use the first (hexagons) to study the second (spines in flat tori).

\subsection{The forgetful maps.}
The main object of this chapter is the characterization of the forgetful maps 
$$p\colon \calS \longrightarrow \calM, \qquad p\colon \Snon \longrightarrow \Mnon$$
that send $(T, \Gamma)$ to $T$ forgetting the minimal spine $\Gamma$. The fiber $p^{-1}(T)$ over an oriented flat torus $T\in \calM$ can be interpreted as the set of all minimal spines in $T$, considered up to orientation-preserving isometries of $T$. Likewise the fiber over an unoriented torus $T\in \Mnon$ is the set of minimal spines in $T$, considered up to all isometries.

\begin{figure}[h]
\begin{center}
\begin{tikzpicture}[line cap=round,line join=round,>=triangle 45,x=1.0cm,y=1.0cm,scale=0.65]
\clip(-0.04,-0.36) rectangle (13.6,6.63);
\draw [line width=1pt] (0.18,1.14)-- (1.48,0);
\draw [line width=1pt] (1.48,0)-- (2.47,0.34);
\draw [line width=1pt] (0.18,1.14)-- (1.11,5.8);
\draw [line width=1pt] (1.11,5.8)-- (2.1,6.14);
\draw [line width=1pt] (2.1,6.14)-- (3.4,5);
\draw [line width=1pt] (3.4,5)-- (2.47,0.34);
\draw [dash pattern=on 1pt off 1pt] (6.18,5.8)-- (8.46,5);
\draw [line width=1pt] (5.25,1.14)-- (6.18,5.8);
\draw [line width=1pt] (5.73,3.55) -- (5.81,3.45);
\draw [line width=1pt] (5.73,3.55) -- (5.62,3.49);
\draw [dash pattern=on 1pt off 1pt] (7.16,6.14)-- (7.04,5.5);
\draw [line width=1pt] (10.95,5.8)-- (13.24,5);
\draw [line width=1pt] (12.17,5.37) -- (12.06,5.31);
\draw [line width=1pt] (12.17,5.37) -- (12.13,5.49);
\draw [line width=1pt] (9.9,0.5)-- (12.18,-0.3);
\draw [line width=1pt] (11.11,0.07) -- (11.01,0.01);
\draw [line width=1pt] (11.11,0.07) -- (11.07,0.19);
\draw [line width=1pt] (12.18,-0.3)-- (13.24,5);
\draw [line width=1pt] (12.74,2.5) -- (12.82,2.41);
\draw [line width=1pt] (12.74,2.5) -- (12.63,2.44);
\draw [line width=1pt] (12.71,2.35) -- (12.79,2.25);
\draw [line width=1pt] (12.71,2.35) -- (12.6,2.29);
\draw [line width=1pt] (9.9,0.5)-- (10.95,5.8);
\draw [line width=1pt] (10.45,3.31) -- (10.53,3.21);
\draw [line width=1pt] (10.45,3.31) -- (10.34,3.25);
\draw [line width=1pt] (10.42,3.15) -- (10.5,3.05);
\draw [line width=1pt] (10.42,3.15) -- (10.31,3.09);
\draw [line width=1pt] (7.54,0.34)-- (8.46,5);
\draw [line width=1pt] (8.02,2.74) -- (8.1,2.65);
\draw [line width=1pt] (8.02,2.74) -- (7.91,2.69);
\draw [line width=1pt] (6.18,5.8)-- (7.16,6.14);
\draw [line width=1pt] (6.82,6.02) -- (6.78,5.9);
\draw [line width=1pt] (6.82,6.02) -- (6.71,6.09);
\draw [line width=1pt] (6.67,5.97) -- (6.63,5.85);
\draw [line width=1pt] (6.67,5.97) -- (6.56,6.04);
\draw [line width=1pt] (6.55,0)-- (7.54,0.34);
\draw [line width=1pt] (7.19,0.22) -- (7.15,0.1);
\draw [line width=1pt] (7.19,0.22) -- (7.09,0.29);
\draw [line width=1pt] (7.04,0.17) -- (7,0.05);
\draw [line width=1pt] (7.04,0.17) -- (6.94,0.23);
\draw [line width=1pt] (6.55,0)-- (5.25,1.14);
\draw [line width=1pt] (5.84,0.62) -- (5.96,0.64);
\draw [line width=1pt] (5.84,0.62) -- (5.83,0.5);
\draw [line width=1pt] (5.96,0.52) -- (6.08,0.54);
\draw [line width=1pt] (5.96,0.52) -- (5.95,0.39);
\draw [line width=1pt] (5.72,0.73) -- (5.84,0.75);
\draw [line width=1pt] (5.72,0.73) -- (5.71,0.6);
\draw [line width=1pt] (8.46,5)-- (7.16,6.14);
\draw [line width=1pt] (7.75,5.62) -- (7.88,5.64);
\draw [line width=1pt] (7.75,5.62) -- (7.75,5.49);
\draw [line width=1pt] (7.87,5.51) -- (8,5.54);
\draw [line width=1pt] (7.87,5.51) -- (7.87,5.39);
\draw [line width=1pt] (7.63,5.72) -- (7.76,5.75);
\draw [line width=1pt] (7.63,5.72) -- (7.63,5.6);
\draw (0.18,1.14)-- (1.11,5.8);
\draw (1.11,5.8)-- (2.1,6.14);
\draw (2.1,6.14)-- (3.4,5);
\draw (3.4,5)-- (2.47,0.34);
\draw (2.47,0.34)-- (1.48,0);
\draw (1.48,0)-- (0.18,1.14);
\draw [->,line width=2pt] (3.73,3.25) -- (5.48,3.25);
\draw [->,line width=2pt] (8.53,3.25) -- (10.05,3.25);
\path[font=\normalsize]
(0.59,3.75) node[anchor=north west] {$a$}
(0.52,0.63) node[anchor=north west] {$b$}
(2.05,0.24) node[anchor=north west] {$c$}
(2.93,2.81) node[anchor=north west] {$a$}
(2.8,5.94) node[anchor=north west] {$b$}
(1.51,6.08) node[anchor=north west] {$c$};
\end{tikzpicture}
\end{center}
\caption{The composition $p\colon \mathcal{H}\to\mathcal{S}\to\mathcal{M}$.}
\label{compositions}
\end{figure}

As we said above, we identify $\calH$, $\Hnon$ with $\calS$, $\Snon$ and consider the compositions (which we still name by $p$)
$$p\colon\calH \stackrel\sim\longrightarrow \calS \longrightarrow \calM, \qquad p\colon\Hnon \stackrel\sim\longrightarrow \Snon \longrightarrow \Mnon.$$
The map $p$ is described geometrically in Fig.~\ref{compositions}. 
%The map $p$ is surjective because every flat torus has a minimal spine, but we will soon see that it is not injective: we now describe a lift $\tilde p \colon \Hnon \to \calS$ of $p$ to Teichm\"uller space that \emph{is} injective, although the composition $p=\pi \circ \tilde p$ is not.

\begin{figure}
\begin{center}
\begin{minipage}[c]{.40\textwidth}
\centering
\begin{tikzpicture}[scale=2,line cap=round,line join=round,>=triangle 45,x=1.0cm,y=1.0cm,scale=0.75]
\path[font=\footnotesize]
(-0.42,2.78) node[above]{$z=\tilde{p}(a,b,c)$};
\path[font=\footnotesize]
(-0.07,-0.01) node[below] {$0$}
(1.01,-0.01) node[below] {$1$}
(0.1,0.4) node[below]{$c'$}
(0.6,0.4) node[below]{$b'$}
(-0.15, 2.3) node[above]{$a'$};
\draw[->,color=black] (-0.8,0) -- (2.3,0);
\draw[->,color=black] (0,-0.27) -- (0,3.3);

\clip(-1.89,-0.27) rectangle (4.18,2.98);
\fill[fill=black,fill opacity=0.1] (0,0) -- (-0.42,2.88) -- (0.58,2.88) -- (1,0) -- cycle;

\draw (0,0)-- (-0.42,2.88);
\draw (-0.42,2.88)-- (0.58,2.88);
\draw (0.58,2.88)-- (1,0);
\draw (1,0)-- (0,0);

%spina sul  toro
\draw [line width=1pt] (0.23,0.22)-- (1,0);
\draw [line width=1pt] (0,0)-- (0.23,0.22);
\draw [line width=1pt] (0.23,0.22)-- (-0.42,2.88);
\end{tikzpicture}
\end{minipage}
\hspace{10mm}
\begin{minipage}[c]{.50\textwidth}
\centering
\caption{
How to construct $z$ from $(a', b', c')$: we pick the tripod with angles $\frac{2\pi}3$ and lengths $a', b', c'$, and place it in the upper half-plane so that the endpoints of  the edges $c'$ and $b'$ lie in $0$ and $1$. The point $z$ is the endpoint of the edge $a'$.}
\label{ptilde:fig}
\end{minipage}
\end{center}
\end{figure}
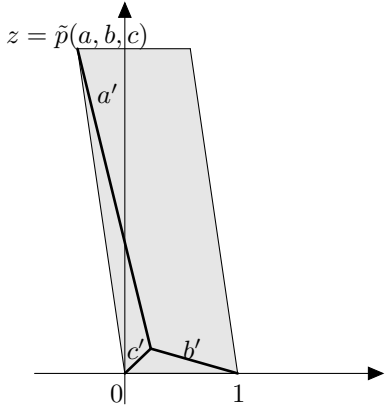

\begin{proposition}
The map 
\begin{align*}
\tilde p\colon \Hnon & \longrightarrow \matH^2 \\
(a,b,c) & \longmapsto \frac{2c^2-ab+ac+bc}{2(b^2+c^2+bc)}+i\frac{\sqrt{3}}{2}\,\frac{ab+bc+ac}{b^2+c^2+bc}
\end{align*}
is a lift of $p\colon \Hnon \to \Mnon$. 
The map $\tilde p$ is injective and sends homeomorphically the triangle $\Hnon$ onto the light grey domain drawn in Fig.~\ref{ptilde}.
\end{proposition}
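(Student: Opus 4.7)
The plan is to interpret the formula as the tripod construction of Fig.~\ref{ptilde:fig} and then verify each of the three claims. First I would derive the formula. Given $(a,b,c)$ with $a,b,c>0$, place the tripod (three segments of lengths $a,b,c$ meeting at a center $P$ with pairwise angles $\tfrac{2\pi}{3}$) in $\mathbb{C}$ so that the free endpoints of the $c$- and $b$-arms are at $0$ and $1$; by definition $z$ is the endpoint of the $a$-arm. Writing $\omega = e^{-2\pi i/3}$ and $\theta_c$ for the angular direction of the $c$-arm at $P$, one has $P = -c\,e^{i\theta_c}$ and the constraint $-c\,e^{i\theta_c} + b\,e^{i\theta_c}\bar\omega = 1$ gives $e^{i\theta_c} = (b\bar\omega - c)^{-1}$. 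Substituting in $z = P + a\,e^{i\theta_c}\omega$ produces $z = (a\omega - c)/(b\bar\omega - c)$. Multiplying numerator and denominator by $b\omega - c$ (using $\omega\bar\omega = 1$ and $\omega + \bar\omega = -1$ to get real denominator $b^2 + bc + c^2$) and splitting real and imaginary parts via $\omega^2 = \bar\omega = -\tfrac12 + i\tfrac{\sqrt 3}{2}$ recovers the formula of the statement. Since it is homogeneous of degree zero, $\tilde p$ is well-defined on $\Hnon$.

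To show that $\tilde p$ lifts $p$, I would prove that the tripod descends to a minimal spine $\Gamma_z$ of $T_z = \mathbb{C}/(\matZ + z\matZ)$ whose complement is the semi-regular hexagon with sides $(a,b,c)$. The endpoints $0, 1, z$ all lie in $\Lambda_z = \matZ + z\matZ$, so they project to a single vertex $Q$, and the tripod projects to a $\theta$-graph $\Gamma_z$ with vertices $\{P, Q\}$ and three arcs of lengths $c, b, a$. The angles at $P$ are $\tfrac{2\pi}{3}$ by construction; at $Q$, the three arriving arms are the $b$-, $c$-, $a$-arms of the tripods centered at $P$, $P+1$, $P+1-z$ respectively, so they enter $Q$ from directions $\theta_b+\pi, \theta_c+\pi, \theta_a+\pi$, still $\tfrac{2\pi}{3}$ apart. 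Lifting to $\mathbb{C}$, the $\Lambda_z$-translates of the tripod tile the plane by congruent semi-regular hexagons with sides $(a,b,c,a,b,c)$ and angles $\tfrac{2\pi}{3}$, each a fundamental domain for $\Lambda_z$. Therefore $\Gamma_z$ is minimal per Definition~\ref{minimal:definition}, and cutting $T_z$ along it recovers the semi-regular hexagon on $(a,b,c)$. Via $\calH\cong\calS$, this gives $p(a,b,c) = [T_z]\in\Mnon$, which is the image of $\tilde p(a,b,c)$ under the quotient $\matH^2 \to \Mnon$.

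For injectivity I would reverse the construction: given $z\in\matH^2$ in the image, the classical Fermat-point problem produces a unique interior point $P$ from which segments to $0, 1, z$ meet pairwise at $\tfrac{2\pi}{3}$, and the ordered triple $(|P-z|,|P-1|,|P|) = (a,b,c)$ is the unique preimage in $\Hnon$.

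Finally, to identify the image I would trace the three sides of $\Hnon = \{a \geq b \geq c \geq 0,\, a+b+c = 1\}$. On the mirror side $a = b$, a short computation using the formula yields $|z|=1$, so this edge maps onto the unit-circle arc from $e^{2\pi i/3}$ (at $c=0$) to $e^{\pi i/3}$ (regular hexagon). On the mirror side $b = c$, the real part simplifies to $\tfrac12$, so this edge maps onto the vertical ray $\Re z = \tfrac12$ from $e^{\pi i/3}$ to the cusp. On the removed side $c=0$, eliminating the parameter gives $\Im z = -\sqrt{3}\,\Re z$ with $\Re z \leq -\tfrac12$, the Euclidean ray at argument $\tfrac{2\pi}{3}$. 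These three arcs bound the light grey domain of Fig.~\ref{ptilde}, and by continuity, injectivity, and invariance of domain, $\tilde p$ is a homeomorphism onto this region. The main obstacle I expect is the careful angle tracking at the identified vertex $Q$ to certify minimality of $\Gamma_z$; the remaining steps reduce to direct computation.
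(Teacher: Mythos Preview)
Your proposal is correct and rests on the same geometric idea as the paper --- the tripod of Fig.~\ref{ptilde:fig} --- but the executions differ in two respects worth noting. First, the paper derives the formula trigonometrically: it rescales so that $b^2+c^2+bc=1$, applies the law of cosines (``Carnot Theorem'') twice to obtain $\mathrm{Arg}(z)$ and $|z|$ in closed form, and then reads off the real and imaginary parts. Your complex-arithmetic route $z=(a\omega-c)/(b\bar\omega-c)$ is shorter and bypasses the rescaling by homogeneity; it is a genuine computational alternative, though the geometry is the same. Second, the paper is terse on the remaining claims: it asserts ``the map $\tilde p$ is clearly a lift of $p$'' and does not spell out injectivity or the boundary tracing. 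Your treatment of these points --- the angle check at the identified vertex $Q$, the hexagonal tiling of $\mathbb{C}$, the Fermat-point inverse, and the three boundary computations $|z|=1$, $\Re z=\tfrac12$, $\Im z=-\sqrt3\,\Re z$ --- fills in exactly what the paper leaves to the reader. One small point: in the Fermat-point step you should remark that all angles of the triangle $0,1,z$ are strictly below $\tfrac{2\pi}{3}$ whenever $c>0$, which is what guarantees the Fermat point is interior and unique; this is implicit in your boundary analysis (the angle at $0$ reaches $\tfrac{2\pi}{3}$ precisely on the excluded side $c=0$), but making it explicit closes the argument cleanly.
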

\begin{proof}
The map $\tilde p$ is defined in Fig.~\ref{ptilde:fig}. 
We first rescale the triple $(a,b,c)$, which by hypothesis satisfies $a\geq b \geq c$ and $a+b+c=1$, to $(a',b',c')$, 
by multiplying each term by $1/\sqrt{b^2+c^2+bc}$. 
Now we pick the \emph{tripod} with one vertex and three edges of length $a', b', c'$ and with angles $\frac{2\pi}3$,
and we place it in the half-space with two vertices in $0$ and $1$ as shown in the figure (we can do this thanks to the rescaling). 
The third vertex goes to some $z\in \matH^2$ and we set $\tilde p(a,b,c) = z$.

The map $\tilde p$ is clearly a lift of $p$ and it only remains to determine an explicit expression for $\tilde p$. 
Applying repeatedly the Carnot Theorem, we get:
\begin{align*}
{\rm Arg}(z)&=\cos^{-1} \frac{2c^2-ab+ac+bc}{2\sqrt{\left(b^2+c^2+bc\right)\left(a^2+c^2+ac\right)}}\,,\\
%\cos\alpha_2&=\frac{2b^2-ac+ab+bc}{2\left(b^2+c^2+bc\right)^{\frac12}\left(a^2+b^2+ab\right)^{\frac12} }\,,\\
%\cos\alpha_3&=\frac{2a^2-bc+ab+ac}{2\left(a^2+c^2+ac\right)^{\frac12}\left(a^2+b^2+ab\right)^{\frac12} }\,.
\vert z\vert&=\sqrt{\frac{a^2+c^2+ac}{b^2+c^2+bc}}\,.
\end{align*}
Finally,
$$
\tilde p(a,b,c) =z=\frac{2c^2-ab+ac+bc}{2(b^2+c^2+bc)}+i\frac{\sqrt{3}}{2}\,\frac{ab+bc+ac}{b^2+c^2+bc}.
$$
The proof is complete.
\end{proof}

\begin{figure}[h]
\begin{center}
\begin{tikzpicture}[scale=1.65,line cap=round,line join=round,>=triangle 45,x=1.0cm,y=1.0cm]
\filldraw[fill=black!30!white,scale=0.5]
(1,7.3) 
to[out= -90,in=90, looseness=1] (1,1.73) -- (1,1.73)
to[out=29, in=151, looseness=1] (3, 1.73) --
(3,1.73)to[out= 60,in=-120, looseness=1] (6.22,7.3) -- (6.22,7.3) to[out=180, in=0, looseness=1]  (1,7.3);
\filldraw[fill=black!5!white,scale=0.5]
(-4.2,7.3) 
to[out= -60,in=120, looseness=1] (-1,1.73) -- (-1,1.73)
to[out=29, in=151, looseness=1] (1, 1.73) --
(1,1.73)to[out= 90,in=-90, looseness=1] (1,7.3) -- (1,7.3) to[out=180, in=0, looseness=1]  (-4.2,7.3);
\draw[color=white,scale=0.5,very thick]
(-4.4,7.3) 
to[out= 0,in=180, looseness=1] (6.4,7.3) ;
\draw[->,color=black] (-2.84,0) -- (4.51,0);
\draw[->,color=black] (0,-0.19) -- (0,3.83);
\draw [shift={(0,0)},line width=0.5pt,dotted]  
plot[domain=0:3.14,variable=\t]({1*1*cos(\t r)+0*1*sin(\t r)},{0*1*cos(\t r)+1*1*sin(\t r)});
\draw [shift={(1,0)},line width=0.5pt,dotted]  
plot[domain=0:3.14,variable=\t]({1*1*cos(\t r)+0*1*sin(\t r)},{0*1*cos(\t r)+1*1*sin(\t r)});
\clip(-2.84,-0.19) rectangle (4.51,3.73);
\draw [shift={(0,0)},line width=0.7pt]  plot[domain=1.05:2.09,variable=\t]({1*1*cos(\t r)+0*1*sin(\t r)},{0*1*cos(\t r)+1*1*sin(\t r)});
\draw [shift={(1,0)},line width=0.7pt]  plot[domain=1.05:2.09,variable=\t]({1*1*cos(\t r)+0*1*sin(\t r)},{0*1*cos(\t r)+1*1*sin(\t r)});
\path[font=\tiny]
(0.95,0.03) node[anchor=north west] {$1$};
\path[font=\normalsize]
(-1.4,2.81) node[anchor=north west] {$ \tilde p (\Hnon)$}
(2.2,2) node[anchor=north west] {$ \tilde p(\calH) $};
\draw [line width=0.7pt,color=white] (0.5,3.66) -- (0.5,-0.19);
\draw [line width=0.7pt,color=white] (-0.5,0.87)-- (-2.11,3.66);
\draw [line width=1pt,color=white] (3.11,3.66)-- (1.5,0.87);
\draw [line width=0.5pt,dotted] (0.5,3.66) -- (0.5,-0.19);
\draw [line width=0.5pt,dashed] (-0.5,0.87)-- (-2.11,3.66);
\draw [dotted] (-1.5,3.66)-- (-1.5,0);
\draw [dotted] (-0.5,3.66)-- (-0.5,0);
\draw [dotted] (1.5,3.66)-- (1.5,0);
\draw [dotted] (2.5,3.66)-- (2.5,0);
\draw [line width=1pt,dashed] (3.11,3.66)-- (1.5,0.87);
\draw [line width=1pt] (0.5,3.66)-- (0.5,0.87);
\path[font=\tiny]
(0.5, 2.2) node[above,rotate=90]{$a\geq b=c$}
(-1.2, 2.2) node[below,rotate=-60]{$c=0$}
(-0.48, 0.8) node[right]{$a=b\geq c$};
\end{tikzpicture}
\caption{The lift $\tilde p$ sends $\Hnon$ homeomorphically to the light grey domain on the left.
It sends the two sides $a\geq b = c$ and $a=b\geq c$ of $\Hno$ to two geodesic lines in $\matH^2$, and sends the line at infinity $c=0$ to the constant-curvature line dashed in the figure. In the oriented setting, the lift $\tilde p$ sends $\calH$ to the whole bigger grey domain but is discontinuous on the segment $s$ of hexagons of type $a=b\geq c$.}
\label{ptilde}
\end{center}
\end{figure}

Fig.~\ref{ptilde} shows that the lift $\tilde p$ sends the two sides $a=b\geq c$ and $a\geq b=c$ of the triangle $\Hnon$ to geodesic arcs in $\matH^2$. Recall that the compactification $\overline{\Hnon}$ is obtained by adding the singly-degenerate parallelograms $(a,1-a,0)$ with $a\in [\frac 12, 1)$ and the doubly-degenerate $P=(1,0,0)$. The map $\tilde p$ also extends to the parallelograms, and sends them to the dashed line in Fig.~\ref{ptilde:fig}, but it does \emph{not} extend continuously to $P$, not even as a map from $\overline{\Hnon}$ to $\overline{\matH^2}$. However, the map $p$ from moduli spaces does extend.

\begin{proposition}
The map $p\colon \Hnon \to \Mnon$ extends continuously to a map $p\colon \overline{\Hnon} \to \overline{\Mnon}$.
\end{proposition}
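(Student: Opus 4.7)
The plan is to write $p=\pi\circ\tilde p$ with $\pi\colon\matH^2\to\Mnon$ the quotient by $\mathrm{PSL}_2(\matZ)$, and extend it to $\overline{\Hnon}$ one boundary stratum at a time. The boundary $\overline{\Hnon}\setminus\Hnon$ consists of the open arc of singly-degenerate parallelograms $\{(a,1-a,0):a\in[\tfrac12,1)\}$ together with the doubly-degenerate vertex $P=(1,0,0)$.

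On the open parallelogram arc, the explicit formula defining $\tilde p$ still applies (its denominator $b^2+c^2+bc$ vanishes only when $b=c=0$) and yields a continuous map into $\matH^2$, as already remarked in the text. Post-composing with $\pi$ continuously extends $p$ on this stratum, with image in $\Mnon\subset\overline{\Mnon}$. No real work is needed here.

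The hard part is the vertex $P$, where $\tilde p$ itself fails to extend (the real part diverges at different rates depending on how one approaches $P$, so $\tilde p$ has no limit in $\overline{\matH^2}$). The plan is to set $p(P)$ equal to the cusp of $\overline{\Mnon}$ and verify continuity by exploiting the fact that the cusp, being a single added point, has as a neighborhood basis the images of the horoballs $\{\Im(z)>R\}$. Since the representative $z_0\in D$ of a point $z\in\matH^2$ satisfies $\Im(z_0)\geq\Im(z)$ (a classical consequence of $|cz_0+d|\geq 1$ for all nontrivial $(c,d)$ and $z_0\in D$), continuity at $P$ reduces to showing that $\Im(\tilde p(a_n,b_n,c_n))\to+\infty$ whenever $(a_n,b_n,c_n)\to P$ in $\overline{\Hnon}$.

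This is a short direct estimate on the explicit formula for $\tilde p$. Setting $s_n=b_n+c_n\to 0$, the normalization gives $a_n=1-s_n\to 1$, the denominator satisfies
$$b_n^2+c_n^2+b_n c_n\;=\;s_n^2-b_n c_n\;\in\;\bigl[\tfrac34 s_n^2,\,s_n^2\bigr]$$
(using $0\leq b_n c_n\leq s_n^2/4$), and the imaginary-part numerator is $a_n b_n+b_n c_n+a_n c_n=a_n s_n+b_n c_n\geq a_n s_n$. Therefore
$$\Im\bigl(\tilde p(a_n,b_n,c_n)\bigr)\;\geq\;\frac{\sqrt 3}{2}\cdot\frac{a_n s_n}{s_n^2}\;=\;\frac{\sqrt 3\,a_n}{2s_n}\;\longrightarrow\;+\infty,$$
which finishes the argument. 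The conceptual point is that although $\tilde p$ diverges in a direction-dependent way near $P$, only the unbounded growth of $\Im$ matters for convergence to the cusp, since the cusp is a single point of $\overline{\Mnon}$.
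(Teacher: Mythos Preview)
Your argument is correct and follows the same approach as the paper: define the extension by sending the doubly-degenerate hexagon $P$ to the cusp of $\overline{\Mnon}$. The paper's own proof is a single line that simply states this definition, leaving the continuity verification implicit; you have supplied exactly that verification via the estimate $\Im(\tilde p)\to+\infty$, so your proof is a fleshed-out version of the same idea (one tiny slip: $\pi\colon\matH^2\to\Mnon$ is the quotient by $\mathrm{PSL}_2(\matZ)$ together with the reflection $z\mapsto -\bar z$, not just $\mathrm{PSL}_2(\matZ)$, but this is harmless since the reflection preserves $\Im$).
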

\begin{proof}
Send the doubly degenerate point $P$ to the point at infinity in $\overline{\calM}$.
\end{proof}

The oriented picture is easily deduced from the non-oriented one. We can lift the map $p\colon \calH \to \calM$ to a map $\tilde p\colon \calH \to \matH^2$ in Teichm\"uller space whose image is the bigger (both light and dark) grey domain in Fig.~\ref{ptilde}, however the map $\tilde p$ is discontinuous at the segment consisting of all hexagons with $a=b\geq c$, which is sent to one of the two curved geodesic arcs in the picture. The map $p\colon \calH \to \calM$ extends continuously to a map $p\colon \overline{\calH} \to \overline{\calM}$.

The M\"obius transformation $z\mapsto\frac{2iz+\sqrt{3}-i}{2z-1+\sqrt{3}i}$ gives a more symmetric picture of the image $\tilde{p}(\mathcal{H})$ inside the hyperbolic plane in the Poincar\'e disc model, as can be seen in Figure~\ref{disc}.

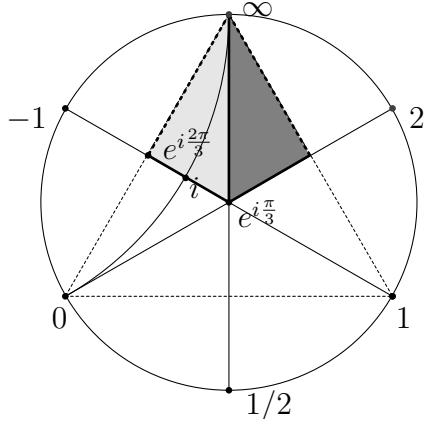
\begin{figure}[h]
\begin{center}
\definecolor{uququq}{rgb}{0.25,0.25,0.25}
\begin{tikzpicture}[line cap=round,line join=round,>=triangle
45,x=1.0cm,y=1.0cm,scale=2.5]
\clip(-2,-1.15) rectangle (2.76,1.4);
\fill[line width=2pt,fill=black,fill opacity=0.1] (0,1) -- (-0.43,0.25) --
(0,0) -- cycle;
\fill[line width=2pt,fill=black,fill opacity=0.5] (0,1) -- (0,0) --
(0.43,0.25) -- cycle;
\draw(0,0) circle (1cm);
\draw [line width=1pt] (0,1)-- (0,0);
\draw (0,0)-- (-0.87,0.5);
\draw (-0.87,-0.5)-- (0,0);
\draw (0,0)-- (0,-1);
\draw (0,0)-- (0.87,-0.5);
\draw (0,0)-- (0.87,0.5);
\draw [shift={(-1.73,1)}] plot[domain=-1.05:0,variable=\t]({1*1.73*cos(\t
r)+0*1.73*sin(\t r)},{0*1.73*cos(\t r)+1*1.73*sin(\t r)});
\draw [dash pattern=on 1pt off 1pt] (0,1)-- (-0.87,-0.5);
\draw [dash pattern=on 1pt off 1pt] (0,1)-- (0.87,-0.5);
\draw (-0.27,0.19) node[anchor=north west] {$i$};
\draw (-0.41,0.45) node[anchor=north west] {$e^{i\frac{2\pi}{3}}$};
\draw (-0.01,0.07) node[anchor=north west] {$e^{i\frac{\pi}{3}}$};
\draw (0.01,1.12) node[anchor=north west] {$\infty$};
\draw (-1.24,0.56) node[anchor=north west] {$-1$};
\draw (-1,-0.5) node[anchor=north west] {$0$};
\draw (0.83,-0.5) node[anchor=north west] {$1$};
\draw (0.9,0.56) node[anchor=north west] {$2$};
\draw [dash pattern=on 1pt off 1pt] (-0.87,-0.5)-- (0.87,-0.5);
\draw (0.03,-0.94) node[anchor=north west] {1/2};
\draw [line width=1pt] (-0.43,0.25)-- (0,0);
\draw [line width=1pt] (0,0)-- (0.43,0.25);
\draw [line width=1pt,dash pattern=on 1pt off 2pt] (0.43,0.25)-- (0,1);
\draw [line width=1pt,dash pattern=on 1pt off 2pt] (0,1)-- (-0.43,0.25);
\begin{scriptsize}
\fill [color=black] (0,0) circle (.5pt);
\fill [color=black] (0.87,-0.5) circle (.5pt);
\fill [color=black] (-0.87,0.5) circle (.5pt);
\fill [color=black] (0,-1) circle (.5pt);
\fill [color=uququq] (0,1) circle (.5pt);
\fill [color=uququq] (0.87,0.5) circle (.5pt);
\fill [color=black] (-0.87,-0.5) circle (.5pt);
\fill [color=black] (-0.23,0.13) circle (.5pt);
\fill [color=black] (-0.43,0.25) circle (.5pt);
\fill [color=black] (0.87,-0.5) circle (.5pt);
\end{scriptsize}
\end{tikzpicture}
\caption{A picture of $\tilde{p}(\H)$  in the Poincar\'e disc model. }
\label{disc}
\end{center}
\end{figure}

\begin{remark}\rm
Now, it is possible to quantify the ``distance'' between two spines on different flat tori, in a natural way. 
Indeed, thanks to the map $p$ and the Teichm\"uller metric, the moduli space of spines 
can be endowed with a  structure of hyperbolic orbifold. 

For, identify $-$ by a representation $\rho$ $-$ the  action of the orbifold fundamental group $\pi_1\H$ 
on the universal covering $\tilde{\H}$ with the action on the hyperbolic plane of the group $\Gamma$, 
generated by an order-three elliptic rotation about the point $e^{i\frac{\pi}{3}}$.
To get a developing map $d:\tilde{\mathcal{H}}\to\mathbb{H}^2$ for the hyperbolic structure on $\H$, 
lift $\rho$-equivariantly the map $p$.

The developed image $d(\tilde{\H})$ of $\H$ is the interior in $\mathbb{H}^2$ of the regular non-geodesic ideal triangle joining the points $\infty$, 0, 1 in Figure~\ref{disc}. The set $\tilde{p}(\H)$ is a fundamental domain for the action of $\Gamma$ and the space $\H$ is identified with the quotient $d(\tilde{\mathcal{H}})/\Gamma$ .

Hence, the moduli space of spines is an infinite-area incomplete hyperbolic orbifold. Its completion, supported on the complement of the doubly degenerate hexagon $\overline{\mathcal{H}}\setminus\{P\}$, is an orbifold with non-geodesic boundary. Its boundary is an infinite constant-curvature line whose points are (all the) four-valent geodesic spines with alternating angles $\frac{\pi}{3}$ and $\frac{2\pi}{3}$.

Similarly, $\overline{\H^\mathrm{no}}\setminus\{P\}$ is a complete infinite-area hyperbolic orbifold with (unbounded, non-geodesic) boundary and two mirror edges.
\end{remark}

\subsection{The number of minimal spines.}
We are ready to determine the fiber $p^{-1}(T)$ for every flat torus $T$ in $\calM$ and $\Mnon$, which may be interpreted as the set of all minimal spines in $T$, considered up to (orientation-preservingly or all) isometries of $T$. 

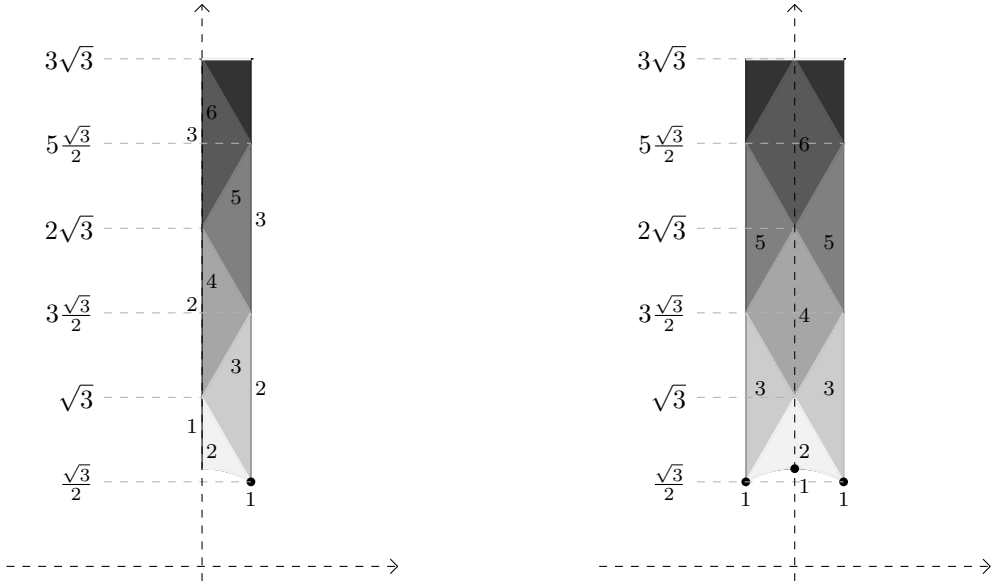
\begin{figure}[h!]
\begin{center}
\begin{minipage}[c]{.4\textwidth}
\centering
\begin{center}
\begin{tikzpicture}[scale=0.65]
%m2colorato
\filldraw[fill=black!5!white]
(0,3.46) 
to[out= -90,in=90, looseness=1] (0,2) -- (0,2)
to[out=0, in=151, looseness=1] (1, 1.73) --
(1,1.73)to[out= 120,in=-60, looseness=1] (0,3.46); 
%3colorato
\filldraw[fill=black!20!white]
(0,3.46) 
to[out= -60,in=120, looseness=1] (1,1.73) -- (1,1.73)
to[out=90, in=-90, looseness=1] (1,5.19) --
(1,5.19) to[out= -120,in=60, looseness=1] (0,3.46); 
%m4colorato
\filldraw[fill=black!35!white]
(0,6.92) 
to[out=-90,in=90, looseness=1] (0,3.46)--
(0,3.46)to[out= 60,in=-120, looseness=1] (1,5.19)--
(1,5.19)to[out= 120,in=-60, looseness=1] (0,6.92); 
%5colorato
\filldraw[fill=black!50!white,shift={(0,3.46)}]
(0,3.46) 
to[out= -60,in=120, looseness=1] (1,1.73) -- (1,1.73)
to[out=90, in=-90, looseness=1] (1,5.19) --
(1,5.19) to[out= -120,in=60, looseness=1] (0,3.46); 
%m6colorato
\filldraw[fill=black!65!white]
(0,10.39) 
to[out=-90,in=90, looseness=1] (0,6.92)--
(0,6.92)to[out= 60,in=-120, looseness=1] (1,8.66)--
(1,8.66)to[out= 120,in=-60, looseness=1] (0,10.39);
%7colorato
\filldraw[fill=black!80!white]
(0,10.39) 
to[out= -60,in=120, looseness=1] (1,8.66) -- (1,8.66)
to[out=90, in=-90, looseness=1] (1,10.39) --
(1,10.39) to[out= 0,in=180, looseness=1] (0,10.39);  

%spazio dei moduli
\draw[scale=2,domain=0: 0.52,
smooth,variable=\t,shift={(0,0)},rotate=0]plot({1*sin(\t r)},
{1*cos(\t r)}) ;
%2colorato
\draw[color=black!5!white,thick]
(0,2)to[out=0, in=151, looseness=1] (1, 1.73)
(1,1.73)to[out=120, in=-60, looseness=1](0,3.46); 
%3colorato
\draw[color=black!20!white,thick]
(0,3.46)to[out=60, in=-120, looseness=1](1,5.19);
%3colorato 
\draw[color=black!20!white]
(1,1.73) to[out=90, in=-90, looseness=1](1,5.19);
%4colorato
\draw[color=black!35!white,thick]
(1,5.19)to[out=120, in=-60, looseness=1](0,6.92);
%5colorato
\draw[color=black!50!white]
(1,5.19) to[out=90, in=-90, looseness=1](1,8.66);
%5colorato
\draw[color=black!50!white,thick]
(0,6.92)to[out=60, in=-120, looseness=1](1,8.66);
%(0,6.92)to[out=120, in=-60, looseness=1](-1,8.66);
%6colorato
\draw[color=black!65!white,thick]
%(-1,8.66)to[out=60, in=-120, looseness=1](0,10.39)
(1,8.66)to[out=120, in=-60, looseness=1](0,10.39); 
%7colorato
\draw[color=black!80!white]
(1,8.66) to[out=90, in=-90, looseness=1](1,10.39);
%(-1,8.66) to[out=90, in=-90, looseness=1](-1,10.39);
\draw[color=black!5!white, very thick]
(0,10.39) to[out=0, in=180, looseness=1](1,10.39);

\path
(1,1.73) node[circle,fill=black,scale=0.3] {};

\draw[color=black!30!white,dashed]
(-2,10.39)to[out=0, in=180, looseness=1](0,10.39)
(-2,8.66)to[out=0, in=180, looseness=1](1,8.66)
(-2,6.92)to[out=0, in=180, looseness=1](0,6.92)
(-2,5.19)to[out=0, in=180, looseness=1](1,5.19)
(-2,3.46)to[out=0, in=180, looseness=1](0,3.46)
(-2,1.73)to[out=0, in=180, looseness=1](1,1.73);
\path[font=\footnotesize]
(-2,10.39)node[left]{$3\sqrt{3}$}
(-2,8.66)node[left]{$5\frac{\sqrt{3}}{2}$}
(-2,6.92)node[left]{$2\sqrt{3}$}
(-2,5.19)node[left]{$3\frac{\sqrt{3}}{2}$}
(-2,3.46)node[left]{$\sqrt{3}$}
(-2,1.73)node[left]{$\frac{\sqrt{3}}{2}$};
\path[font=\tiny,color=black]
(1,1.73)node[below]{$1$}
(-0.2,3.23)node[below]{$1$}
(0.2,2.73)node[below]{$2$}
(-0.2,5.73)node[below]{$2$}
(1.2,4)node[below]{$2$}
(0.7,4.46)node[below]{$3$}
(-0.2,9.2)node[below]{$3$}
(1.2,7.46)node[below]{$3$}
(0.2,6.19)node[below]{$4$}
(0.7,7.92)node[below]{$5$}
(0.2,9.66)node[below]{$6$};
%frecce assi
\draw
(4,0) to[out= 135,in=-45, looseness=1] (3.85,0.15)
(4,0) to[out= -135,in=45, looseness=1] (3.85,-0.15)
(0,11.5) to[out= -45,in=135, looseness=1] (0.15,11.35)
(0,11.5) to[out= 45,in=-135, looseness=1] (-0.15,11.35);
%assi cartesiani
\draw[dashed]
(0,-0.3) to[out= 90,in=-90, looseness=1] (0,11.5)
(-4,0) to[out= 0,in=-180, looseness=1] (4,0);
\end{tikzpicture}
\end{center}

\end{minipage}
\hspace{10mm}
\begin{minipage}[c]{.4\textwidth}
\centering
%disegno_m1

\begin{center}
\begin{tikzpicture}[scale=0.65]
%m2colorato
\filldraw[fill=black!5!white]
(0,3.46) 
to[out= -120,in=60, looseness=1] (-1,1.73) -- (-1,1.73)
to[out=29, in=151, looseness=1] (1, 1.73) --
(1,1.73)to[out= 120,in=-60, looseness=1] (0,3.46); 
%3colorato
\filldraw[fill=black!20!white]
(0,3.46) 
to[out= -120,in=60, looseness=1] (-1,1.73) -- (-1,1.73)
to[out=90, in=-90, looseness=1] (-1,5.19) --
(-1,5.19) to[out= -60,in=120, looseness=1] (0,3.46); 
\filldraw[fill=black!20!white]
(0,3.46) 
to[out= -60,in=120, looseness=1] (1,1.73) -- (1,1.73)
to[out=90, in=-90, looseness=1] (1,5.19) --
(1,5.19) to[out= -120,in=60, looseness=1] (0,3.46); 
%m4colorato
\filldraw[fill=black!35!white]
(0,6.92) 
to[out= -120,in=60, looseness=1] (-1,5.19) --
(-1,5.19)to[out= -60,in=120, looseness=1] (0,3.46)--
(0,3.46)to[out= 60,in=-120, looseness=1] (1,5.19)--
(1,5.19)to[out= 120,in=-60, looseness=1] (0,6.92); 
%5colorato
\filldraw[fill=black!50!white,shift={(0,3.46)}]
(0,3.46) 
to[out= -120,in=60, looseness=1] (-1,1.73) -- (-1,1.73)
to[out=90, in=-90, looseness=1] (-1,5.19) --
(-1,5.19) to[out= -60,in=120, looseness=1] (0,3.46); 
\filldraw[fill=black!50!white,shift={(0,3.46)}]
(0,3.46) 
to[out= -60,in=120, looseness=1] (1,1.73) -- (1,1.73)
to[out=90, in=-90, looseness=1] (1,5.19) --
(1,5.19) to[out= -120,in=60, looseness=1] (0,3.46); 
%m6colorato
\filldraw[fill=black!65!white]
(0,10.39) 
to[out= -120,in=60, looseness=1] (-1,8.66) --
(-1,8.66)to[out= -60,in=120, looseness=1] (0,6.92)--
(0,6.92)to[out= 60,in=-120, looseness=1] (1,8.66)--
(1,8.66)to[out= 120,in=-60, looseness=1] (0,10.39);
%7colorato
\filldraw[fill=black!80!white]
(0,10.39) 
to[out= -120,in=60, looseness=1] (-1,8.66) -- (-1,8.66)
to[out=90, in=-90, looseness=1] (-1,10.39) --
(-1,10.39) to[out= 0,in=180, looseness=1] (0,10.39); 
\filldraw[fill=black!80!white]
(0,10.39) 
to[out= -60,in=120, looseness=1] (1,8.66) -- (1,8.66)
to[out=90, in=-90, looseness=1] (1,10.39) --
(1,10.39) to[out= 0,in=180, looseness=1] (0,10.39);  
%spazio dei moduli
\draw[scale=2,domain=-0.52: 0.52,
smooth,variable=\t,shift={(0,0)},rotate=0]plot({1*sin(\t r)},
{1*cos(\t r)}) ;
%2colorato
\draw[color=black!5!white,thick]
(-1,1.73)to[out=29, in=151, looseness=1] (1, 1.73)
(-1,1.73)to[out=60, in=-120, looseness=1](0,3.46)
(1,1.73)to[out=120, in=-60, looseness=1](0,3.46); 
%3colorato
\draw[color=black!20!white,thick]
(0,3.46)to[out=60, in=-120, looseness=1](1,5.19)
(0,3.46)to[out=120, in=-60, looseness=1](-1,5.19); 
%3colorato 
\draw[color=black!20!white]
(1,1.73) to[out=90, in=-90, looseness=1](1,5.19)
(-1,1.73) to[out=90, in=-90, looseness=1](-1,5.19);
%4colorato
\draw[color=black!35!white,thick]
(-1,5.19)to[out=60, in=-120, looseness=1](0,6.92)
(1,5.19)to[out=120, in=-60, looseness=1](0,6.92);
%5colorato
\draw[color=black!50!white]
(1,5.19) to[out=90, in=-90, looseness=1](1,8.66)
(-1,5.19) to[out=90, in=-90, looseness=1](-1,8.66);
%5colorato
\draw[color=black!50!white,thick]
(0,6.92)to[out=60, in=-120, looseness=1](1,8.66)
(0,6.92)to[out=120, in=-60, looseness=1](-1,8.66);
%6colorato
\draw[color=black!65!white,thick]
(-1,8.66)to[out=60, in=-120, looseness=1](0,10.39)
(1,8.66)to[out=120, in=-60, looseness=1](0,10.39); 
%7colorato
\draw[color=black!80!white]
(1,8.66) to[out=90, in=-90, looseness=1](1,10.39)
(-1,8.66) to[out=90, in=-90, looseness=1](-1,10.39);
\draw[color=black!5!white, very thick]
(-1,10.39) to[out=0, in=180, looseness=1](1,10.39);

\path
(-1,1.73) node[circle,fill=black,scale=0.3] {}
(0,2) node[circle,fill=black,scale=0.3] {}
(1,1.73) node[circle,fill=black,scale=0.3] {};

\draw[color=black!30!white,dashed]
(-2,10.39)to[out=0, in=180, looseness=1](0,10.39)
(-2,8.66)to[out=0, in=180, looseness=1](1,8.66)
(-2,6.92)to[out=0, in=180, looseness=1](0,6.92)
(-2,5.19)to[out=0, in=180, looseness=1](1,5.19)
(-2,3.46)to[out=0, in=180, looseness=1](0,3.46)
(-2,1.73)to[out=0, in=180, looseness=1](1,1.73);
\path[font=\footnotesize]
(-2,10.39)node[left]{$3\sqrt{3}$}
(-2,8.66)node[left]{$5\frac{\sqrt{3}}{2}$}
(-2,6.92)node[left]{$2\sqrt{3}$}
(-2,5.19)node[left]{$3\frac{\sqrt{3}}{2}$}
(-2,3.46)node[left]{$\sqrt{3}$}
(-2,1.73)node[left]{$\frac{\sqrt{3}}{2}$};
\path[font=\tiny,color=black]
(1,1.73)node[below]{$1$}
(0.2,2)node[below]{$1$}
(-1,1.73)node[below]{$1$}
(0.2,2.73)node[below]{$2$}
(-0.7,4)node[below]{$3$}
(0.7,4)node[below]{$3$}
(0.2,5.5)node[below]{$4$}
(-0.7,7)node[below]{$5$}
(0.7,7)node[below]{$5$}
(0.2,9)node[below]{$6$};
%frecce assi
\draw
(4,0) to[out= 135,in=-45, looseness=1] (3.85,0.15)
(4,0) to[out= -135,in=45, looseness=1] (3.85,-0.15)
(0,11.5) to[out= -45,in=135, looseness=1] (0.15,11.35)
(0,11.5) to[out= 45,in=-135, looseness=1] (-0.15,11.35);
%assi cartesiani
\draw[dashed]
(0,-0.3) to[out= 90,in=-90, looseness=1] (0,11.5)
(-4,0) to[out= 0,in=-180, looseness=1] (4,0);
\end{tikzpicture}
\end{center}

\end{minipage}

\end{center}
\caption{The number of minimal spines on each flat torus, in the unoriented and oriented setting. At each point $z$ in $\Mno$ or $\calM$, the number $c^{\rm no}(z)$ or $c(z)$ is the smallest number among all numbers written on the adjacent strata (both functions are lower semi-continuous).}
\label{c:fig}
\end{figure}

\begin{theorem}\label{fibra}
Every unoriented flat torus $T$ has a finite number $c^{\rm no}(T)$ of minimal spines up to isometries of $T$. Analogously, every oriented flat torus $T$ has a finite number $c(T)$ of minimal spines up to orientation-preserving isometries. The proper functions $c^{\rm no} \colon \Mnon \to \matN$ and $c\colon \calM \to \matN$ are shown in Fig.~\ref{c:fig}.
\end{theorem}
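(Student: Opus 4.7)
The plan is to exploit the explicit lift $\tilde p\colon \Hnon \hookrightarrow \matH^2$ provided by the preceding proposition, which identifies $\Hnon$ with the explicit region $R\subset\matH^2$ bounded by the arc of the unit circle from $e^{2\pi i/3}$ to $e^{\pi i/3}$, the vertical ray $\{\Re z=\tfrac12,\ \Im z\geq\tfrac{\sqrt 3}{2}\}$, and the Euclidean ray through $0$ and $e^{2\pi i/3}$ (the image of the degenerate edge $c=0$). Let $\widetilde\MCG$ denote $\MCG\cong PSL_2(\matZ)$ in the oriented setting, and its extension by the reflection $\iota\colon z\mapsto -\bar z$ in the non-oriented setting, so that $\Mnon=\matH^2/\widetilde\MCG$ and $\calM=\matH^2/\MCG$. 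Since $p$ factors as $\Hnon\xrightarrow{\tilde p}\matH^2\to\matH^2/\widetilde\MCG$ and $\tilde p$ is injective, the fiber over $T$ satisfies
\begin{equation*}
c^{\rm no}(T)=|p^{-1}(T)|=|R\cap \widetilde\MCG\cdot z|,
\end{equation*}
for any lift $z\in\matH^2$ of $T$; equivalently, $c^{\rm no}(T)$ counts the number of $\widetilde\MCG$-translates of $R$ containing $z$. The oriented count $c(T)$ is handled identically using the bigger region $\tilde p(\calH)$ and the subgroup $PSL_2(\matZ)$.

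I would first prove finiteness of every fiber over a non-cuspidal $T$. Since $\widetilde\MCG$ acts properly discontinuously, the orbit $\widetilde\MCG\cdot z$ is discrete; truncating $R$ above a high horoball $\{\Im z>h\}$ leaves a compact remainder, which meets the discrete orbit in finitely many points. Inside the horoball, the orbit is (up to $\iota$) a horocyclic lattice of the form $\{z+n:n\in\matZ\}$ with uniformly bounded spacing, and $R$ restricted to the horoball is contained in the vertical strip $\Re z\in [-\tfrac{1}{\sqrt 3}\Im z,\tfrac12]$, so only finitely many translates land inside. This establishes finiteness.

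Next I would compute the explicit values in Fig.~\ref{c:fig} by enumerating the $\widetilde\MCG$-translates of $R$ that cover the standard fundamental domain of $\Mnon$ (resp.\ $\calM$). Using the generators $T\colon z\mapsto z+1$, $S\colon z\mapsto-1/z$ and $\iota$, one lists the translates meeting each region of the fundamental domain; their boundary arcs (images of the three boundary components of $R$) stratify the fundamental domain into finitely many open pieces, on each of which $c^{\rm no}$ is constant. Lower semi-continuity (and the convention that boundary strata take the minimum value) follows from the fact that the translates are closed sets. The values on the open strata can be verified by checking a single interior point per stratum, comparing with Fig.~\ref{c:fig}.

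Finally, properness is forced by the growth of the fiber near the cusp: if $z=x+iy$ has $y$ large, the horocyclic translates $\{z+n:n\in\matZ\}$ alone already furnish at least $\lfloor cy\rfloor$ points inside $R$ for some absolute constant $c>0$, hence $c^{\rm no}(T)\to\infty$ as $T$ approaches the cusp of $\Mnon$ (and similarly for $c$). I expect the main obstacle to be the combinatorial bookkeeping in the case analysis: one has to correctly identify the finitely many $\widetilde\MCG$-translates of $R$ whose interiors overlap each stratum in Fig.~\ref{c:fig}, tracking the $PSL_2(\matZ)$-images of the unit-circle arc, of the vertical ray $\Re z=\tfrac12$, and of the non-geodesic Euclidean ray, and verifying that the resulting cell decomposition and counts agree with the figure.
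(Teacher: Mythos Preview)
Your framework is exactly the paper's: identify $\Hnon$ with its image $R=\tilde p(\Hnon)\subset\matH^2$ and count orbit points in $R$. What you miss is the paper's one-line simplification. Because $R$ lies entirely above the arc $|z|=1$ (and, by an easy check on the left Euclidean ray, above every arc $|z-n|=1$), each point of $R$ already sits in some horizontal translate $D+n$ of the standard fundamental domain. Hence two points $z,z'\in R$ lie in the same $\widetilde\MCG$-orbit \emph{only if} they are related by an affine map $z\mapsto\pm z+n$; the inversion $S$ and all its conjugates never enter. The paper states this directly: for $z,z'\in R$, $\pi(z)=\pi(z')$ iff $\Im z'=\Im z$ and $\Re z'=\pm\Re z+n$. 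With this in hand the count is immediate: at height $y=\Im z$ one simply counts how many integers $n$ put $\Re z+n$ (and, in the non-oriented case, $-\Re z+n$) into the interval $[-y/\sqrt 3,\tfrac12]$. Finiteness, the explicit values in the figure, and properness all drop out of this single arithmetic count.

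Your plan instead treats $S$ as potentially relevant and anticipates tracking all $PSL_2(\matZ)$-images of the three boundary arcs. That would eventually succeed---you would discover that the $S$-translates of $R$ never re-enter $R$, so all that bookkeeping collapses---but it obscures the one observation that makes the proof a paragraph rather than a case analysis. I would reorganize: first prove $R\subset\bigcup_{n\in\matZ}(D+n)$ (a short check on the boundary), deduce the affine characterization of orbit-equivalence within $R$, and then read off the fibers by elementary integer counting.
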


\begin{proof}
We can identify $\Hnon$ with its image $H=\tilde p(\Hnon)$ and note that the restriction of the projection $\pi\colon \matH^2 \to \Mno$ to $H$ is finite-to-one. Indeed, for $z,z'\in H$, $\pi(z)=\pi(z')$ if and only if $\Im z' = \Im z$ and $\Re z' = \pm \Re z + n$ for some integer $n$. The number $c^{\rm no}(z)$ is the cardinality of the fiber $p^{-1}(z)$ and is easily shown to be as in Fig.~\ref{c:fig}-(left).

The oriented case is treated analogously: in that case $\tilde p(\calH)$ is the bigger grey zone in $\matH^2$ and $p(z)=p(z')$ if and only if $z'=z+n$ for some $n\in\matZ$. We get the picture in Fig.~\ref{c:fig}-(right).
\end{proof}

\begin{figure}
 \begin{center}
  \includegraphics[width = 14 cm]{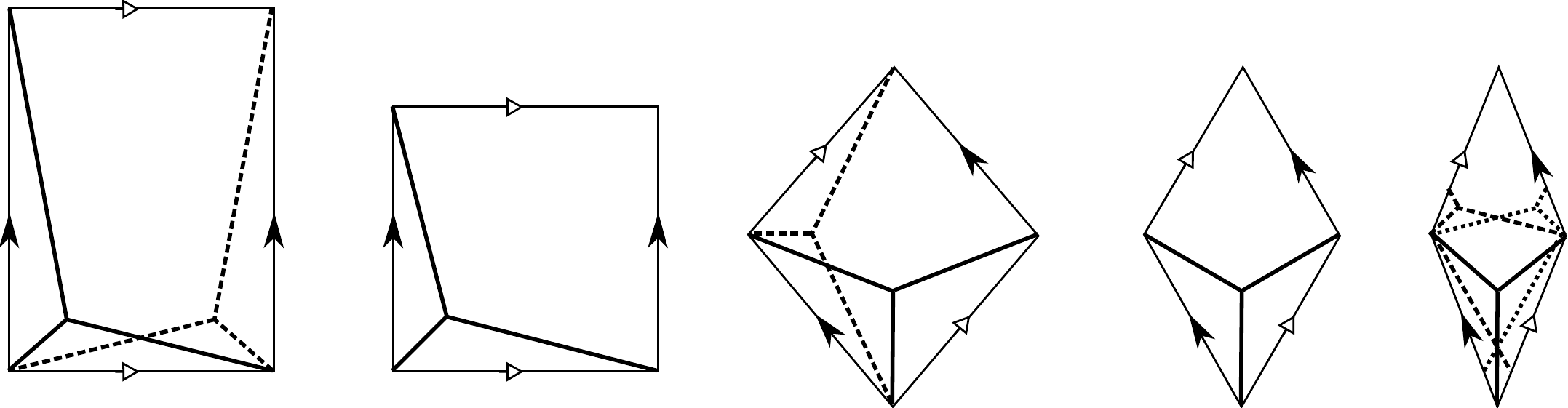}
 \end{center}
 \caption{The minimal spines in some oriented tori having additional symmetries. As we pass from rectangular to thin rhombic, we find 2, 1, 2, 1, and then 3 spines up to orientation-preserving isometries. They reduce to 1, 1, 2, 1, 2 up to all isometries, including orientation-reversing ones. Rectangles and rhombi that are thinner (ie longer) than the ones shown here have additional minimal spines that wind around the thin part.}  \label{rombi:fig}
\end{figure}

The subtler cases are those where $T$ has additional symmetries: in Fig.~\ref{rombi:fig} we show the minimal spines of $T$ as the metric varies from rectangular to square, fat rhombic, hexagonal, and finally thin rhombic. 

\subsection{Length.}
Let us define the \emph{length} of a union of curves on a flat torus
in the moduli space $\mathcal{M}$
as its normalized length, that is its one dimensional Hausdorff measure 
on a unit-area representative torus. 
For example, if you consider the explicit representative given by the parallelogram $\mathbb{R}$-generated by $1$ and $z\in\mathbb{C}$,
you have to divide lengths by the square root of the area 
of that parallelogram, that is $\sqrt{\Im(z)}$. 

Generalizing the well known concept for closed geodesics
on surfaces (see~\cite{Fm}), we define the
\emph{length spectrum of minimal spines} of a flat torus $T$ as
the set of lengths of its minimal spines.
Actually, we can define a more accurate spectrum, a set of
triples corresponding to the lengths of 
the sides of any spine on $T$, in decreasing order, that is
$$\left\lbrace \frac{(a,b,c)}{\sqrt{(b^2+c^2+bc)\Im(\tilde{p}(a,b,c))}}\,\Bigg|\,
(a,b,c)\in p^{-1}(T)\right\rbrace.$$
We have shown that both sets are finite, for any flat torus $T$.\\

In the following, we determine the spines of minimal length.

\begin{theorem}\label{mintheta}
Every unoriented flat torus has a unique spine 
of minimal length up to isometry.
In the oriented setting, instead, the same holds with the exception of the rectangular non square tori, for which there are exactly two.
\end{theorem}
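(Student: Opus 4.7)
The plan is to identify a minimal spine on the torus $T=\matR^2/\Lambda$ with a \emph{basis triangle} of the lattice $\Lambda$ -- a triple $(\ell_1,\ell_2,\ell_3)$ of lattice vectors summing to zero with any two generating $\Lambda$ -- whose three side lengths $|\ell_i|$ are determined by the hexagon side lengths $(a,b,c)=(r_1,r_2,r_3)$ via the Pompeiu-type identity $|\ell_i|^2=r_j^2+r_jr_k+r_k^2$ (law of cosines applied to the $2\pi/3$ Steiner configuration at a vertex). Combined with the hexagon area formula $\mathrm{area}(T)=\tfrac{\sqrt 3}{2}(r_1r_2+r_2r_3+r_3r_1)$, this yields the clean expression
\begin{equation*}
L^2 \,=\, \frac{(r_1+r_2+r_3)^2}{\mathrm{area}(T)} \,=\, \frac{|\ell_1|^2+|\ell_2|^2+|\ell_3|^2}{2\,\mathrm{area}(T)}+\sqrt 3
\end{equation*}
for the squared length of the spine on a unit-area torus. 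Minimising the spine length therefore reduces to minimising $|\ell_1|^2+|\ell_2|^2+|\ell_3|^2$ over basis triples.

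I would then perform this minimisation using the reduction theory of $2$-dimensional lattices. Writing $\Lambda_z = \matZ + z\matZ$ with $z$ in the fundamental domain $D=\{|z|\ge 1,\, |\Re z|\le\tfrac12\}$, and using the two possible sign choices in the basis triple $\{v_1,v_2,-(v_1+v_2)\}$ versus $\{v_1,-v_2,v_2-v_1\}$, one sees that the minimum of the sum of squared sides over choices of sign is $f(v_1,v_2):=2(|v_1|^2+|v_2|^2)-2|v_1\cdot v_2|$. Comparing this to its value under the elementary base change $v_2\mapsto v_2\pm v_1$ shows that no such change decreases $f$ as long as $|v_1\cdot v_2|\le|v_1|^2$; this is exactly the Minkowski-reduced regime, realised for $\Lambda_z$ by $(v_1,v_2)=(1,z)$ (with $|v_1|=1$, $|v_2|=|z|\ge 1$, $|v_1\cdot v_2|=|\Re z|\le\tfrac12$). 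The resulting minimum is $2+2|z|^2-2|\Re z|$, achieved by the triple $\{1,-z,z-1\}$ when $\Re z\ge 0$ and by $\{1,z,-1-z\}$ when $\Re z\le 0$: hence by a single triple if $\Re z\neq 0$, and by two distinct triples if $\Re z=0$.

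For uniqueness, observe that the two tied triples at $\Re z=0$ are related by complex conjugation on the lattice, which descends to the orientation-reversing isometry $z\mapsto -\bar z$ of $T$. In $\Mnon$ this is allowed, so the two triples yield a single spine and the minimiser is unique for every torus. In $\calM$ the reflection is not orientation-preserving, so the two triples correspond to genuinely distinct spines unless some orientation-preserving isometry of $T$ realises the same identification. This happens precisely at the square torus $z=i$, where the $\pi/2$-rotation $v\mapsto iv$ sends $\{1,-i,i-1\}$ to $\{i,1,-i-1\}=\{1,i,-1-i\}$; no other rectangular torus admits such an extra orientation-preserving symmetry. Hence in $\calM$ the minimiser is unique except on rectangular non-square tori, where there are exactly two. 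The main obstacle is the Minkowski reduction step: ruling out bases involving longer lattice vectors requires verifying that $f$ strictly decreases at any non-reduced basis, so that $(1,z)$ is the unique local -- hence global -- minimum up to the sign ambiguity responsible for the exceptional case $\Re z=0$.
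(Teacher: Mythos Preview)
Your approach is correct and genuinely different from the paper's. The paper's proof is very short because it leans entirely on the machinery already built: it observes that in the lift $\tilde p\colon\Hnon\to\matH^2$ all minimal spines over a given torus $T$ sit at the same height $\Im z$, so their (unnormalised) tripod lengths can be compared directly, and the one landing in the fundamental domain $D\cap\{\Re z\ge 0\}$ is visibly shortest. The oriented case is then a three-line case split on the sign of $\Re z_0$, with the $\pi/2$-rotation of the square torus appearing exactly as in your argument. No lattice reduction, no formula for $L^2$ is needed --- the comparison is purely positional in $\matH^2$.

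Your route instead recasts the problem as minimising $|\ell_1|^2+|\ell_2|^2+|\ell_3|^2$ over basis triples of the lattice and invokes Minkowski reduction. This is a nice and more self-contained viewpoint: it produces the closed formula $L^2=\frac{|\ell_1|^2+|\ell_2|^2+|\ell_3|^2}{2\,\mathrm{area}}+\sqrt 3$ (which the paper does not derive), and it explains conceptually why the minimiser is tied to the shortest three primitive vectors of $\Lambda$. The cost is that you must actually carry out the reduction argument you flag as the main obstacle: checking stability under $v_2\mapsto v_2\pm v_1$ is not enough --- you also need stability under the swap $v_1\leftrightarrow v_2$, and you need strict inequality away from the reduced basis to conclude uniqueness (not merely local minimality). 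A second point to tidy up is the correspondence between minimal spines and basis triples: not every basis triple arises from a spine (one needs all angles of the associated triangle to be $<2\pi/3$, so that the Steiner point is interior and $r_1,r_2,r_3>0$), so you should verify that the reduced triple always satisfies this, and that the equivalence relation on triples (negation, cyclic permutation) matches isometry of spines. None of this is hard, but it is where the work lies in your approach, whereas the paper sidesteps it entirely via the picture in $\matH^2$.
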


\begin{proof}
Fix a torus $T\in\mathcal{M}^{\rm{no}}$. 
In the proof of Theorem~\ref{fibra}, we observed that all points of 
$\pi^{-1}(T)\cap H\subset \mathbb{H}^2$ have the same imaginary part,
so the lengths of the tripods we found with the map $\tilde{p}$
can be compared without need of normalization.
Clearly, only one is the shortest: that corresponding to $z_0\in D\cap \{\Re(z)\geq 0\}$.

In the oriented case, consider the representative
$z_0$ of $T\in\mathcal{M}$ with 
%$z_0\in D-(\partial D \cap\{ Re(z)<0\})$.
$z_0\in D \setminus \{z\in \partial D \, | \, \Re(z)<0\}$.
There are three cases:
\begin{itemize}
\item if $\Re(z_0)>0$, the unique shortest spine is
that associated to $z_0$ itself;
\item if $\Re(z_0)<0$, it is associated to $z_0+1$;
\item if $\Re(z_0)=0$, that is $T$ is a rectangular torus, both spines associated to
$z_0$ and $z_0+1$ are of minimal length.
These two spines are the same only for the square torus, indeed a rotation of $\frac{\pi}{2}$  sends one to the other.
\end{itemize}
These evident assertions can be verified by trigonometry, or by computing the lengths through the formula showed in the following .
\end{proof}

The explicit expression on $\tilde p(\mathcal{H})\subset\mathbb{H}^2$ for the \emph{length function} $L\colon \mathcal{H}\to (0,+\infty)$,
which to a spine assigns its length,
can be simply computed by the law of sines:
%Denoting with $l$ the modulus of ${z}$ and with $\alpha$ its argument,
%For $z\in \tilde p(\mathcal{H})$, the law of sines gives
%$$L:H\to\mathbb{R}^+$$ 
%$$L(z) =\sqrt{\frac{1+l^2 - l\cos\alpha+\sqrt{3}l\sin\alpha}{l\sin\alpha}}\,,$$
%where we divided by $\sqrt{l\sin\alpha}$ to get a normalized quantity.
%$$L(z) = \frac{1}{\sqrt{Im(z)}}\sqrt{1+|z|^2-Re(z)+\sqrt{3}Im(z)}.$$
$$L(z) =\sqrt{ \frac{1+|z|^2-\Re(z)+\sqrt{3}\Im(z)}{\Im(z)}}.$$\\

The formula for the \emph{spine systole}
$\Spy\colon \mathcal{M}\to (0,+\infty)$, a generalized systole
which assigns to a flat torus the 
length of its shortest spines is 
%Given a flat torus $T\in\mathcal{M}$, represent it by $z\in D$.
%Consider the parallelogram $P$, $\mathbb{R}$-generated by $1,z\in\mathbb{C}$. 
%Among the Steiner configurations related to the triangles $(0,1,z)$ and $(0,1,z+1)$,
%we take the shortest:
%$$
%L_m: \mathcal{M}\to\mathbb{R}^+$$
%Then,
%$$\mathrm{Sys}(z)=\sqrt{\frac{1+l^2-l\vert\cos\alpha\vert+\sqrt{3}l\sin\alpha}{l\sin\alpha}}\,.$$
$$\Spy(z)=\sqrt{ \frac{1+|z|^2-|\Re(z)|+\sqrt{3}\Im(z)}{\Im(z)} }. $$

Both functions
are proper and almost everywhere smooth. They extend to continuous functions
$L:\overline{\mathcal{H}}\to (0,+\infty] $ and
$\Spy:\overline{\mathcal{M}}\to (0,+\infty ]$.

For every continuous function $f :X\to\mathbb{R}$ on 
a topological $2$-manifold $X$, recall that
a point $p\in X$ is said
to be \emph{regular}  if there is a (topological) chart around
$p=0$ in which $f(x,y)=\textrm{const}+x$. 
Otherwise, it is \emph{critical}. 
A critical point
$p\in X$ is said to be \emph{non degenerate} if in a local (topological) chart around
$p=0$ we have either $f(x,y)=\textrm{const}-x^2+y^2$,
or $f(x,y)=\textrm{const}\pm(x^2+y^2)$.
The non degenerate
critical points are necessarily isolated. The function $f$ is \emph{topologically Morse} if it is proper and all
critical points are non degenerate. 
The classical Morse theory works also in the topological category.

\begin{remark}\rm{
The functions $L\colon  \mathcal{H}\to (0,+\infty)$ and $\Spy \colon \mathcal{M}\to (0,+\infty)$
are topologically Morse.
For both functions, the set of
sublevel $k$ is empty if $k< \sqrt[4]{3}\sqrt{2}$,
a point if $k=\sqrt[4]{3}\sqrt{2}$ and a topological disc otherwise.
Hence, the unique critical point is a minimum.
Therefore,
among all minimal spines on all flat tori, exactly one is the shortest. As expected,
it is the equilateral spine on the hexagonal torus. Its length is $\sqrt[4]{3}\sqrt{2}\approx 1.86$.
}\end{remark}

%%%%%%%%%%%%%%%%%%%%%%%%%%%%%%%%%%%%%%%%%%%%%%%%%%%%%%%%%%%%%%%%%%%%

\section{Hyperbolic surfaces}
Let now $S_g$ be a closed orientable surface of genus $g\geq 2$. The oriented hyperbolic metrics on $S_g$ form the \emph{moduli space} $\calM(S_g)$ and the minimum length of a spine furnishes the \emph{spine systole}
$$\Spy \colon \calM(S_g) \longrightarrow \matR.$$
We now prove some facts on the function $\Spy$.

\begin{theorem}
The function $\Spy$ is continuous and proper. Its global minima are precisely the extremal surfaces.
\end{theorem}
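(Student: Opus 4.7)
The plan is to establish the three claims of the theorem---continuity, properness, and the characterization of global minima---separately.

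For continuity I would prove upper and lower semi-continuity in turn. Upper semi-continuity is direct: given $S\in\calM(S_g)$ with a minimal spine $\Gamma$ of length $\Spy(S)$, any small perturbation of the hyperbolic metric carries $\Gamma$ to a spine on a nearby surface of nearly the same length, so $\limsup \Spy\le \Spy(S)$. For lower semi-continuity I would mimic Steps~1, 3, and~4 of the proof of Theorem~\ref{existencein2d}: given $S_n\to S$ with $\liminf \Spy(S_n)<\infty$, pick minimal spines $\Gamma_n\subset S_n$ realizing $\Spy(S_n)$, extract a Hausdorff limit $\Gi\subset S$ via Blaschke's theorem, apply Golab to obtain $\H^1(\Gi)\le\liminf \Spy(S_n)$, note that $\Gi$ meets every essential closed curve in $S$ because each $\Gamma_n$ does, and extract a spine of $S$ inside $\Gi$ by the Zorn-type argument of Step~4.

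For properness I would use Mumford's compactness theorem: if a sequence $S_n$ leaves every compact subset of $\calM(S_g)$, its systole $\epsilon_n$ tends to $0$, and the collar lemma supplies embedded annular neighborhoods $C_n$ of the shortest geodesics of widths $w_n\to+\infty$. Given any spine $\Gamma\subset S_n$, since $S_n\setminus\Gamma$ is a disc, no component of $C_n\setminus\Gamma$ can contain a loop freely homotopic to the core of $C_n$; a standard topological argument on the annulus then forces $\Gamma\cap C_n$ to contain a properly embedded arc joining the two boundary components of $C_n$. Such an arc has length at least $2w_n$, so $\Spy(S_n)\ge 2w_n\to+\infty$.

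For the characterization of minima I would start with any minimal spine $\Gamma\subset S$ realizing $\Spy(S)$. By Remark~\ref{numberofvertices} the graph $\Gamma$ has $4g-2$ trivalent vertices and $6g-3$ geodesic edges meeting at angles $\frac{2\pi}{3}$. Cutting $S$ along $\Gamma$ and lifting to the universal cover produces a hyperbolic geodesic $(12g-6)$-gon $\widetilde D\subset\matH^2$ whose interior angles are all $\frac{2\pi}{3}$, whose area equals $4\pi(g-1)$ by Gauss-Bonnet, and whose perimeter equals $2\Spy(S)$. The statement then reduces to the following polygon isoperimetric claim, which I expect to be the main obstacle: \emph{among all hyperbolic $(12g-6)$-gons in $\matH^2$ with all interior angles equal to $\frac{2\pi}{3}$, the regular polygon uniquely minimizes the perimeter}. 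I would attempt to prove this by a Fejes-T\'oth-style symmetrization (replacing two adjacent unequal sides by an equal pair at the same angles so as to strictly decrease the total length) or by a direct variational analysis on the finite-dimensional moduli space of such polygons, where the regular polygon is already a critical point by symmetry. Once this is in hand, the minima of $\Spy$ correspond exactly to those $S$ admitting a minimal spine whose fundamental polygon is regular; since the regular $(12g-6)$-gon with angles $\frac{2\pi}{3}$ is precisely Bavard's fundamental polygon for the genus-$g$ extremal surface~\cite{Ba}, these are exactly the extremal surfaces.
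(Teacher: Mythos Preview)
Your plan coincides with the paper's proof in all three parts: continuity, properness via Mumford plus the Collar Lemma, and the reduction of the global-minimum problem to a fixed-angles isoperimetric inequality for hyperbolic $(12g-6)$-gons followed by the identification with Bavard's extremal surfaces. The one substantive difference is that the paper does not prove the polygon inequality by symmetrization or a variational argument; it simply quotes a theorem of Porti \cite{Porti}, which says that among all hyperbolic $n$-gons with prescribed interior angles the one of least perimeter is the unique polygon admitting an inscribed circle---hence, for equal angles, the regular polygon. This disposes of what you flag as the ``main obstacle'' in one citation. Your semicontinuity argument for continuity is in fact more careful than the paper's one-line claim that ``the length of spines varies continuously in the metric''; the lower-semicontinuity half genuinely needs an argument like yours, and your Blaschke--Golab--Zorn route is sound.
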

\begin{proof}
The function $\Spy$ is clearly continuous because the length of spines varies continuously in the metric. We now prove properness as an easy consequence of the Collar Lemma \cite{Fm}. 

By Mumford's compactness theorem the subset $\calM_\varepsilon(S_g) \subset \calM(S_g)$ of all hyperbolic metrics with (closed geodesic) systole $\geq \varepsilon$ is compact for all $\varepsilon>0$, and the Collar Lemma says that for sufficiently small $\varepsilon>0$ a hyperbolic surface $S\in\calM(S_g) \setminus \calM_\varepsilon(S_g)$ has a simple closed curve $\gamma$ of length $<\varepsilon $ with a collar of diameter $C(\varepsilon)$, for some function $C$ such that $C(\varepsilon)\to \infty$ as $\varepsilon \to 0$. Every spine $\Gamma$ of $S_g$ must intersect $\gamma$ and cross the collar, hence $L(\Gamma) \geq C(\varepsilon)$ and therefore $\Spy$ is proper.

We now determine the global mimima of $\Spy$. Let $\Gamma$ be a spine in $S\in\calM(S_g)$ of minimal length. The spine has $6g-3$ edges. As in the Euclidean case, by cutting $S$ along $\Gamma$ we get a hyperbolic polygon $P$ with $12g-6$ edges and all interior angles $\frac {2\pi}3$. The length of $\Gamma$ is half the perimeter of $P$. 

Porti has shown \cite{Porti} that, among all hyperbolic $n$-gons with fixed interior angles, the one with smaller perimeter is the unique one that has an inscribed circle. Therefore among all polygons $P$ with angles $\frac{2\pi}3$ the one that minimizes the perimeter is precisely the regular one $R$, that is the one whose sides all have the same length. We deduce that the global minima for $\Spy$ are the hyperbolic surfaces that have $R$ as a fundamental domain, and these are precisely the extremal surfaces, as proved by Bavard \cite{Ba}.
\end{proof}

It would be interesting to investigate the function $\Spy$ and check for instance whether it is a topological Morse function, see \cite{Gend}.

In the flat case we have shown that the number of minimal spines is finite.
In the hyperbolic setting, we do not know if the same is true.
To conclude the section, we prove a partial result:

\begin{theorem}\label{finminspin}
The number of minimal spines with bounded length of a
closed hyperbolic surface $S$ is finite. 
\end{theorem}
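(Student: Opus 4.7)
The plan is to combine two ingredients: a \emph{uniqueness principle} saying that each isotopy class of embedded trivalent spines in $S$ contains at most one minimal spine, and a \emph{local-finiteness} statement saying that only finitely many isotopy classes can accumulate at any Hausdorff limit of minimal spines. Together these force the set $M_L$ of minimal spines of length at most $L$ to be finite.

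Every $\Gamma\in M_L$ has, by Remark~\ref{numberofvertices}, exactly $4g-2$ trivalent vertices and $6g-3$ geodesic edges, each of length at most $L$. I proceed by contradiction: assume $\{\Gamma_n\}\subset M_L$ is an infinite sequence of pairwise distinct minimal spines. By Blaschke's theorem \cite[Theorem~4.4.15]{AT} a subsequence converges in the Hausdorff distance to a compact set $\Gamma^*\subset S$. Passing to further subsequences, I may assume that the ribbon-graph combinatorial type of $\Gamma_n$ is a fixed $C$, that the vertex positions converge $v_i^n\to v_i^*\in S$, and that the edge lengths converge $\ell_j^n\to\ell_j^*\in[0,L]$.

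To establish uniqueness, I lift vertex positions to $(\tilde v_1,\dots,\tilde v_{4g-2})\in(\mathbb{H}^2)^{4g-2}$. Fixing an isotopy class $[\Gamma_0]$ encodes the homotopy class of each edge $e$ by a deck transformation $g_e\in\pi_1(S)$, so that the length functional becomes
$$L(\Gamma)=\sum_{e}d_{\mathbb{H}^2}\!\bigl(\tilde v_{s(e)},\,g_e\,\tilde v_{t(e)}\bigr).$$
Each summand is strictly convex in $(\tilde v_1,\dots,\tilde v_{4g-2})$ by Lemma~\ref{convexity}, and so is the whole sum. A strictly convex function on $(\mathbb{H}^2)^{4g-2}$ has at most one critical point; since every minimal spine in $[\Gamma_0]$ is such a critical point (it satisfies the $\frac{2\pi}{3}$-angle condition at each trivalent vertex), at most one minimal spine lies in $[\Gamma_0]$.

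For the local-finiteness step, note that once $n$ is large enough that the vertices of each \emph{cluster} $\{v_i^n : v_i^*=v_{i_0}^*\}$ lie inside a geodesically convex ball around $v_{i_0}^*$ of radius less than the injectivity radius of $S$, the homotopy classes of the inter-cluster geodesic edges (those with positive limiting length) stabilize, and the intra-cluster short edges, being arcs embedded in a small disc with prescribed boundary data, admit only finitely many isotopy types for fixed ribbon combinatorics $C$. Hence the isotopy class of $\Gamma_n$ takes only finitely many values as $n$ varies. Combined with the uniqueness principle, only finitely many distinct minimal spines can Hausdorff-accumulate at $\Gamma^*$, a contradiction. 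The main obstacle is precisely this last step: the local intra-cluster subgraphs must be shown to admit only finitely many isotopy types inside their small discs, and the subtle case occurs when a cluster's induced subgraph contains cycles, where one must exploit the global constraint that $\Gamma_n$ is a spine (so that the Euler-characteristic and cell-decomposition requirements severely restrict the possibilities) to close the argument.
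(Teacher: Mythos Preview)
Your uniqueness principle is sound and is indeed the heart of the matter: lifting vertex positions to $(\mathbb{H}^2)^{4g-2}$ and invoking the strict convexity of Lemma~\ref{convexity}, the length functional restricted to a fixed isotopy class of trivalent spines is strictly convex, hence has at most one critical point. This is essentially the convexity argument of Theorem~\ref{minima} recast in slightly different language.

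The genuine gap is your local-finiteness step, and you flag it yourself in the final sentence. You assert that intra-cluster subgraphs admit only finitely many isotopy types, but when several vertices collide and short edges form cycles inside a small ball you have not actually established this; invoking unspecified ``global constraints that $\Gamma_n$ is a spine'' does not close the argument. This is precisely the kind of degeneration analysis that can be delicate, and your proposal leaves it open.

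The paper's proof is shorter and sidesteps the issue entirely. It first observes (referring back to the proof of Theorem~\ref{existencein2d}) that the set $M_L$ is compact in the Hausdorff topology, so any accumulation point $\Gamma$ of a sequence $\Gamma_n\in M_L$ is itself a minimal spine. For large $n$ one then builds the interpolation $\Gamma_n^\lambda$ exactly as in Theorem~\ref{minima} and sets $F_n(\lambda)=L(\Gamma_n^\lambda)$. Now \emph{both} endpoints $\Gamma=\Gamma_n^0$ and $\Gamma_n=\Gamma_n^1$ are stationary for the length functional, so $F_n'(0)=F_n'(1)=0$; but $F_n$ is strictly convex by Lemma~\ref{convexity}, a contradiction. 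Hence $M_L$ is discrete as well as compact, so finite.

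The simplification you are missing is that once the limit $\Gamma$ is known to be a minimal spine with the same combinatorics, no clustering can occur: each vertex of $\Gamma_n$ is near a unique vertex of $\Gamma$, the edge correspondence is forced, and your entire cluster analysis becomes unnecessary. Your two-step decomposition (uniqueness per isotopy class, then count the classes) is replaced by the single observation that a strictly convex function cannot have two distinct critical points---applied directly to the interpolation between $\Gamma_n$ and its limit.
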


\begin{proof}
In the proof of Theorem~\ref{existencein2d}, it results clear that 
every set of minimal spines of equibounded length is compact.
We now prove that for hyperbolic surfaces every such set
is discrete, hence finite.
By contradiction, let $\Gamma_n$ be a sequence of  distinct minimal spines of $S$ of
equibounded length, converging in the Hausdorff distance to the minimal spine $\Gamma$.
Moreover $L(\Gamma_n)\rightarrow L(\Gamma)$.
For every $\lambda\in [0,1]$ and every $n$ big enough, we construct, exactly as in the
proof of Theorem~\ref{minima},
the (not necessarily minimal) spine $\Gamma_n^\lambda$ and 
continuous function $F_n(\lambda)=L(\Gamma_n^\lambda)$.
The surface $S$ is hyperbolic, therefore, by Lemma~\ref{convexity},
$F_n(\lambda)$ is strictly convex.
Both $\Gamma$ and $\Gamma_n$ are minimal spines,
that is stationary points of the length functional, hence,
$F'_n(0)=F'_n(1)=0$ and $F_n$ is constant in $\lambda$:
a contradiction.
\end{proof}

\end{document}